\documentclass{amsart}

\usepackage{enumitem,amsrefs,xcolor,amsthm}

\newtheorem{theorem}{Theorem}[section]
\newtheorem{lemma}[theorem]{Lemma}
\newtheorem{proposition}[theorem]{Proposition}
\newtheorem{corollary}[theorem]{Corollary}
\newtheorem{conjecture}[theorem]{Conjecture}

\theoremstyle{definition}

\newtheorem{example}[theorem]{Example}

\numberwithin{equation}{section}


\newcommand{\R}{\mathbb{R}}

\newcommand{\N}{\mathbb{N}}

\newcommand{\hA}{\mathcal{A}}
\newcommand{\hB}{\mathcal{B}}

\newcommand{\hH}{\mathcal{H}}

\newcommand{\sbsts}[2]{\binom{[#1]}{#2}}

\newcommand{\ins}{\mathrm{ins}}
\DeclareMathOperator{\myspan}{span}
\DeclareMathOperator{\GL}{GL}

\DeclareMathOperator{\rank}{rank}

\makeatletter
\newcommand{\expow}{\@ifnextchar^\@extp{\@extp^{\,}}}
\def\@extp^#1{\mathop{\bigwedge\nolimits^{\!#1}}}
\makeatother

\newcommand{\innprod}{\,\llcorner\,}
\newcommand{\pown}{\mathcal{P}(n)}
\newcommand{\Ffull}{F_{\textrm{full}}}

\newcommand{\inversions}[2]{\rho(#1,#2)}

\definecolor{fixed}{rgb}{0,0,0}
\definecolor{more}{rgb}{0,0,0}

\begin{document}

\title[Combinatorics in the exterior algebra]{Combinatorics in the exterior algebra and the Bollob\'as Two Families Theorem}

\author{Alex Scott}
\address{Mathematical Institute, University of Oxford, Andrew Wiles Building, Radcliffe Observatory Quarter, Woodstock Road, Oxford, OX2 6GG, UK}
\email{scott@maths.ox.ac.uk}
\thanks{Alex Scott was supported by a Leverhulme Trust Research Fellowship.}

\author{Elizabeth Wilmer}
\address{Department of Mathematics, Oberlin College, Oberlin, OH, 44074, USA}
\email{ewilmer@oberlin.edu}
\thanks{Elizabeth Wilmer gratefully acknowledges the hospitality of Merton College
and the Mathematical Institute of the University of Oxford while this work
was being completed.}

\subjclass[2010]{Primary 05D05; Secondary 15A75, 14N20.}

\date{\today}

\begin{abstract}
We investigate the combinatorial structure of subspaces of the exterior algebra of a finite-dimensional real vector space, working in parallel with the extremal combinatorics of hypergraphs. \textcolor{black}{Using initial monomials, projections of the underlying vector space onto subspaces, and the interior product, we find analogs of local and global LYM inequalities, the Erd\H{o}s-Ko-Rado theorem, and the Ahlswede-Khachatrian bound for $t$-intersecting hypergraphs.}

Using these tools, we prove a new extension of the Two Families Theorem of Bollob\'{a}s\textcolor{black}{, giving a weighted bound for subspace configurations satisfying a skew cross-intersection condition. We also verify a recent conjecture of Gerbner, Keszegh, Methuku, Abhishek, Nagy, Patk\'{o}s,
Tompkins, and Xiao on pairs of set systems satisfying both an intersection and a cross-intersection condition.}
\end{abstract}

\maketitle

%
%

\section{Introduction} 
\label{sec:intro}

For several decades there have been useful links between exterior algebra and combinatorics.  Constructions exploiting the wedge product have been used in combinatorics to study intersections in hypergraphs, saturation problems, and simplicial complexes; the exterior algebra approach~\cites{Lovasz77,F82,Alon85,AK85,K84} to Bollob\'{a}s's celebrated  Two Families Theorem~\cite{Bollobas65} is a highlight, as is Kalai's method of algebraic shifting~\cites{KalaiAlgShift,Kalai84a,Kalai86}.  Conversely, combinatorial results can be used to elucidate algebraic structures: a central example is the Kruskal-Katona theorem~\cites{Kruskal,Katona,Schutz}, which characterizes $f$-vectors of simplicial complexes and Hilbert series in certain algebraic structures.   

In this paper, we study the combinatorics of linear subspaces of the exterior algebra of a finite dimensional real vector space.  We prove new results both in the exterior algebra and in extremal set theory.  As an application of our results, we prove a new extension of the Two Families Theorem of Bollob\'as. \textcolor{black}{We also affirmatively resolve a recent conjecture of  Gerbner, Keszegh, Methuku, Abhishek, Nagy, Patk\'{o}s,
Tompkins, and Xiao~\cite{GKMNPTX}.}

The paper is organized as follows.
In Section~\ref{sec:setup}, we first recall the basic correspondence between uniform hypergraphs and homogeneous subspaces of the exterior algebra over $\R^n$ (which depends on both a basis for $\R^n$ and a term ordering of the corresponding monomials in~$\expow^r \R^n$).
We then use the correspondence to prove results about subspaces of the exterior algebra, \textcolor{black}{developing subspace analogues for several intersection conditions on hypergraphs.}
For example, we determine the maximum dimension of a subspace of $V=\expow^r\R^n$ in which every pair of elements has wedge product 0,
and the maximum of $(\dim U)(\dim W)$ over subspaces $U$ of $\expow^r V$ and $W$ of $\expow^s V$ that mutually annihilate. We also give exterior analogs for local LYM inequalities and use them to prove a global LYM inequality for graded ideals in the exterior algebra.

Section~\ref{sec:projections} considers projections and liftings in the exterior algebra.
We prove dimensional fraction bounds for projections and liftings of homogeneous subspaces of the exterior algebra (Lemmas~\ref{prop:extproj} and~\ref{prop:increaseboth}). In fact the exterior algebra setting allows us more freedom than the combinatorial setting, since a generic choice of basis ensures that images under ``random'' projections have constant dimension (Corollaries~\ref{cor:genericproj} and~\ref{cor:genericboth}). In Section~\ref{sec:twofam}, we prove Theorem \ref{thm:subspaces} and Corollary \ref{cor:combo}, which are new extensions of the Bollob\'{a}s Two Families Theorem for both subspaces and set systems. The proof relies on both the exterior local LYM inequality and our bounds on generic projections. 

\textcolor{black}{In Section~\ref{sec:addapp} we show that the size of pairs of families satisfying both the Two Family hypotheses and an intersection condition on the first family is bounded by the Ahlswede-Khachatrian bound on the size of $t$-intersecting families, as conjectured by Gerbner, Keszegh, Methuku, Abhishek, Nagy, Patk\'{o}s,
Tompkins, and Xiao~\cite{GKMNPTX}. Finally, in Section~\ref{sec:badexamples} we collect some limiting examples and propose a few questions.}

We work over the reals throughout, although our arguments would go through over the complex numbers, or any field of characteristic 0.
%
%

\section{Exterior algebra and hypergraphs}\label{sec:setup}

 \begin{quote}
It happens to be rather easy to express the size of an $r$-graph in terms of exterior powers, but to make use of this expression is a rather different matter. \hfill \cite{WhiteBook}*{p.~117}
\end{quote}
\medskip

We begin this section by setting up definitions and notation, and defining the connection between hypergraphs and subspaces of the exterior algebra.  We then use this connection to prove results about self-annihilating subspaces and pairs of mutually annihilating subspaces of the exterior algebra, and on the change in dimensional fraction when a subspace is wedged with the underlying space or contracted with the dual space.

\subsection{Monomial subspaces and initial hypergraphs}\label{sec:exterior}

Given an integer $n >0$, we write $[n] = \{1, \dots, n\}$. For $0 \leq r \leq n$,  we write
\[
\sbsts{n}{r} = \{A \subseteq [n]\,:\, |A| = r \}
\]
for the collection of $r$-element subsets of $[n]$ \textcolor{black}{and $\pown = \bigcup_{r=0}^n \sbsts{n}{r}$ for the collection of all subsets of $[n]$.   A \textit{hypergraph $\mathcal A$ with ground set $[n]$} is a subset of $\pown$.} We call $\mathcal{A}$ {\em $r$-uniform}
when $\mathcal A\subseteq \sbsts{n}{r}$. 

For exterior algebra we largely follow the notation and terminology of~\cite{Bourbaki}, \cite{Bourbaki89}, \cite{FultonHarris}*{Appendix B3}, \cite{AHH97}, and~\cite{HH11}*{Chapter 5}, but we emphasize the dependence on a basis. The results included in Section~\ref{sec:setup} do not depend on the basis; however, some results in Section~\ref{sec:projections} will require a generic basis. 

Let $V=\R^n$, viewed as column vectors, and write 
\[
\expow V = \bigoplus_{r=0}^n \expow^r V 
\]
for the standard grading of the exterior algebra of $V$. \textcolor{black}{We also write $\expow^{<t} V = \bigoplus_{r=0}^{t-1} \expow^r V$}.

\textcolor{black}{We call $v \in \expow^rV$ an \textit{$r$-vector} and say $v$ has \textit{degree $r$}. When there exist $v_1,\dots, v_r \in V$ such that $v = v_1 \wedge \dots \wedge v_r$, we call $v$ \textit{decomposable}, or an \textit{$r$-blade}.}

Let $E=\{e_1,\dots,e_n\}$ be the standard basis for $V$. 
For an arbitrary element $F \in \GL_n(\R)$, we denote the columns and the entries of the $E$-matrix for $F$ by
\[
F = (f_1 | \dots |f_n) = (f_{ij}).
\] 
Often we will identity $F$ with the ordered basis $\{f_1, \dots, f_n\}$ formed by the columns of its standard matrix. For $A \in \sbsts{n}{r}$, write $f_A = \bigwedge_{a \in A} f_a \in \expow^r V$, where the elements of $A$ are listed in increasing order. 
For $A,B \subseteq [n]$, we have \textcolor{fixed}{
\begin{equation}\label{eq:extmonmult}
f_A \wedge f_B = \begin{cases}
0 & A \cap B \not= \emptyset,\\
(-1)^{\inversions{A}{B}} f_{A \cup B} & A \cap B = \emptyset,
\end{cases}
\end{equation}
where we define
\begin{equation}\label{eq:inversions}
\inversions{A}{B} = 
|\{(a,b) \in A \times B\,:\, a>b\}|
\end{equation}
to be  the number of inversions between disjoint sets $A,B \in \N$. (The resulting sign is the same as the sign of the permutation sorting the concatenation of the sorted listings of $A$ and $B$.)}

The set $F_r=\left\{f_A\, : \, A \in \sbsts{n}{r}\right\}$ is a basis for $\expow^r V$ and $\dim \expow^r V = \binom{n}{r}$. \textcolor{black}{We  write $\Ffull = \bigcup_{r=0}^n F_r$, so that $\Ffull$ is a basis for $\expow V$, and  $\dim \expow V = 2^n$.}  For a hypergraph $\hA \subseteq \textcolor{black}{\pown}$, write $F(\hA)=\myspan\{f_A\,:\, A \in \hA \}$. Note that $\dim F(\hA) = |\hA|$ and that $f_A$ and $F(\hA)$ both depend on our choice of $F$.

We call a subspace $W \subseteq \expow V$ \textit{monomial with respect to $F$} when $W= F(\hA)$ for some hypergraph $\hA \subseteq \textcolor{black}{\pown}$. \textcolor{black}{Note that $\hA \mapsto F(\hA)$ forms a bijection between  hypergraphs with ground set $[n]$} and subspaces of $\expow V$ monomial with respect to the fixed basis $F$; see Lemma~\ref{lem:inithyp}.  

\textcolor{black}{Given a non-zero $w \in \expow V$, define its 
\textit{initial set $\ins_F(w) \in \pown$ with respect to $F$}  as follows: expand $w$ in the basis $\Ffull$ as $w = \sum_{A \in \pown} m_A f_A$. Let 
\[
\ins(w) = \max\left\{A\in\pown\,:\, m_A \not= 0\right\}.
\]
where the maximum is taken with respect to the following ordering of $\pown$: first, sort from largest cardinality to smallest. Then use \textit{reverse colex order} within $\sbsts{n}{r}$. More formally, for $A,B \in \sbsts{n}{r}$, we say $A>B$ exactly when $|A|>|B|$, or $|A|=|B|$ and $\max(A \Delta B) \in B$.
For example, $\ins(f_1 \wedge f_4 \wedge f_5 + f_2 \wedge f_3 \wedge f_5+ f_1 \wedge f_2) = \{2,3,5\}$.} See, for example,~\cite{WhiteBook}*{Chapter 5} or \cite{Anderson2002}*{Chapter 7} for combinatorial treatments of colex order.  The corresponding ordering on monomials is sometimes called \textit{reverse lex} in the algebraic combinatorics literature, see for example~\cite{HH11}*{Section 2.1.2}.

The key property of our ordering of \textcolor{black}{$\pown$ is that it is a \textit{term order}, that is,} 
\begin{equation}\label{eq:colex}
A > B \textrm{ if and only if } A \cup C > B \cup C, \textrm{ whenever } A \cap C = B \cap C = \emptyset.
\end{equation}
It follows immediately that for $C \subseteq [n]$ and $w \in \expow V$ satisfying $\ins(w) \cap C = \emptyset$, 
\begin{equation}\label{eq:initialset}
\ins (w \wedge f_C) = \ins(w) \cup C. 
\end{equation}

We define the \textit{initial hypergraph $\hH_F(W) \subseteq \textcolor{black}{\pown}$ with respect to $F$} of a subspace $W \subseteq \expow V$ by
\[
\hH_F(W) = \{ \ins(w)\,:\, w\in W, w\not=0\}.
\]

Let us note some basic facts about the correspondence between hypergraphs and subspaces.

\begin{lemma}\label{lem:inithyp} 
Let $V=\R^n$ and $F \in \GL_n(\R)$. Then
\begin{enumerate}[label={\textup{(\roman*)}}] 
\item \label{it:dim} $\dim W = |\hH_F(W)|$ for any subspace $W \subseteq \expow V$.
\item \label{it:monomialinverse} $F(\hH_F(W))=W$ for $W$ monomial with respect to $F$.
\item \label{it:inverse} $\hH_F(F(\hA)) = \hA$ for any $\hA \subseteq \textcolor{black}{\pown}$. 
\end{enumerate}
\end{lemma}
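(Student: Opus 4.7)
The plan is to establish the three parts simultaneously by producing, for any subspace $W\subseteq \expow^r V$, a canonical ``echelon'' basis with respect to reverse colex order on the monomial basis $F_r$: that is, a basis $w_1,\dots,w_k$ of $W$ whose initial sets $A_i:=\ins(w_i)$ are pairwise distinct, with the coefficient of $f_{A_i}$ in $w_j$ vanishing whenever $j\neq i$. Everything else follows quickly.

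To construct such a basis, I would run Gaussian elimination with respect to reverse colex on any starting basis of $W$: iteratively select a basis vector of maximal remaining initial set, call it $w$, and use it to cancel the $f_{\ins(w)}$-coefficient in each of the other current basis vectors. Every such operation preserves $W$, and by choice of pivot cannot create a coefficient at any set strictly larger than $\ins(w)$. Iterating yields the desired echelon basis. This is the only step that requires any real argument, and it is routine linear algebra over a totally ordered monomial basis; I expect no substantive obstacle.

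Given the echelon basis, part~(i) follows: for any non-zero $w=\sum c_i w_i\in W$, if $i^*$ maximises $A_i$ over $\{i:c_i\neq 0\}$, then the coefficient of $f_{A_{i^*}}$ in $w$ equals $c_{i^*}\neq 0$ and no strictly larger monomial appears, so $\ins(w)=A_{i^*}\in\{A_1,\dots,A_k\}$. Combined with $\ins(w_i)=A_i\in\hH_F(W)$ for each $i$, this gives $\hH_F(W)=\{A_1,\dots,A_k\}$ and hence $|\hH_F(W)|=k=\dim W$.

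For part~(iii), the spanning set $\{f_A:A\in\hA\}$ of $F(\hA)$ is already in echelon form with $\ins(f_A)=A$, so the argument of the previous paragraph yields $\hH_F(F(\hA))=\hA$. Part~(ii) is then immediate: writing the monomial subspace as $W=F(\hA)$, part~(iii) gives $F(\hH_F(W))=F(\hA)=W$. The whole proof amounts to formalising reduced row echelon form in the exterior-algebra setting the paper has set up, and I foresee no genuine difficulties beyond being careful that every elimination step respects the reverse colex order.
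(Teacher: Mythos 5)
Your proposal is correct and follows the same route as the paper: the paper's (one-line) proof of part (i) is precisely the observation that a basis of $W$ in reduced row echelon form with respect to the ordered monomial basis $F_r$ has distinct initial sets, which you spell out via Gaussian elimination, and parts (ii) and (iii) are then the same immediate consequences of $F_r$ being a basis. Your added detail (every $\ins(w)$ for $w\in W$ is a pivot set, and (ii) follows from (iii)) is just an expansion of what the paper leaves implicit.
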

\begin{proof} For~\ref{it:dim}, note that the elements of any basis of $W$ whose matrix in $\Ffull$ is in reduced row echelon form \textcolor{black}{with respect to our ordering of $\pown$} must have distinct initial sets. That $\Ffull$ is a basis of $\expow V$ implies ~\ref{it:inverse}, and~\ref{it:monomialinverse} follows by applying $F$ to both sides (i.e., sending the hypergraphs to the correpsonding $F$-monomial subspaces). 
\end{proof}

We note that taking initial monomials, often with respect to a generic basis, is an important tool in the study of monomial ideals (see e.g.~\cite{HH11}); generally it is applied to ideals, but we will be interested almost everywhere in mere subspaces (Theorem~\ref{thm:LYM} is the only exception). It is also easy to describe Kalai's algebraic shifting~\cite{KalaiAlgShift} in this notation: the algebraic shift of a hypergraph $\hA$ with ground set $[n]$ is the hypergraph $\hH_F(I(\hA))$, where the identity matrix $I$ induces the standard basis of $\R^n$, and $F \in \GL_n(\R)$ is generic. We will use genericity in a similar spirit, but will need to be able to modify the dimension of the underlying vector spaces; see Sections~\ref{sec:genericproj} and~\ref{sec:dimfracs}. 

%
%

\subsection{\textcolor{black}{Intersection and Annihilation}}

We define a hypergraph $\hA \subseteq 2^{[n]}$ to be \textit{intersecting} if $A\cap B \not= \emptyset$ for all  $A,B \in  \hA$.  It is easy to see that if $\hA$ is intersecting then $|\hA|\le 2^{n-1}$, as $\hA$ can contain at most one set from each pair $\{A,[n]\setminus A\}$. For $r>n/2$, it is clear that any $r$-uniform hypergraph is intersecting.  However, for $r\le n/2$, the situation is more interesting.   The classical Erd\H os-Ko-Rado Theorem, which is both an important tool in extremal combinatorics and the center of a web of generalizations (see, \textcolor{black}{for instance,} Godsil and Meager~\cite{GM16}), gives an optimal bound on the size of an $r$-uniform intersecting family.

\begin{theorem}[Erd\H{o}s, Ko, Rado \cite{EKR}] \label{thm:EKR}
Let $\hA \subseteq 2^{[n]}$ be an intersecting hypergraph. Then $|\hA| \leq 2^{n-1}$.  
Furthermore,  if $\hA $ is $r$-uniform, where $r\le n/2$, then
\[
|\hA| \leq \binom{n-1}{r-1}.
\]
\end{theorem}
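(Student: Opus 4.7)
The plan is to handle the two bounds separately: the general bound $|\hA|\le 2^{n-1}$ by a trivial complement-pairing argument, and the uniform bound $|\hA|\le\binom{n-1}{r-1}$ by Katona's cyclic double-counting method, which is both short and avoids the shifting machinery.

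For the first bound, I would partition $2^{[n]}$ into the $2^{n-1}$ complementary pairs $\{A,[n]\setminus A\}$. Any two complementary sets are disjoint, so an intersecting family contains at most one member of each pair, giving $|\hA|\le 2^{n-1}$ immediately.

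For the $r$-uniform bound with $r\le n/2$, I would use Katona's cyclic permutation trick. Call an $r$-subset $B\subseteq[n]$ an \emph{arc} in a cyclic ordering $\sigma$ of $[n]$ if its elements are consecutive in $\sigma$. The first step is a local lemma: for any fixed cyclic ordering $\sigma$, at most $r$ arcs can belong to $\hA$. To see this, suppose $A$ is an arc of $\sigma$ that lies in $\hA$. Any other arc in $\hA$ must intersect $A$, so it starts within $r-1$ positions to the left or right of $A$'s starting position. Group the starting positions into $r-1$ pairs consisting of one position $k$ steps clockwise and one position $r-k$ steps counterclockwise from $A$'s start ($1\le k\le r-1$): the two arcs in each such pair are disjoint (using $n\ge 2r$), so $\hA$ can contain at most one arc from each pair, plus $A$ itself, giving at most $r$ arcs total.

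The second step is the double count. Let $N$ be the number of pairs $(\sigma, B)$ where $\sigma$ is a cyclic ordering of $[n]$ and $B\in\hA$ is an arc in $\sigma$. Counting by $\sigma$, the local lemma gives $N\le r\cdot(n-1)!$. Counting by $B$, each $B\in\hA$ is an arc in exactly $r!(n-r)!$ cyclic orderings (arrange the elements of $B$ in a consecutive block in $r!$ ways and the remaining $n-r$ elements in a complementary block in $(n-r)!$ ways, fixing one position to kill the cyclic symmetry), so $N=|\hA|\cdot r!(n-r)!$. Combining,
\[
|\hA|\le\frac{r\cdot(n-1)!}{r!(n-r)!}=\binom{n-1}{r-1}.
\]
The only nontrivial step is the local lemma, and the hypothesis $r\le n/2$ enters precisely there, ensuring that positions separated by $r$ around an $n$-cycle index disjoint arcs.
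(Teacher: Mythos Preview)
Your proof is correct. Note, however, that the paper does not actually supply its own proof of Theorem~\ref{thm:EKR}: the Erd\H{o}s--Ko--Rado theorem is quoted as a classical result (with citation~\cite{EKR}) and used as a black box to deduce Theorem~\ref{prop:extEKRspaces}. The only argument the paper gives is the complement-pairing observation for the bound $|\hA|\le 2^{n-1}$, stated in the sentence immediately preceding the theorem---and that matches your first paragraph exactly.

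For the $r$-uniform bound, you have written out Katona's cyclic-permutation proof, which is standard and complete. The local lemma (at most $r$ arcs per cyclic order) is argued correctly: your pairing of the arc starting $k$ steps clockwise with the arc starting $r-k$ steps counterclockwise from $A$'s start identifies two disjoint arcs (this is precisely where $n\ge 2r$ is used), so at most one from each of the $r-1$ pairs lies in $\hA$, plus $A$ itself. The double count is also right: the identity $r\cdot(n-1)!/(r!(n-r)!)=\binom{n-1}{r-1}$ gives the bound. So there is nothing to compare against in the paper beyond the trivial first part; you have simply supplied a full proof where the paper was content to cite one.
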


What is the appropriate exterior analogue? Define a subspace $W \subseteq \expow V$ to be \textit{self-annihilating} if $v\wedge w=0$ for all $v,w\in W$. This definition allows a direct extension of Theorem~\ref{thm:EKR} to subspaces of $\expow V$.

\begin{theorem}
\label{prop:extEKRspaces}
Let $V=\R^n$ and let $W$ be a self-annihilating subspace of $\expow V$. Then
$\dim W\le 2^{n-1}.$
Furthermore,  if $W \subseteq \expow^r V$, where $r\le n/2$, then
\begin{equation}\label{selfie}
\dim W \leq \binom{n-1}{r-1}.
\end{equation}
\end{theorem}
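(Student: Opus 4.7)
The strategy is to reduce both inequalities to intersection-family bounds via the initial hypergraph $\hH_F(W)$ constructed in Section~\ref{sec:exterior}. The key claim in both cases is that when $W$ is self-annihilating, $\hH_F(W)$ is an intersecting family.

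For the homogeneous bound~\eqref{selfie}, suppose for contradiction that disjoint sets $A, B \in \hH_F(W) \subseteq \sbsts{n}{r}$ are witnessed by $v, w \in W$ with $\ins(v) = A$ and $\ins(w) = B$. Write $v = c_A f_A + \sum_{A' < A} c_{A'} f_{A'}$ and $w = d_B f_B + \sum_{B' < B} d_{B'} f_{B'}$ with $c_A, d_B \neq 0$. The coefficient of $f_{A \cup B}$ in $v \wedge w$ is a sum of $\pm c_{A'} d_{B'}$ over pairs $(A', B')$ with $A' \cap B' = \emptyset$, $A' \cup B' = A \cup B$, $A' \leq A$, and $B' \leq B$. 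I claim only $(A', B') = (A, B)$ contributes. Setting $X = A' \cap A$ and $Y = A' \cap B$, disjointness of $A'\cup B' = A \cup B$ forces $B' = (A \setminus X) \cup (B \setminus Y)$, from which one computes $A \triangle A' = (A \setminus X) \cup Y = B \triangle B'$. Hence $A' < A$ iff $m := \max((A\setminus X)\cup Y) \in A \setminus X$, while $B' < B$ iff $m \in Y$; since $A \setminus X$ and $Y$ are disjoint, these are mutually exclusive. Any $(A', B') \neq (A, B)$ therefore forces $c_{A'} = 0$ or $d_{B'} = 0$, so the coefficient of $f_{A \cup B}$ equals $\pm c_A d_B \neq 0$, contradicting $v \wedge w = 0$. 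Since $\hH_F(W) \subseteq \sbsts{n}{r}$ is intersecting and $r \leq n/2$, the Erd\H{o}s--Ko--Rado theorem gives $\dim W = |\hH_F(W)| \leq \binom{n-1}{r-1}$.

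For the general bound $\dim W \leq 2^{n-1}$, I refine reverse colex to a total order on all of $2^{[n]}$ by declaring $A > B$ whenever $|A| > |B|$, breaking ties by colex. This order still satisfies~\eqref{eq:colex}, so $\ins$, $\hH_F$, and the identity $\dim W = |\hH_F(W)|$ from Lemma~\ref{lem:inithyp} extend to arbitrary subspaces of $\expow V$. The argument above now applies with one additional preliminary step: for disjoint $A, B \in \hH_F(W)$, disjointness gives $|A'| + |B'| = |A' \cup B'| = |A| + |B|$, while the refined order forces $|A'| \leq |A|$ and $|B'| \leq |B|$, so cardinalities must match and the homogeneous uniqueness argument applies. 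Hence $\hH_F(W)$ is intersecting in $2^{[n]}$, and since at most one of $S$ and $[n] \setminus S$ can belong to an intersecting family, $|\hH_F(W)| \leq 2^{n-1}$.

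The main obstacle is the uniqueness step in the coefficient calculation: showing that the monotonicity~\eqref{eq:colex} forces the inequalities $A' < A$ and $B' < B$ to be simultaneously controlled by the location of a single element $m$, so that no cancellation of the leading $f_{A \cup B}$ coefficient by lower-order cross terms is possible. The extension to the inhomogeneous case is then essentially bookkeeping in the refined order.
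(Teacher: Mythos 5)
Your proof is correct and is essentially the paper's own argument: pass to the initial hypergraph, show that the $f_{A\cup B}$-coefficient of $v\wedge w$ cannot vanish when $\ins(v)$ and $\ins(w)$ are disjoint, and then apply the Erd\H{o}s--Ko--Rado theorem (resp.\ the complement-pair count), with your degree-graded order simply making explicit the extension to inhomogeneous elements that the paper leaves implicit. One small slip: under the paper's reverse colex convention the two characterizations are interchanged (in fact $A'<A$ iff $m\in Y$ and $B'<B$ iff $m\in A\setminus X$, so you have effectively used colex rather than reverse colex), but since you only invoke their mutual exclusivity, and either term order works here, the argument is unaffected.
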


Our proof of Theorem~\ref{prop:extEKRspaces} uses the correspondence between hypergraphs and subspaces developed in Section~\ref{sec:exterior}; we show that the initial hypergraph of a self-annihilating space must be intersecting.  We note that Woodroofe~\cite{Woodroofe} has recently given an alternative proof of our Theorem~\ref{prop:extEKRspaces}, based on the Borel Fixed Point Theorem for the actions of algebraic groups on projective varieties.

\begin{proof}[Proof of Theorem \ref{prop:extEKRspaces}] Fix $F \in \GL_n(\R)$. By the Erd\H{o}s-Ko-Rado Theorem and Lemma~\ref{lem:inithyp}, it is enough to verify that $\hH_F(W)$ is an intersecting \textcolor{black}{hypergraph}, as
$\dim(W)=|\hH_F(W)|$. 
Assume, looking for a contradiction, that for some nonzero $u, w \in W $ we have $A \cap B = \emptyset$, where $A = \ins_F(u)$ and $B=\ins_F(w)$. Since $u \wedge w =0$, there must be other sets $A',B'$ in the supports of $u,w$ respectively with $A' \cap B' = \emptyset$ and $A' \cup B' = A \cup B$ (or else $f_{A \cup B}$ will have non-zero coefficient when we expand $u\wedge w$ in the $F$-monomial basis $\Ffull$). \textcolor{black}{It must be true that $|A'|=|A|$ and $|B'|=|B|$, since $A$ and $B$ are both initial sets (so $|A| \geq |A'|$ and $|B| \geq |B'|$) and $|A'| +|B'| = |A | + | B|$.}

Let $A_0 = A \cap A' $,  $B_0=B \cap B'$, $X = A \cap B'$, and $Y= B \cap A'$.  
This gives disjoint decompositions
\begin{align*}
 A & = A_0 \cup X, \quad &  B  &= B_0 \cup Y, &\\
 A' & = A_0 \cup Y, \quad &  B' &=  B_0 \cup X,&
\end{align*}
so by~\eqref{eq:colex} 
\[
A > A' \iff X >Y \iff B'>B,
\]
contradicting either $A=\ins_F(u)$ or $B=\ins_F(w)$. 
\end{proof}

Both parts of Theorem~\ref{prop:extEKRspaces} are optimal.  For any fixed vector $v\in V$, the space
$\{v\wedge z: z\in \expow V\}$ has dimension $2^{n-1}$. For $r\le n/2$ the space 
$\{v\wedge z: z\in \expow^{r-1} V\}$ has dimension $\binom {n-1}{r-1}$.  

For $r<n/2$, the extremal cases in Theorem \ref{thm:EKR} have a nice characterization: there is a single element contained in all sets of the family. It is an interesting question to describe the extremal examples for Theorem~\ref{prop:extEKRspaces}. This is trivially true for $r=1$; it is also true for $r=2$, and follows from the fact that in this case
elements of self-annihilating spaces are decomposable. Could all all extremal examples be of this form? 

%
%


\subsection{\textcolor{black}{Mutually annihilating pairs of subspaces}}

We now consider pairs of subspaces.  Two subspaces $U,W$ of the exterior algebra are {\em mutually annihilating} if $u\wedge w=0$ for all $u\in U$ and $w\in W$.  We have the following counterpart to Theorem \ref{prop:extEKRspaces} (which implies \eqref{selfie} in the special case where we take $U=W$).

\begin{theorem}
\label{prop:crossintspaces}
Let $V=\R^n$ and $1\le r,s\le n/2$.  Suppose that $U \subseteq \expow^r V$ and $W\subseteq \expow^s V$, and $u\wedge w=0$ whenever $u\in U$ and $w\in W$.  Then
\[
\dim U \dim W \leq \binom{n-1}{r-1}\binom{n-1}{s-1}.
\]
\end{theorem}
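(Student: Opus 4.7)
My plan is to follow the template of the proof of Theorem~\ref{prop:extEKRspaces}, reducing via initial sets to a cross-intersecting analogue of the Erd\H{o}s-Ko-Rado theorem. Fix any $F \in \GL_n(\R)$ and set $\hA = \hH_F(U) \subseteq \sbsts{n}{r}$, $\hB = \hH_F(W) \subseteq \sbsts{n}{s}$. By Lemma~\ref{lem:inithyp}, $\dim U = |\hA|$ and $\dim W = |\hB|$, so it suffices to bound $|\hA|\cdot|\hB|$.

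The heart of the argument is the claim that $\hA$ and $\hB$ are cross-intersecting: every $A \in \hA$ meets every $B \in \hB$. Suppose not; pick nonzero $u \in U$ and $w \in W$ with $A = \ins(u)$, $B = \ins(w)$, and $A \cap B = \emptyset$. Then $f_A \wedge f_B$ contributes a nonzero multiple of $f_{A \cup B}$ to the expansion of $u \wedge w$ in the $F$-monomial basis, so since $u \wedge w = 0$ there must exist $A' \in \sbsts{n}{r}$ in the support of $u$ and $B' = (A \cup B) \setminus A'$ in the support of $w$ with $A' \cap B' = \emptyset$ and $A' \neq A$. Writing $A_0 = A \cap A'$, $X = A \setminus A_0$, $Y = A' \setminus A_0$, and $B_0 = B \cap B'$, the sets $X, Y$ are disjoint and nonempty, with $A = A_0 \cup X$, $A' = A_0 \cup Y$, $B = B_0 \cup Y$, $B' = B_0 \cup X$. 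By~\eqref{eq:colex}, $A > A'$ iff $X > Y$, while $B > B'$ iff $Y > X$; since exactly one of $X > Y$ and $Y > X$ holds, we contradict either $A = \ins(u)$ or $B = \ins(w)$.

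With cross-intersection established, the theorem follows from the classical cross-intersecting Erd\H{o}s-Ko-Rado inequality: whenever $r + s \le n$ and $\hA \subseteq \sbsts{n}{r}$, $\hB \subseteq \sbsts{n}{s}$ are cross-intersecting, $|\hA| \cdot |\hB| \le \binom{n-1}{r-1}\binom{n-1}{s-1}$; the hypothesis $r, s \le n/2$ gives $r + s \le n$. I expect the cross-intersection claim to be the main obstacle; the combinatorial bound is well known (provable, for example, by Katona's cycle method, or following Pyber). One subtle point worth flagging is that when $r = s$, the \emph{trivial swap} $A' = B$, $B' = A$ is a priori possible, but the same reverse colex analysis still applies (with $A_0 = B_0 = \emptyset$, $X = A$, $Y = B$) and yields the same contradiction.
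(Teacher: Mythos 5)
Your proposal is correct and follows essentially the same route as the paper: pass to the initial hypergraphs $\hA=\hH_F(U)$, $\hB=\hH_F(W)$, show they are cross-intersecting by the reverse-colex initial-set argument, and then invoke the known cross-intersecting Erd\H{o}s--Ko--Rado bound (the paper cites Pyber and Matsumoto--Tokushige). In fact you spell out the cross-intersection step in more detail than the paper does (it just says ``arguing as before''), and your uniform treatment of the trivial swap $A'=B$, $B'=A$ via $A_0=B_0=\emptyset$ is a clean way to handle the case $r=s$ that the paper's earlier proof excluded separately.

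One inaccuracy worth fixing: the combinatorial black box is misquoted. The inequality $|\hA|\,|\hB|\le\binom{n-1}{r-1}\binom{n-1}{s-1}$ for cross-intersecting families does \emph{not} hold under the hypothesis $r+s\le n$ alone. For instance, with $n=4$, $r=1$, $s=3$, take $\hA=\{\{1\},\{2\}\}$ and $\hB=\{\{1,2,3\},\{1,2,4\}\}$: these are cross-intersecting, yet $|\hA|\,|\hB|=4>\binom{3}{0}\binom{3}{2}=3$. The Matsumoto--Tokushige theorem requires $r,s\le n/2$, which is exactly the hypothesis of the present theorem, so your application is still valid; just cite the result under that condition rather than $r+s\le n$.
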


\begin{proof}
This follows similar lines to the proof of Theorem \ref{prop:extEKRspaces}: we consider the hypergraphs $\hA=\hH_F(U)$ and $\hB=\hH_F(W)$.  Then $\hA$ is $r$-uniform, $\hB$ is $s$-uniform, and (arguing as before) we have $A\cap B$ nonempty for all $A\in\hA$ and $B\in \hB$.  This means that $\hA$ and $\hB$ are {\em cross-intersecting systems}, and so by results of Pyber \cite{P86} and Matsumoto and Tokushige \cite{MT89} we have
$$(\dim U)(\dim W)=|\hA| |\hB|\le \binom{n-1}{r-1}\binom{n-1}{s-1},$$
as required.
\end{proof}

Note that it is possible to attain equality in Theorem \ref{prop:crossintspaces} by fixing $v\in V$ and setting 
$U= \{v\wedge z: z\in\expow^{r-1}\textcolor{black}{V}\}$ and $W=\{v\wedge z: z\in\expow^{s-1}\textcolor{black}{V}\}$.  As with Theorem \ref{prop:extEKRspaces}, it would be interesting to characterize the extremal examples when $r,s<n/2$.

%
%

\subsection{(Upwards) Local and global LYM for the exterior algebra} 
\label{sec:LYMup}

The \textit{LYM inequality} of
Lubell, Meshalkin and Yamamoto \cites{Lubell,Meshalkin,Yamamoto} is a central result in extremal set theory.

\begin{theorem}[LYM inequality]\label{thm:realLYM} Let $\hA = \{A-i\dots, A_m\} \subseteq \pown$ be an antichain under the containment order (that is, $A_i \not\subseteq A_j$ for all $i \not= j$). Then 
\[
\sum_{i=1}^m \frac{1}{\binom{n}{|A_i|}} \leq 1. 
\]
\end{theorem}
One approach to proving the LYM inequality relies on elementary counting bounds known as \textit{Local LYM inequalities}, a version of which can be found as far back as Sperner \cite{Sp28}. Let $\hA \subseteq \sbsts{n}{a}$ be an $a$-uniform hypergraph.  \textcolor{black}{For $1\le c\le n-a$, the \textit{$c$-th upper shadow} of $\hA$ is the hypergraph
\[
\partial^c \hA = \left\{B \in \sbsts{n}{a+c}: B \supseteq A \textrm{ for some } A \in \hA \right\}.
\]
For $1\le c\le a$, the \textit{$c$-th lower shadow} of $\hA$ is the hypergraph
\[
\partial_c \hA = \left\{C \in \sbsts{n}{a-c}: B \subseteq A \textrm{ for some } A \in \hA \right\}.
\]}

\begin{lemma}[Local LYM inequality]\label{lem:localLYM}
Let $\hA \subseteq \sbsts{n}{a}$ be an $a$-uniform hypergraph with ground set $[n]$. For any $ 0\leq c \leq n-a$, 
\textcolor{black}{\begin{align}
\frac{|\partial^b \hA|}{\binom{n}{a+c}} & \geq \frac{|\hA|}{\binom{n}{a}}\label{eq:LYMup}\\
\intertext{For any $0 \leq c \leq a$,}
\frac{|\partial_c \hA|}{\binom{n}{a-c}} & \geq \frac{|\hA|}{\binom{n}{a}}.\label{eq:LYMdown}
\end{align}}
For both directions, equality holds if and only if $\hA = \emptyset$ or $\hA = \sbsts{n}{a}$. 
\end{lemma} 
Note that set complementation interchanges the upwards direction \eqref{eq:LYMup} and the downwards direction \eqref{eq:LYMdown} of Lemma~\ref{lem:localLYM}. The Local LYM Inequality is also known as the \textit{normalized matching property}. Kleitman~\cite{Kleitman} proved that for finite ranked posets the normalized matching property is equivalent to the LYM bound 
on the size of an antichain.  

Both Theorem~\ref{thm:realLYM} and Lemma~\ref{lem:localLYM} carry over to exterior algebra. Theorem~\ref{thm:LYM}, a version of the full LYM inequality,  bounds minimal generating sets for graded ideals in $\expow V$. The proof of Theorem~\ref{thm:LYM} from an exterior upwards Local LYM (Theorem~\ref{prop:extLYM}) parallels a standard inductive proof of the full LYM inequality from the upwards Local LYM inequality (see, e.g.,~\cite{WhiteBook}*{p.\ 13}). Since the downwards exterior Local LYM (Theorem~\ref{prop:extLYMdown}) will require the additional machinery of interior products to state and prove, we postpone it to section~\ref{sec:downLYM}.

For subspaces $U, W \subseteq \expow V$, define
$$U \wedge W = \myspan\{u \wedge w\,:\, u\in U,\,w \in W \};$$
we also write $U \wedge w = U \wedge \myspan\{w\}$.

Fix $F \in \GL_n(\R)$. For a monomial subspace $F(\hA) \subseteq \expow^r V$,    equations~\eqref{eq:extmonmult} and~\eqref{eq:colex} imply
\begin{equation}\label{bercy2}
F(\hA) \wedge \expow^c V = \myspan\left\{ f_A \wedge f_J \, : \, A \in \hA,\, J \in \sbsts{n}{c}\right\} = F(\partial^c \hA).
\end{equation}
That is, for monomial spaces, wedging with an exterior power of the ground space yields the monomial space generated by the upper shadow of the initial hypergraph.  
It follows \textcolor{black}{from Lemma~\ref{lem:localLYM}} that 
$$\frac{\dim\left(F(\hA) \wedge \expow^c V \right)}{\binom{n}{r+c}} \geq 
\frac{\dim(F(\hA))}{\binom{n}{r}}.$$
Note that the denominators satisfy $\binom{n}{r+c} = \dim(\expow^{r+c}V)$ and $\binom{n}{r} = \dim(\expow^r V)$, respectively, so we have bounded the dimensional fraction of the ambient space $\expow^{r+c}V$ occupied by the wedge product space $F(\hA) \wedge \expow^c V$. 

\textcolor{black}{What about general homogeneous subspaces? Let $W \subseteq \expow^rV$. If $A \in \hH_F(W)$, then there exists $w \in W$ with $\ins(w)=A$. For $ A\cap B = \emptyset$, 
equations \eqref{eq:extmonmult} and~\eqref{eq:colex} imply that $\ins(w \wedge B) = A\cup B$. Clearly $w \wedge f_B \in W \wedge\expow^{|B|} V$. Hence we have the containment}
\begin{equation}\label{eq:extshadow}
\hH_F\left(W \wedge \expow^c V\right) 
\supseteq \left\{ A\cup B\! : \, A \in \hH_F(W), B \in \binom{[n]\setminus A}{c} \right\}
=\partial^c (\hH_F(W)), 
\end{equation}
and this suffices to prove a Local LYM bound. 
\begin{theorem}[Upwards Local LYM in the exterior algebra]\label{prop:extLYM} Let $V=\R^n$  and $W \subseteq \expow^r V$. Then
for $0 \leq c \leq n-r$, 
\[
\frac{\dim\left( W\wedge  \expow^c V\right)}{\binom{n}{r+c}} \geq \frac{\dim W}{\binom{n}{r}}.
\]
Equality occurs only when $W= \{0\}$ or $W = \expow^r V$. 
\end{theorem}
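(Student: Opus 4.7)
The plan is to reduce to the combinatorial Local LYM inequality (Lemma~\ref{lem:localLYM}) via the initial-hypergraph correspondence, exactly as the paper's framework suggests.

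First, fix any basis $F \in \GL_n(\R)$ and set $\hA := \hH_F(W)$. By Lemma~\ref{lem:inithyp}\ref{it:dim}, we have $\dim W = |\hA|$ and, more importantly, $\dim(W \wedge \expow^c V) = |\hH_F(W \wedge \expow^c V)|$. The crux is therefore to justify the containment~\eqref{eq:extshadow}, namely $\hH_F(W \wedge \expow^c V) \supseteq \partial^c \hA$. For each $A \in \hA$, pick a nonzero $w \in W$ with $\ins(w) = A$, and for each $J \in \binom{[n]\setminus A}{c}$, consider $w \wedge f_J \in W \wedge \expow^c V$. Since $\ins(w) \cap J = \emptyset$, equation~\eqref{eq:initialset} gives
\[
\ins(w \wedge f_J) = A \cup J,
\]
so $A \cup J \in \hH_F(W \wedge \expow^c V)$, establishing the desired containment (and in particular showing $w \wedge f_J \neq 0$, so we are not accidentally adding $0$ to $\hH_F$).

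Combining these observations,
\[
\dim(W \wedge \expow^c V) = |\hH_F(W \wedge \expow^c V)| \geq |\partial^c \hA|.
\]
Now I apply the classical Local LYM inequality (Lemma~\ref{lem:localLYM}) to the $r$-uniform hypergraph $\hA$ with shadow parameter $c$, which yields
\[
\frac{|\partial^c \hA|}{\binom{n}{r+c}} \geq \frac{|\hA|}{\binom{n}{r}} = \frac{\dim W}{\binom{n}{r}}.
\]
Substituting $\binom{n}{j} = \dim \expow^j V$ and chaining the two inequalities gives the claim.

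The only potential obstacle is the verification of~\eqref{eq:extshadow}, but this is essentially handed to us by~\eqref{eq:initialset}: the reverse colex order respects disjoint union, so taking an element whose initial set is $A$ and wedging it with $f_J$ for $J$ disjoint from $A$ produces an element whose initial set is precisely $A \cup J$. Once this observation is in place, the theorem reduces to a direct quotation of the combinatorial Local LYM bound, with no need to track the combinatorics of $W$ itself beyond its initial hypergraph.
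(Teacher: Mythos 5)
Your proof is correct and follows the same route as the paper: fix a basis $F$, establish the containment $\hH_F(W\wedge\expow^c V)\supseteq\partial^c\hH_F(W)$ via \eqref{eq:initialset}, and then combine Lemma~\ref{lem:inithyp} with the combinatorial Local LYM inequality. Your explicit verification of \eqref{eq:extshadow} is exactly the justification the paper leaves implicit, so there is nothing to add.
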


\begin{proof} 
Fix $F \in \GL_n(\R)$. By~\eqref{eq:extshadow} and Lemmas~\ref{lem:localLYM} and ~\ref{lem:inithyp},
\begin{equation*}
\frac{\dim (W \wedge \expow^c V)}{\binom{n}{r+c}} 
=\frac{|\hH_F(W\wedge\expow^c V)|}{\binom{n}{r+c}}
 \geq \frac{|\partial^c \hH_F(W)|}{\binom{n}{r+c}} \geq \frac{|\hH_F(W)|}{\binom{n}{r}}
= \frac{\dim W}{\binom{n}{r}}.
\end{equation*}
Equality implies
\[
\frac{|\partial^c \hH_F(W)|}{\binom{n}{r+c}}= \frac{|\hH_F(W)|}{\binom{n}{r}}.
\]
By Theorem~\ref{thm:realLYM}, $\hH_F(W) = \emptyset$ or $\hH_F(W) = \sbsts{n}{r}$, so by Lemma~\ref{lem:inithyp}, $W= \{0\}$ or $W = \expow^r V$.
\end{proof}

Theorem~\ref{prop:extLYM} can be viewed as a comparison of the $r$- and $(r+c)$-entries in the $f$-vector of the graded $\expow V$-ideal generated by $W$. The result could also be deduced from a suitable version of the  Kruskal-Katona theorem for $\expow V$~(as found, for example, in \cite{AHH97}*{Theorem 4.1}).

We say $I \subseteq \expow V$ is a \textit{graded ideal} when it is an ideal in $\expow V$ and
\[
I = \bigoplus_{i=0}^n \left(I \cap \expow^r V\right). 
\]
It is equivalent to require that $I$ be generated by homogeneous (although not necessarily decomposable) elements of $\expow V$. Graded ideals in $\expow V$ are two-sided. 

\begin{theorem}[Exterior LYM] \label{thm:LYM}
Let $V = \R^n$ be an $n$-dimensional real vector space, and let $I \subseteq \expow V$ be a graded ideal. Let $A = \{a_1,\dots, a_m\}$ be a minimal set of homogeneous generators for $I$, where $a_i \in \expow^{r_i} V$. Then
\[
\sum_{i = 1}^m \frac{1}{\binom{n}{r_i}} \leq 1. 
\]
Equality occurs only when  $\myspan{A} = \expow^r V$ for some $r$. 
\end{theorem}
\begin{proof} Without loss of generality, we may assume that $r_1 \leq r_2 \leq \dots \leq r_m$. Note that for each $r$, the elements $\{a_i\,|\, r_i=r\} \subseteq \expow^r V$ are linearly independent by minimality of $A$. Now define linear subspaces $Z_i \subseteq \expow^{r_i} V$ recursively by
\[
Z_1 = \myspan\{a_1\} \qquad \textrm{and} \qquad Z_{i+1}  = \myspan\left\{ \left(\expow^{r_{i+1} -r_i} V \right) \wedge Z_i,\, a_{i+1}\right\}.
\]
First, we claim that $a_{i+1} \not\in \left(\expow^{r_{i+1}-r_i} V\right) \wedge Z_i$. Why? The elements of $\left(\expow^{r_{i+1}-r_i} V \right)\wedge Z_i$ are  of the form
\[
\sum_{j=1}^i w_j \wedge a_j,
\]
where $w_j \in \expow^{r_{i+1}-r_j} V$, and if $a_{i+1}$ were of this form, then $A$ would not be a minimal generating set of the ideal $I$.
Theorem~\ref{prop:extLYM} implies that for each $i$
\begin{equation}\label{eq:LYMstep}
\frac{\dim \left(\expow^{r_{i+1}-r_i} V \wedge Z_i \right)}{\binom{n}{r_{i+1}}} \geq \frac{\dim Z_i}{\binom{n}{r_i}}
\end{equation}
and thus
\[
\frac{\dim Z_{i+1}}{\binom{n}{r_{i+1}}} = \frac{1+\dim \left(\expow^{r_{i+1}-r_i} V \wedge Z_i \right)}{\binom{n}{r_{i+1}}} \geq \frac{1}{\binom{n}{r_{i+1}}} + \frac{\dim Z_i}{\binom{n}{r}}.
\]
We can now proceed recursively down from $1 \geq \frac{\dim Z_m}{\binom{n}{r_m}}$.

If equality occurs, then we must have equality in~\eqref{eq:LYMstep} for each $i$. By Theorem~\ref{prop:extLYM}, that is only possible when $r_{i+1}=r_i$ or $Z_i = \expow^{r_i} V$. Hence $\{a_1,\dots, a_m\}$ forms a basis for some $\expow^r V$. 
\end{proof}

%
%

\subsection{Interior products and a downwards Local LYM}
\label{sec:downLYM}

In order to state an exterior downwards Local LYM inequality, we will need to use the \textit{interior product} $\innprod:\, \expow V \times \expow V^* \rightarrow \expow V$, where $V$ is an $n$-dimensional real vector space and $V^*$ is its dual. Interior products can reduce exterior grade, which is a necessary ingredient for a downwards Local LYM. Earlier applications of the interior product to combinatorics include Kalai's exterior matroids of hypergraphs~\cites{Kalai85,Kalai90}, which were also studied by Pikhurko~\cite{Pikhurko}, and  Karasev's exterior algebra presentation~\cite{Karasev} of Huang's spectacular proof of the Sensitivity Conjecture~\cite{Huang}.

For duality in the exterior algebra and interior products, we largely follow the notation of Fulton and Harris~\cite{FultonHarris}*{Appendix B3} and Bourbaki~\cite{Bourbaki89}*{Chapter III, \S 11}.  
The vector space dual of $\expow V$ can be identified with $\expow V^*$. Indeed, for each $k$,  $\left(\expow^k V\right)^*$ can be identified with $\expow^k V^*$, and 
\begin{equation}\label{eq:dualgrade}
\langle v_1 \wedge \dots \wedge v_k, w^*_1 \wedge \dots \wedge w^*_k \rangle = \det (w^*_j(v_i))
\end{equation}
for any $v_1,\dots, v_k \in V$, $w^*_1, \dots, w^*_k \in V^*$ \textcolor{more}{(see~\cite{FultonHarris}*{p.\ 476})}. Then taking the direct sum of the duals in each grade gives the dual of the entire graded space. 

It will be useful to work with duals in an explicit basis. Let $F=\{f_1,\dots,f_n\}$  be a basis for $V$. We write $F^*=\{f^*_1,\dots, f^*_n\}$ for the corresponding dual basis for the dual space $V^*$, for which the dual pairing $\langle \cdot ,\cdot \rangle$ satisfies
\[
\langle f_j, f^*_i \rangle = f^*_i(f_j) = \begin{cases}
1 & i=j,\\
0 & \textrm{otherwise}.
\end{cases}
\]Consistent with our earlier notation, define $f^*_A = f^*_{a_1} \wedge \dots \wedge f^*_{a_k} \in \expow^k V^*$ for $A = \{a_1, \dots, a_k\} \subseteq [n]$, where we have listed the elements $a_1, \dots, a_k$ of $A$ in increasing order.
Then $\{f^*_A\,:\, A \subseteq [n]\}$ is a basis for $\expow V^*$, and, for $A,B \subseteq [n]$, by \eqref{eq:dualgrade} and the grading structure we have
\[
\langle f_B, f^*_A  \rangle = f^*_A(f_B) = \begin{cases}
1 & A=B,\\
0 & \textrm{otherwise}.
\end{cases}
\]

\textcolor{black}{
We can now define  the interior product. For $0 \leq b \leq a \leq n$, we define $\innprod:\, \expow V \times \expow V^* \rightarrow \expow V$ to be the transpose of the wedge product in $\expow V^*$. That is, for any $v \in \expow V$,  $w^*, u^* \in \expow V^*$, 
\begin{equation}\label{eq:intprodpairing}
\langle v \innprod w^*, u^* \rangle = \langle v, w^* \wedge u^* \rangle.
\end{equation}
(Recall that the angle brackets denote the duality pairing). 
Let $\{f_1,\dots, f_n\}$  be a basis of $V$ and let  
$\{f^*_1,\dots, f^*_n\}$ be the corresponding dual basis of $V^*$. Then \eqref{eq:intprodpairing} and~\eqref{eq:extmonmult} imply that for $A, B \subseteq [n]$, \textcolor{fixed}{
\begin{equation}\label{eq:intproddef}
f_A \innprod f^*_B = 
 \begin{cases}
 (-1)^{\inversions{B}{A\setminus B}} f_{A \setminus B} & B \subseteq A,\\
0 & \textrm{otherwise}.
\end{cases}
\end{equation}
where $\inversions{B}{A\setminus B}$ is defined in~\eqref{eq:inversions}.} Notice that when $ 0 \leq b \leq a \leq n$ and $x \in \expow^a V$, $y^* \in \expow^b V^*$, we have $x\innprod\,y^* \in \expow^{a-b} V$. We will sometimes call an interior product a \textit{contraction}.}

\textcolor{black}{ 
For subspaces $U \subseteq \expow V$ and $W^* \subseteq \expow V^*$, define
\[
U \innprod W^* = \myspan\{ u\innprod w^* \,:\, u\in U,\,w^* \in W^* \}.
\]
For $w^* \in W^*$, we also write $ U \innprod w^* =  U \innprod  \myspan\{w^*\} = \myspan\{u\innprod w^* \,:\, u \in U\} $.}

\textcolor{black}{We are ready to build our downwards Local LYM. Fix a basis $F$ for $V$. For a monomial subspace $F(\hA) \subseteq \expow^r V$,    equations~\eqref{eq:intproddef} and~\eqref{eq:colex}
imply 
\begin{equation}\label{bercy}
F(\hA)   \innprod \expow^c V^*   = \myspan\left\{ f_A  \innprod  f^*_J \, : \, A \in \hA,\, J \in \sbsts{n}{c}\right\} = F(\partial_c \hA).
\end{equation}
That is, for monomial spaces, contracting with an exterior power of the dual of the ground space yields the monomial space generated by the corresponding lower shadow of the initial hypergraph.  
It follows from Lemma~\ref{lem:localLYM} that 
$$\frac{\dim\left(  F(\hA) \innprod \expow^c V^*   \right)}{\binom{n}{r-c}} \geq 
\frac{\dim(F(\hA))}{\binom{n}{r}}.$$}

\textcolor{black}{What about general homogeneous subspaces? Let $W \subseteq \expow^rV$. If $A \in \hH_F(W)$, then there exists $w \in W$ with $\ins(w)=A$. For $B \subseteq A$, 
equations \eqref{eq:intproddef} and~\eqref{eq:colex} imply that $\ins(w \innprod f^*_B) = A\setminus B$. Clearly $w \innprod f^*_B \in W \innprod \expow^{|B|} V$. Hence we have the containment
\begin{equation}\label{eq:extshadowdown}
\hH_F\left(  W \innprod \expow^c V^* \right) 
\supseteq \left\{ A\setminus B\! : \, A \in \hH_F(W), B \in \binom{A}{c} 
\right\}
=\partial_c (\hH_F(W)).
\end{equation}
}
\begin{theorem}[Downwards Local LYM in the exterior algebra]\label{prop:extLYMdown} \textcolor{black}{Let $V=\R^n$  and $W \subseteq \expow^r V$. Then
for $0 \leq c \leq r$, 
\[
\frac{\dim\left( W \innprod \expow^c V^*\right) }{\binom{n}{r-c}} \geq \frac{\dim W}{\binom{n}{r}}.
\]}
Equality occurs only when $W= \{0\}$ or $W = \expow^r V$. 
\end{theorem}

\begin{proof} 
\textcolor{black}{Fix $F \in \GL_n(\R)$. By~\eqref{eq:extshadowdown} and Lemmas~\ref{lem:localLYM} and ~\ref{lem:inithyp},
\begin{equation*}
\frac{\dim \left(W  \innprod \expow^c V^* \right) }{\binom{n}{r-c}} 
=\frac{|\hH_F\left( W \innprod \expow^c V^* \right) |}{\binom{n}{r-c}}
 \geq \frac{|\partial^c \hH_F(W)|}{\binom{n}{r-c}} \geq \frac{|\hH_F(W)|}{\binom{n}{r}}
= \frac{\dim W}{\binom{n}{r}}.
\end{equation*}}
Equality implies
\[
\frac{|\partial^c \hH_F(W)|}{\binom{n}{r-c}} = \frac{|\hH_F(W)|}{\binom{n}{r}}.
\]
By Theorem~\ref{thm:realLYM}, $\hH_F(W) = \emptyset$ or $\hH_F(W) = \sbsts{n}{r}$, so by Lemma~\ref{lem:inithyp}, $W= \{0\}$ or $W = \expow^r V$.
\end{proof}


%
%

\subsection{\textcolor{black}{$t$-self-annihilation via interior products}}

\textcolor{black}{For $t>0$,  a hypergraph $\hA$ is called $t$-\textit{intersecting} when $|A \cap B| \geq t$ for all $A,B \in \mathcal{A}$.  Just as we did for intersecting hypergraphs, we would like to define an analogous notion in the exterior algebra: we will call these subspaces $t$-\textit{self-annihilating}. We will use interior products to do so, and then generalize Theorem~\ref{prop:extEKRspaces} to $t \geq 1$.}

\textcolor{black}{To motivate the upcoming definition of $t$-self-annihilating, we note that a hypergraph $\hA$ is $t$-intersecting exactly when the hypergraph $\{ A \setminus C\,:\, A \in \hA\}$ is intersecting for all sets $C$ having at most $t-1$ elements. In parallel, we define a subspace $W \subseteq \expow V$ to be \textit{$t$-self-annihilating} when 
\begin{equation}\label{eq:tselfanndef}
(u \innprod\, y^* ) \wedge ( w \innprod\,y^* ) = 0 
\end{equation}
for all $u,v \in W$ and all \textcolor{fixed}{decomposable} $y^* \in \expow ^{<t} V^*$. Note that $1$-self-annihilating coincides with self-annihilating as defined above, since $\expow^0 V^*$ is a copy of the field $\R$ of scalars: $f^*_\emptyset = 1$.} Also note that \eqref{eq:tselfanndef} implies that when $W$ is $t$-self-annihilating, then the space $W \innprod\, y^*$ is self-annihilating for every decomposable $y^* \in \expow ^{<t} V^*$.

\textcolor{black}{It is immediate from the definition of $t$-intersecting that every edge of a $t$-intersecting hypergraph must have  cardinality at least $t$. Proposition~\ref{prop:tsadegreebig} verifies a parallel property for $t$-self-annihilating spaces. }
\begin{proposition}\label{prop:tsadegreebig} \textcolor{black}{ Let $V$ be an $n$-dimensional real vector space and fix $t>0$.
When a subspace $W \subseteq \expow V$ is $t$-self-annihilating, then $W \subseteq \expow^{\geq t} V$.}
\end{proposition}
\begin{proof} \textcolor{black}{Assume not. Fix a basis $F$ for $V$ and fix $w$ such that $w \in W$, but $w \not\in \expow^{\geq t} V$. Write 
$w = \sum_{A \subseteq [n]} m_A f_A$.
Then there exists $B \subseteq [n]$ with $r=|B|<t$ and $m_B \not= 0$. It follows that $f^*_B  \in \expow^{<t} V^*$. \textcolor{fixed}{Note that $f^*_B$ is decomposable, and that~\eqref{eq:intproddef} implies that
\begin{align}\label{eq:fixer1}
w \innprod f^*_B & = \left(\sum_{A \in [n]} \alpha_A f_A \right)\innprod f^*_B
= \sum_{C \subseteq [n]\setminus B} ( \pm \alpha_{C \cup B} ) f_C 
\end{align}}
 By~\eqref{eq:extmonmult} and bilinearity, the $f_\emptyset$-term in $(w \innprod f^*_B) \wedge (w \innprod f^*_B)$ \textcolor{fixed}{is the product of the $C=\emptyset$ terms in~\eqref{eq:fixer1}, that is,} 
\begin{align*}
 \pm  (\alpha_B f_\emptyset) \wedge (\alpha_B f_\emptyset) & =  \pm \alpha_B^2 f_\emptyset\not= 0.
\end{align*}
This contradicts our hypothesis that $W$ is $t$-self-annihilating. }
\end{proof}

Theorem~\ref{thm:tintersectingspaces} generalizes Theorem~\ref{prop:extEKRspaces} to $t$-self-annihilating spaces. We postpone the proof, which is parallel to the proof of Theorem~\ref{prop:extEKRspaces} but uses some geometric properties of interior products, to the end of this section.  
\textcolor{black}{
\begin{theorem}\label{thm:tintersectingspaces} Let $F= \{f_1,\dots, f_n\}$ be a basis for an $n$-dimensional real vector space $V$ and fix $t>0$.
\begin{enumerate}[label=\textup{(\arabic*)}]
\item \label{it:setstospaces} If $\hA \subseteq 2^{[n]}$ is a $t$-intersecting hypergraph, then the corresponding monomial subspace $F(\hA) \subseteq \expow V$ is $t$-self-annihilating.
\item \label{it:spacestosets} If $W \subseteq \expow V$ is a $t$-self-annihilating subspace, then $\hH_F(W)$ is $t$-intersecting. 
\end{enumerate}
\end{theorem}
}

\textcolor{more}{Theorem~\ref{thm:tintersectingspaces} allows us to adapt extremal results on $t$-intersecting set systems to bound the dimension of $t$-self-annihilating subspaces of the exterior algebra. }
\textcolor{black}{
The question of the maximum size of an $a$-uniform $t$-intersecting family $\hA \subseteq \pown$ \textcolor{more}{was considered by Erd\H{o}s, Ko, and Rado~\cite{EKR}, who showed that for sufficiently large $n$ the answer is $\binom{n-t}{a-t}$. 
} The question was resolved for all $n$ by the Complete Intersection Theorem of Ahlswede and Khachatrian~\cite{AhKh}; it is standard to denote the function they found as $AK(n,a,t)$.  Theorem~\ref{thm:tintersectingspaces} immediately implies
\begin{theorem}\label{thm:AKspaces} When $V$ is an $n$-dimensional real vector space and $W$ is a $t$-self-annihilating subspace of $\expow^a V$, then $\dim(W) \leq AK(n,a,t)$. 
\end{theorem}}
\textcolor{black}{In Section 5 below we use Theorem~\ref{thm:AKspaces} to settle a conjecture of Gerbner, Keszegh, Methuku, Abhishek, Nagy, Patk\'{o}s,
Tompkins, and Xiao~\cite{GKMNPTX}.}

\textcolor{more}{Erd\H{o}s, Ko, and Rado~\cite{EKR} also raised the question of the maximum size of  an arbitrary $t$-intersecting family $\hA \subseteq \pown$ and conjectured an answer in the case that $t+n$ is even. Katona~\cite{Katona64} gave a full extremal characterization of such families; see Ahlswede and Khachatrian~\cite{AK05} for additional discussion. Katona's result and Theorem~\ref{thm:tintersectingspaces} immediately imply
\begin{theorem}\label{thm:Katonaspaces} Let $V$ be an $n$-dimensional real vector space and let $W$ be a $t$-self-annihilating subspace of $\expow V$.
\begin{itemize}
\item If $n+t$ is even, then 
\[
\dim W \leq \sum_{i = \frac{n+t}{2}}^n \binom{n}{i}.
\]
\item If $n+t$ is odd, then 
\[
\dim W \leq 2 \sum_{i = \frac{n+t-1}{2}}^n \binom{n-1}{i}.
\]
\end{itemize}
\end{theorem}}

\textcolor{more}{To start building towards the proof of Theorem~\ref{thm:tintersectingspaces}, we first record in  Lemma~\ref{lem:decomp} a standard fact about the geometry of interior products: 
 the interior product of decomposables is itself decomposable. See, e.g.~\cite{FultonHarris}*{Appendix B}.}

\begin{lemma}\label{lem:decomp} \textcolor{more}{Let $x \in \expow^t V$ and $y^* \in \expow^r V^*$ be non-zero decomposable eements, with $0 \leq r <t \leq n = \dim V$. Write 
$x=x_1\wedge\dots\wedge x_t$ and  $y^* = y^*_1 \wedge\dots \wedge y^*_r$.}

\textcolor{more}{Define subspaces $X,Z \subseteq V$ by 
\[
X = \myspan\{x_1\dots, x_t\} \quad \textrm{and} \quad  Z = \ker y^*_1 \cap \dots \cap \ker y^*_r.
\] Then $x \innprod y^* \in \expow^{t-r} V$ is decomposable. Furthermore, $x \innprod y^* \not= 0 $ exactly when $\dim (X \cap Z)=t-r$, and in this case, for any decomposition $x \innprod y^* = u_1\wedge \dots \wedge u_{t-r}$, we have $\myspan\{u_1,\dots, u_{t-r}\} = X \cap Z$. }
\end{lemma}

The next Lemma  will be useful for checking that particular subspaces of~$\expow V$ are $t$-self-annihilating.
\begin{lemma}\label{lem:tselfann} \textcolor{more}{Let $u = u_1 \wedge \dots \wedge u_p$ and  $w = w_1\wedge\dots\wedge w_q$ be decomposable elements of $\expow^p V$, $\expow^q V$, respectively, and let $U =\myspan\{u_1,\dots, u_p\}$ and $W = \myspan\{w_1,\dots,w_q\}$ be the corresponding subspaces of $V$. If $\dim (U \cap W) =t$ and $y^* = y_1\wedge \dots \wedge y_r$ is a decomposable element of $\expow^r V^*$, where $r<t$, then 
\[
(u \innprod y^*) \wedge (w \innprod y^*) = 0. 
\]}
\end{lemma}
\begin{proof} Let $Z = \ker y^*_1 \cap \dots \cap \ker y^*_r$. When $y^* \not=0$, then $\dim Z = n-r$.  By Lemma~\ref{lem:decomp}, it will suffice to show that $(U \cap Z) \cap (W \cap Z) \not=  \{0\}$, since then we can decompose $u \innprod y^*$ and $w \innprod y^*$ to each have a non-zero vector in that intersection as a wedge factor. 
However, 
\[
\dim ((U \cap W) \cap Z) +  \dim (\myspan\{U \cap W, Z\}) = \dim (U \cap W) + \dim Z = t + (n-r)
\]
implies that $\dim ((U \cap W) \cap Z) \geq t -r >0$, and $(U \cap W) \cap Z$ is a subspace of both $ U \cap Z$ and $W \cap Z$. 
\end{proof}

\begin{proof}[Proof of Theorem~\ref{thm:tintersectingspaces}]
\textcolor{black}{For~\ref{it:setstospaces},} \textcolor{more}{let $v, w \in F(\hA)$, where $\hA$ is a $t$-interesecting hypergraph, and let $y^* \in \expow^r V^*$, where $r<t$. 
Expand 
\[
v = \sum_{A \in \hA} \alpha_A f_A, \qquad w = \sum_{A \in \hA} \beta_A f_A.
\]
Then, by bilinearity,
\begin{align*}
(u \innprod y^*) \wedge (w \innprod y^*) & = \sum_{A \in \hA} \sum_{B \in \hA}
\alpha_A \beta_B\, (f_A \innprod y^*) \wedge (f_B \innprod y^*).
\end{align*}
By the $t$-intersecting property of $\hA$ and Lemma~\ref{lem:tselfann}, each term of this sum is zero. }

\textcolor{black}{The proof of~\ref{it:spacestosets} is a little more involved. Assume, looking for a contradiction, that for some nonzero $u, w \in W $, we have $|A \cap B| <t$, where $A = \ins_F(u)$ and $B=\ins_F(w)$. Set $D = A \cap B$.  
We expand both $u$ and $w$ in the basis $\Ffull$: 
\begin{equation}\label{eq:expandfull}
u = \sum_{C \subseteq [n]} \alpha_C f_C, \qquad w = \sum_{C \subseteq [n]} \beta_C f_C,
\end{equation}
and note that $\alpha_A$ and $\beta_B$ are both non-zero. Because $W$ is $t$-self-annihilating, $|D|<t$, \textcolor{more}{and $f^*_D$ is decomposable}, we know
\begin{equation}\label{eq:tself}
( u \innprod f^*_D  ) \wedge (w\innprod f^*_D ) = 
\left(\sum_{C \subseteq [n]} \alpha_C (f_C   \innprod f^*_D )\right) \wedge \left(\sum_{C \subseteq [n]} \beta_C (f_C  \innprod f^*_D) \right )= 0.
\end{equation}
The term $\alpha_A \beta_B (f_A \innprod f^*_D) \wedge (f_B \innprod f^*_D)$ of~\eqref{eq:tself} formed by the initial terms in~\eqref{eq:expandfull} is non-zero, since by~\eqref{eq:intproddef} it is a non-zero scalar multiple of $f_{A\setminus D} \wedge f_{B\setminus D} = \pm f_{(A \cup B) \setminus D}$. Hence there must exist a different pair of sets $A',B' \subseteq [n]$ such that the corresponding term $\alpha_{A'} \beta_{B'} (f_{A'} \innprod f^*_D) \wedge (f_{B'} \innprod f^*_D)$ in~\eqref{eq:tself} is also a non-zero scalar multiple of $f_{(A \cup B) \setminus D}$, which implies the following four conditions are all satisfied:
\begin{enumerate}[label={\textup{(\roman*)}}] 
\item $\alpha_{A'}\not=0$ and $\beta_{B'}\not=0$,
\item  $D \subseteq A'$ and $D \subseteq B'$ (to survive the contraction with $f^*_D$),
\item $(A' \setminus D) \cap (B'\setminus D) = \emptyset$ (to survive the wedge product), and
\item $(A' \cup B') \setminus D = (A \cup B) \setminus D$.
\end{enumerate}
Because $A$ and $B$ are the initial sets for $u$,$w$ respectively,  $|A| \geq |A'|$ and $|B| \geq |B'|$. Then (iii) and (iv) above imply that $|A'|=|A|$ and $|B'|=|B|$.}

Let $A_0 = (A  \cap A')\setminus D$,  $B_0=(B \cap B')\setminus D$, $X = (A\cap B')\setminus D$, and $Y = (B  \cap A')\setminus D$.
This gives disjoint decompositions
\begin{align*}
 A & = A_0 \cup D \cup X, \quad &  B  &= B_0 \cup D \cup Y, &\\
 A' & = A_0 \cup D \cup Y, \quad &  B' &=  B_0 \cup D \cup X,&
\end{align*}
so by~\eqref{eq:colex} 
\[
A > A' \iff X >Y \iff B'>B,
\]
contradicting either $A=\ins_F(u)$ or $B=\ins_F(w)$. 
\end{proof}

%
%

\section{Generic linear projections}
\label{sec:projections}

In this section, we will be interested in the behaviour of subspaces $W$ of $\expow^rV$ under projections and under the operation of wedging with exterior powers of $V$.  In both cases, we will want bounds on the dimension of the resulting subspace.   Note that projections change the dimension of the underlying space, while wedging with an exterior power lifts $W$ from $\expow^rV$ to a higher exterior power.

Our proofs will use suitably generic subspaces of $V$: we show the existence of such subspaces in 
section~\ref{sec:genericproj}  and prove our bounds on the dimensions of subspaces in section~\ref{sec:dimfracs}.

\subsection{Generic projections}\label{sec:genericproj}
Throughout this section, let $V= \R^N$. We find conditions that guarantee the existence of bases of $V$ that behave generically with respect to projections of given configurations of subspaces.  In all cases we find a nonempty Zariski open subset of $\GL_N(\R)$ having the desired properties (it makes no significant difference to the final results if we instead use the condition that our sets have complement with Lebesgue measure zero).

Let $F = (f_{ij})=\left(f_1| f_2 | \dots | f_N\right)\in \GL_N(\R)$;
i.e.\ $F$ is an $N \times N$ matrix with entries $f_{ij}$ and columns $f_j$.
For $J \subseteq [N]$, let $V_J = \myspan\{f_j\,:\, j \in J \}$, and define the linear projection $\pi^F_J: V \rightarrow V_J$ by
\begin{equation}\label{eq:pidef}
\pi^F_J \left( \sum_{j \in [N]} \alpha_j f_j\right) = \sum_{j \in J} \alpha_j f_j.
\end{equation}

For a subspace $C$ of $V$ and a set $J\subseteq[N]$, we clearly have
$\dim(\pi_J^F(C))\le\min\{\dim C,|J|\}$.  We will show that, for typical choices of $F$, this holds with equality.
The proof of Lemma \ref{lem:nocollapse} follows Frankl and Tokushige~\cite{FT}*{Lemma 26.14}.  

\begin{lemma}\label{lem:nocollapse} Let $C_1,\dots, C_m$ be proper linear subspaces of $V$. Then there exists a non-zero polynomial $G$ in the $N^2$ variables $f_{ij}$, $1 \leq i,j \leq N$, such that $G(F) \not= 0$ implies that $F = (f_{ij}) \in \GL_N(\R)$  and
\[
\dim \pi^F_J(C_i) = \min\{\dim C_i, |J|\}
\]
for all $1 \leq i \leq m$ and $J \subseteq [N]$.
\end{lemma}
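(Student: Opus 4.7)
The plan is, for each pair $(i,J)$, to encode the condition $\dim \pi^F_J(C_i) = \min\{\dim C_i, |J|\}$ as the non-vanishing of an explicit polynomial in the entries of $F$ that is not identically zero, and then take $G$ to be the product of these polynomials together with $\det(F)$; the $\det F$ factor handles invertibility and each of the others handles one rank condition.

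Fix $(i,J)$ and write $k_i = \dim C_i$, $J^c = [N]\setminus J$, $\ell = |J^c|$, and $r_{i,J} = \min\{k_i + \ell,\, N\}$. Since $\myker \pi^F_J = V_{J^c}$,
\[
\dim \pi^F_J(C_i) \;=\; k_i - \dim(C_i \cap V_{J^c}),
\]
so the required equality is equivalent to $\dim(C_i + V_{J^c}) = r_{i,J}$. Fixing a basis $c_{i,1},\dots,c_{i,k_i}$ of $C_i$, let $M_{i,J}(F)$ be the $N \times (k_i + \ell)$ matrix whose columns are the standard coordinates of $c_{i,1},\dots,c_{i,k_i}$ followed by the columns $f_j$ of $F$ for $j \in J^c$. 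The required equality becomes $\rank M_{i,J}(F) = r_{i,J}$, which is the non-vanishing of at least one of the finitely many $r_{i,J} \times r_{i,J}$ minors of $M_{i,J}(F)$; each such minor is a polynomial in the $f_{ij}$.

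The crux is verifying that at least one of these minor polynomials is not identically zero, which I would do by exhibiting, for each $(i,J)$, a single invertible $F$ realizing $\rank M_{i,J}(F) = r_{i,J}$. Using that $C_i$ is proper ($k_i < N$), extend $c_{i,1},\dots,c_{i,k_i}$ to a basis $b_1 = c_{i,1},\dots,b_{k_i} = c_{i,k_i},b_{k_i+1},\dots,b_N$ of $V$ and choose a bijection $\sigma : [N] \to [N]$ satisfying $\sigma(J^c) \subseteq \{k_i+1,\dots,N\}$ when $k_i + \ell \leq N$, or $\sigma(J^c) \supseteq \{k_i+1,\dots,N\}$ when $k_i + \ell > N$; an elementary count of set sizes shows such a $\sigma$ exists in either case. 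Setting $f_j = b_{\sigma(j)}$ produces an invertible $F$ for which $\{c_{i,1},\dots,c_{i,k_i}\} \cup \{f_j : j \in J^c\}$ is either a linearly independent list of size $k_i+\ell$ (first case) or a spanning set of $V$ (second case), so $\rank M_{i,J}(F) = r_{i,J}$. Choose any minor of $M_{i,J}$ that is nonzero at this $F$ and let $G_{i,J}$ be the corresponding polynomial in the $f_{ij}$.

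Finally, set $G(F) = \det(F)\prod_{i,J} G_{i,J}(F)$; this is a product of nonzero polynomials, hence nonzero. Any $F$ with $G(F) \neq 0$ satisfies $F \in \GL_N(\R)$ and $\rank M_{i,J}(F) \geq r_{i,J}$ for every $(i,J)$, which combined with the trivial upper bound gives equality and hence the desired dimension identities. I expect the existence step in the previous paragraph to be the main obstacle: everything else is essentially bookkeeping, and the real content is that properness of each $C_i$ provides enough freedom to make $V_{J^c}$ transverse to $C_i$ (or span a complement of $C_i$) while keeping $F$ invertible.
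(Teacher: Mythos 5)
Your proposal is correct and follows essentially the same route as the paper: the same matrices $M_{i,J}$ whose columns are a basis of $C_i$ together with the $f_j$ for $j\notin J$, a full-size minor $G_{i,J}$ as the polynomial witness for the rank condition, and $G=\det(F)\prod_{i,J}G_{i,J}$. The only difference is cosmetic: where the paper specifies the minor structurally and asserts non-triviality, you exhibit an explicit invertible witness $F$ (a permutation of an extended basis) and pick a minor nonvanishing there, which fills in that detail a bit more explicitly.
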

\begin{proof} 
The key idea is to write down a polynomial witnessing that $\pi_J^F(C_i)$ has maximum possible rank. 
Let $d_i=\dim C_i$. For each $1 \leq i \leq m$ and $J \subseteq [N]$, let $M_{i,J}$ be an $N$ by $d_i+(N-|J|)$ matrix built by taking $d_i$ columns forming a basis for $C_i$, together with the $N-|J|$ columns $f_j$, where $j \in [N] \setminus J$. We choose $G_{i,J}$ to be a minor of $M_{i,J}$ that can witness $M_{i,J}$ having full rank. More precisely:
\begin{itemize}
\item If $d_i \geq |J|$, let $G_{i,J}$ be an $N \times N$ minor including all $C_i$-basis columns, together with any choice of $N-d_i \leq N-|J|$ columns $f_j$. 
\item Otherwise $d_i< |J|$. In this case there is a collection of $d_i$ rows such that the restriction of the $C_i$ basis to those rows is still linearly independent. Let $G_{i,J}$ be any $d_i + N-|J|$ by $d_i+N-|J|$ minor of $M_{i,J}$ including those $d_i$ rows (and all $d_i$ columns from the basis for $C_i$).
\end{itemize}
Note that $V_{[N]\setminus J}$ is the kernel of $\pi^F_J$. By construction, $G_{i,J} \not= 0$ implies that $\dim \myspan\{C_i,V_{[N]\setminus J}\} = \min\{N, d_i+N-|J|\}$, and thus immediately that $\dim (C_i \cap V_{[N]\setminus J}) = \max \{d_i-|J|,0\}$
and $\dim(\pi^F_J(C_i)) = \min (\dim C_i, |J|)$.

Finally, set $\displaystyle{G = (\det F) \prod_{1 \leq i \leq m,\ J \subseteq [N]} G_{i,J}}$.
We note that $G$ is not the zero polynomial, as for each $i$ and $J$ there are choices of $F$ for which the matrix $M_{i,J}$  has full rank.
\end{proof}

We need an analogous result for subspaces of $\expow^r V$.  This is more difficult than for subspaces of $V$, as the subspace structure of $\expow^rV$ interacts with the exterior algebra structure.

\begin{lemma}\label{lem:constantproj} Let $W \subseteq \expow^r V$ be a linear subspace. For $1\leq m \leq N-1$, set
\[
t_m = \max_{J,F} \dim \pi^F_J (W),
\]
where the maximum is taken over all $J \in \sbsts{N}{m}$ and $F \in \GL_N(\R)$. 
Then there exists a non-zero polynomial $H$ in the $N^2$ variables $f_{ij}$, $ 1 \leq i,j \leq N$, such that $H(F) \not= 0$ implies that $F = (f_{ij}) \in \GL_N(\R)$  and
\[
\dim \pi^F_J(W) = t_m
\]
for all $J \in \sbsts{N}{m}$. 
\end{lemma}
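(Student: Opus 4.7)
The plan is to express the set of $F$ achieving the conclusion as a finite intersection of Zariski open subsets of $\R^{N^2}$, each nonempty thanks to a column-permutation symmetry; since $\R^{N^2}$ is irreducible, this intersection is itself a nonempty Zariski open set, hence the complement of the zero locus of a single polynomial. To set this up, fix a basis $w_1,\dotsc,w_d$ of $W$ with $d=\dim W$ and express each $w_i$ in the standard basis of $\expow^r V$. For a given $J$, the matrix (in that basis) of the induced map $\expow^r \pi^F_J$ has entries that are rational in the $f_{ij}$ with denominator a power of $\det F$, since $\pi^F_J = F\chi_J F^{-1}$ for the diagonal indicator $\chi_J$; the $t_m \times t_m$ minors of the $d$-row matrix representing $\pi^F_J(w_1),\dotsc,\pi^F_J(w_d)$ therefore become honest polynomials in the $f_{ij}$ after multiplying through by a sufficient power of $\det F$. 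The condition $\dim \pi^F_J(W) \geq t_m$ is equivalent to the non-vanishing of at least one such minor and thus defines a Zariski open subset of $\R^{N^2}$.

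The crux is to verify that this open subset is nonempty for every $J$, not only for the pair $(F_0,J_0)$ that realizes the maximum $t_m$. Given any $J \in \sbsts{N}{m}$, choose a permutation matrix $P$ so that the columns of $F_0 P$ indexed by $J$ coincide (as a set) with the columns of $F_0$ indexed by $J_0$, and similarly for the complements. Then $V_J$ and $V_{[N]\setminus J}$ built from $F_0 P$ literally equal $V_{J_0}$ and $V_{[N]\setminus J_0}$ built from $F_0$ as subspaces of $V$; hence $\pi^{F_0 P}_J = \pi^{F_0}_{J_0}$ as linear maps on $V$, and therefore as maps on $\expow^r V$ as well. This yields $\dim \pi^{F_0 P}_J(W) = t_m$, so for each $J$ there is at least one $t_m \times t_m$ minor $H_J$ (regarded as a polynomial in the $f_{ij}$ after clearing denominators) that does not vanish identically.

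Finally, set
$$H \;=\; (\det F) \prod_{J \in \sbsts{N}{m}} H_J.$$
Since $\R[f_{ij}]$ is an integral domain, $H$ is a nonzero polynomial. If $H(F) \neq 0$, then $\det F \neq 0$, so $F \in \GL_N(\R)$, and each $H_J(F) \neq 0$ forces $\dim \pi^F_J(W) \geq t_m$ for every $J$; combined with the reverse inequality built into the definition of $t_m$ as a maximum, this gives equality. The main obstacle here is conceptual rather than computational: without the permutation argument, only the single minor $H_{J_0}$ would be known to be a nonzero polynomial, and the product construction would collapse. The remaining manipulation of minors and determinants parallels the proof of Lemma~\ref{lem:nocollapse}.
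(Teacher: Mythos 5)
Your proof is correct, and its overall skeleton matches the paper's: for each $J$ you produce a single polynomial certificate $H_J$ whose nonvanishing (together with $\det F\neq 0$) forces $\dim\pi^F_J(W)\ge t_m$, you use column-permutation symmetry together with a maximizing pair $(F_0,J_0)$ to see that each $H_J$ is not identically zero, and you take $H=(\det F)\prod_J H_J$, finishing by the maximality of $t_m$. The differences are in how the certificate is built and how the symmetry is deployed. The paper never introduces rational functions: it encodes $\dim\pi^F_J(W)$ via the kernel, forming a matrix whose columns are the (constant) coordinates of a basis of $W$ together with the Pl\"ucker coordinates of the vectors $f_K$ with $K\not\subseteq J$, which span $\ker\pi^F_J$ on $\expow^r V$; the rank condition is then a polynomial minor from the start, and nonvanishing for general $J$ is obtained by applying the column permutation as a ring automorphism to the specific minor $H_{J^*}$. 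You instead work with the image directly, writing $\pi^F_J=F\chi_J F^{-1}$ so that the induced map on $\expow^r V$ has entries with denominator a power of $\det F$, and you clear denominators; and you use the permutation at the level of points, observing that $\pi^{F_0P}_J=\pi^{F_0}_{J_0}$ because the projection depends only on the complementary pair of subspaces $(V_J, V_{[N]\setminus J})$, which hands you a witness point for every $J$. Both devices are sound: your compound-matrix formulation is conceptually direct, at the cost of denominator bookkeeping (harmless, since your final $H$ contains the factor $\det F$), while the paper's kernel construction keeps everything polynomial and makes explicit which columns enter the minor. One small remark: your opening appeal to intersections of nonempty Zariski open sets in the irreducible space $\R^{N^2}$ is not needed once you write $H$ as a product of nonzero polynomials in the integral domain $\R[f_{ij}]$, which is exactly how you conclude anyway.
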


\begin{proof} Fix $m$. Let $d= \dim(W) \geq t_m$, and choose $J^* \in \sbsts{N}{m}$ and $F^* \in \GL_N(\R)$ to realize 
\begin{equation}\label{eq:stardim}
\dim \pi^{F^*}_{J^*} = t_m.
\end{equation}
Let $\{w_1,\dots, w_d\}$ be a basis of $W$ such that $\pi^{F^*}_{J^*}(w_1), \dots, \pi^{F^*}_{J^*}(w_{t_m})$ are linearly independent in $V_{J^*}=\pi^{F^*}_{J^*}(V)$. 

Build an $\binom{N}{r}$ by $\binom{N}{r}-\binom{m}{r}+d$  matrix $M_{J^*}$ by taking the standard coordinates of $w_1,\dots, w_d$ for the first $d$ columns, and the standard Pl\"{u}cker
 coordinates of the vectors $f_K$, where $K \in \sbsts{N}{r} \setminus \sbsts{J}{r}$, as the rest of the columns (the entries in these columns are degree-$r$ polynomials in the variables
 $f_{ij}$). For any $F \in \GL_N(\R)$, the $f_K$-columns of $M_{J^*}$ form a basis for $\ker \pi^{F}_{J^*}$. By~\eqref{eq:stardim}, for the specific basis $F^*$ we have
\[
\dim \left( W \cap \ker \pi^{F^*}_{J^*}\right) = d-t_m, 
\]
and thus, when $F=F^*$,
\[ 
\rank M_{J^*} = \binom{N}{r}-\binom{m}{r} +d  - (d-t_m)=\binom{N}{r}-\binom{m}{r} +t_m.
\]
It follows that there exists a non-zero $\binom{N}{r}-\binom{m}{r} +t_m$ by $\binom{N}{r}-\binom{m}{r} +t_m$ minor of $M_{J^*}$; call this polynomial $H_{J^*}$. By our choice of basis for $W$, we can require that the columns included in that minor are $w_1, \dots, w_{t_m}$, together with all 
of the $f^*_K$-columns (note that $\pi_{J^*}^{F^*}(w_1,),\dots,\pi_{J^*}^{F^*}(w_{t_m})$ are linearly independent and the
vectors $f_K$ lie in $\ker \pi_{J^*}^{F^*}(w_i)$).
Since $H_{J^*}$ is non-zero for the specific basis $F^*$, it must in fact be a non-zero polynomial in the variables $f_{ij}$. Furthermore, whenever $H_{J^*}(F) \not= 0$, it is true that $\dim \pi^F_{J^*}(W) = t_m$.

We have found a suitable polynomial witness $H_{J^*}$ for a particular $J^* \in \sbsts{N}{m}$. Let $J \in \binom{[N]}{m}$ be arbitrary, and fix a permutation $\sigma: [N] \rightarrow [N]$ with $\sigma(J^*)=J$. If we take $\sigma$ to act on the columns of $F$, it induces a permutation of the variables $f_{ij}$ (we set
$\sigma(f_{ij}) = f_{i \sigma(j)}$)
and thus an automorphism of the polynomial ring generated by the $f_{ij}$'s. 

Consider the matrix $\sigma(M_{J^*})$, by which we mean the matrix resulting when  this polynomial automorphism is applied to the entries of $M_{J^*}$. The $w_i$-columns are unchanged. For $K = \{k_1, \dots, k_r\} \in \sbsts{N}{r}$ we have 
\begin {align*}
\sigma(f_K) & = \sigma ( f_{k_1} \wedge \dots \wedge f_{k_r})\\
& = \sigma(f_{k_1}) \wedge \dots \wedge \sigma (f_{k_r}) \\
& = f_{\sigma(k_1)} \wedge \dots \wedge f_{\sigma(k_r)} = \pm f_{\sigma(K)}.
\end{align*} 
By our choice of the permutation $\sigma$, we have  $ K \not\subseteq J^*$ exactly when $\sigma(K) \not\subseteq J$, so the columns of $\sigma(M_{J^*})$ are a basis for $\ker \pi^F_J$. Finally, set $H_J = \sigma(H_{J^*})$. Then $H_J$ is a non-zero polynomial. It is also a $\binom{N}{r}-\binom{m}{r} +t_m$ by $\binom{N}{r}-\binom{m}{r} +t_m$ minor of the matrix $\sigma(M_{J^*})$. When $H_J(F) \not= 0$, then $\dim \pi^F_J(W) \geq t_m$. Since $t_m$ was chosen to be the maximum possible dimension of a projection of $W$ onto an $m$-dimensional subspace of $V$, in fact $H_J \not= 0$ implies $\dim \pi^F_J(W) = t_m$.

Finally, take $H$ to be the product of $\det F$ and all the $H_J$'s found by the process described above, as $m =|J|$ varies from 1 to $N-1$. 
\end{proof}

%
%

\subsection{Dimensional fractions}\label{sec:dimfracs}

Let $V=\R^n$ and let $W$ be a subspace of $\expow^rV$.   We will prove bounds on the size of subspaces obtained from projecting $W$ onto a subspace of $V$, or wedging with an exterior power of $V$.  Our measure of size will be the \textit{dimensional fraction} 
\[
\frac{\dim W}{\dim \expow^r V} =\frac{\dim W}{\binom{n}{r}}
\]
occupied by a subspace $W \subseteq \expow^r V$. 

Let us begin with projections: our first goal will be to show that there exist projections that preserve the dimensional fraction.  It will be helpful to  consider projections alongside an analogous operation on hypergraphs:
for an $a$-uniform hypergraph  $\hA \subseteq \sbsts{n}{a}$ and $B \in \sbsts{n}{a+b}$, define the \textit{restriction} $\rho_B(\hA) = \{A \in \hA\,:\,A \subseteq B \}$ (i.e.~the subgraph induced by $B$).   The connection between projections and restrictions is given by
$$\pi_B(F(\mathcal A))=F(\rho_B(\mathcal A));$$
in other words, projecting a monomial space on to the subspace generated by $\{f_i:i\in B\}$ corresponds to taking the restriction of the corresponding hypergraph to $B$. 

\textcolor{black}{We define the $\textit{density}$ of an $r$-uniform hypergraph $\hA \subseteq \sbsts{n}{r}$ to be $|A|/\binom{n}{r}$.} The following simple lemma shows that uniform hypergraphs have projections that preserve density.

\begin{lemma}\label{lem:hyperproj} Fix non-negative $a,b,n$ with $a\leq b \leq n$. Let $\hA \subseteq \sbsts{n}{a}$ be an $a$-uniform hypergraph. Then
\[
\max_{B \in \sbsts{n}{b}} \frac{|\pi_B(\hA)|}{\binom{b}{a}} \geq \frac{|\hA|}{\binom{n}{a}}. 
\]
\end{lemma}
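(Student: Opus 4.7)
My plan is a standard double-counting / averaging argument based on the identity
\[
\binom{n}{a}\binom{n-a}{b-a}=\binom{n}{b}\binom{b}{a},
\]
which just counts pairs $(A,B)$ with $A\subseteq B\subseteq[n]$, $|A|=a$, $|B|=b$ in two ways.

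First I would observe that by the explicit description $\rho_B(\hA)=\{A\in\hA:A\subseteq B\}$ (which is the relevant operation given the preceding paragraph identifying $\pi_B$ on a monomial space with restriction of the hypergraph), we have
\[
\sum_{B\in\sbsts{n}{b}}|\rho_B(\hA)|
=\#\{(A,B):A\in\hA,\,B\in\sbsts{n}{b},\,A\subseteq B\}
=|\hA|\binom{n-a}{b-a},
\]
since each $A\in\hA$ is contained in exactly $\binom{n-a}{b-a}$ sets $B\in\sbsts{n}{b}$.

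Next, the maximum of $|\rho_B(\hA)|$ over $B$ is at least its average over $\sbsts{n}{b}$, so
\[
\max_{B\in\sbsts{n}{b}}|\rho_B(\hA)|
\ \geq\ \frac{|\hA|\binom{n-a}{b-a}}{\binom{n}{b}}.
\]
Dividing by $\binom{b}{a}$ and using the identity above to rewrite $\binom{n-a}{b-a}/\bigl(\binom{n}{b}\binom{b}{a}\bigr)=1/\binom{n}{a}$ gives the required inequality.

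There is no real obstacle here; the only thing to be slightly careful about is that the statement uses $\pi_B$ where I would expect $\rho_B$ (the restriction just defined in the previous paragraph), but the translation $\pi_B(F(\hA))=F(\rho_B(\hA))$ makes the two counts agree on the nose, so the averaging argument carries through either way.
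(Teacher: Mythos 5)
Your proof is correct and is essentially the paper's own argument: the same double count of pairs $(A,B)$ with $A\in\hA$, $A\subseteq B$, followed by the max-at-least-average step and the identity $\binom{n}{a}\binom{n-a}{b-a}=\binom{n}{b}\binom{b}{a}$. Your remark that the statement's $\pi_B$ should be read as the restriction $\rho_B$ (via $\pi_B(F(\hA))=F(\rho_B(\hA))$) is also consistent with how the paper uses the notation.
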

\begin{proof}
Count pairs $(A,B)$ with $A \in \hA$, $B \in \sbsts{n}{b}$, and $A \subseteq B$:
\begin{equation*}
|\hA| \binom{n-a}{b-a} = \sum_{B \in \sbsts{n}{b}} | \pi_B(\hA) | \leq \binom{n}{b} \max_{B \in \sbsts{n}{b}} |\pi_B(\hA)|.
\end{equation*}
The first expression follows from choosing $A$ first; the second, from choosing $B$ first. Then divide by $\binom{n}{b}\binom{b}{a} = \binom{n}{a}\binom{n-a}{b-a}$.  Alternatively, simply note that, choosing a $b$-set $B$ uniformly at random, the expected number of edges in the restriction $\pi_B(\mathcal A)$ is
$(\binom ba/\binom na)|\mathcal A|$.
\end{proof}

Let us show that the bound of Lemma \ref{lem:hyperproj} implies a corresponding bound for dimensional fractions of projections. 
Fix $F \in \GL_n(V)$. For $J \subseteq [n]$, define the projection $\pi^F_J: V \rightarrow V_J$ by~\eqref{eq:pidef}. 
Abusing notation, we also write $\pi^F_J: \expow^r V \rightarrow \expow^r V_J$ for the linear map  
defined by $\pi^F_J(f_A) = \expow_{a \in A} \pi^F_J(f_a)$. 
Note that 
\begin{equation}\label{eq:monoproj}
\pi^F_J(f_A) = \begin{cases}
f_A & A \subseteq J, \\
0 & \textrm{otherwise.}
\end{cases}
\end{equation}

\begin{lemma}\label{prop:extproj}
Suppose that $0<r\le n-d$. Let $W \subseteq \expow^r V$ be a linear subspace and $F \in \GL_n (\R)$.  Then
\[
\max_{J \in \sbsts{n}{n-d}}\frac{\dim \pi^F_{J}(W)}{\binom{n-d}{r}} \geq \frac{\dim W}{\binom{n}{r}}.
\]
\end{lemma}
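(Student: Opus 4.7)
The plan is to reduce the problem to the hypergraph statement in Lemma~\ref{lem:hyperproj} via the initial hypergraph construction. Let $\hA = \hH_F(W) \subseteq \sbsts{n}{r}$, so that $|\hA| = \dim W$ by Lemma~\ref{lem:inithyp}\ref{it:dim}. The key observation is that projection interacts well with the initial-set operation: if $w \in W$ is nonzero with $\ins_F(w) = A \subseteq J$, then when we expand $w = \sum_B m_B f_B$ and apply $\pi_J^F$, equation~\eqref{eq:monoproj} shows that the term $m_A f_A$ survives while all terms $f_B$ with $B \not\subseteq J$ are killed. In particular $\pi_J^F(w) \neq 0$, and since $A$ was the reverse-colex maximum of the support of $w$, it remains the maximum among surviving terms; hence $\ins_F(\pi_J^F(w)) = A$.

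Now for each $A \in \rho_J(\hA) = \{A \in \hA : A \subseteq J\}$, pick a witness $w_A \in W$ with $\ins_F(w_A) = A$. By the previous paragraph, $\pi_J^F(w_A) \neq 0$ and the vectors $\{\pi_J^F(w_A) : A \in \rho_J(\hA)\}$ have pairwise distinct initial sets within $\expow^r V_J$, so they are linearly independent. Therefore
\[
\dim \pi_J^F(W) \geq |\rho_J(\hA)|
\]
for every $J \in \sbsts{n}{n-d}$.

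To finish, apply Lemma~\ref{lem:hyperproj} to $\hA$ with $a = r$ and $b = n-d$ (the hypothesis $r \leq n-d$ guarantees $a \leq b \leq n$): there exists $J \in \sbsts{n}{n-d}$ with
\[
\frac{|\rho_J(\hA)|}{\binom{n-d}{r}} \geq \frac{|\hA|}{\binom{n}{r}} = \frac{\dim W}{\binom{n}{r}}.
\]
Combining the two inequalities yields the desired bound on $\dim \pi_J^F(W)/\binom{n-d}{r}$.

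There is no serious obstacle here; the only care needed is in verifying step two, namely that projection cannot push the initial set upward (it can only kill or preserve basis monomials, never introduce new ones), so that the initial set of $\pi_J^F(w)$ is exactly $\ins_F(w)$ when the latter lies in $J$. This is precisely what makes the reverse colex ordering the right tool: the surviving support of $\pi_J^F(w)$ is a subset of the support of $w$, so its maximum lies below $\ins_F(w)$ in reverse colex, and equality holds exactly when $\ins_F(w) \subseteq J$.
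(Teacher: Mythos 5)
Your proof is correct and follows essentially the same route as the paper: both establish the containment $\rho_J(\hH_F(W)) \subseteq \hH_F(\pi^F_J(W))$ (via \eqref{eq:colex} and \eqref{eq:monoproj}, which is exactly your observation that projection only deletes monomials and so preserves the initial set when it lies inside $J$) and then combine Lemma~\ref{lem:inithyp} with the counting bound of Lemma~\ref{lem:hyperproj}. You have simply spelled out the containment and the linear-independence step in more detail than the paper does.
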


\begin{proof}  Let $J \in \sbsts{n}{n-d}$. By equations~\eqref{eq:colex} and~\eqref{eq:monoproj}, the restriction $\rho_J(\hH(W))$ of the initial hypergraph of $W$ is contained in the initial hypergraph of the projection $\pi^F_J(W)$. That is, 
$ \rho_J(\hH(W)) \subseteq \hH(\pi^F_J(W))$. 
By Lemmas~\ref{lem:inithyp}  and \ref{lem:hyperproj},
\begin{align*}
\max_{J \in \sbsts{n}{n-d}}\frac{\dim \pi^F_{J}(W)}{\binom{n-d}{r}} 
& = \max_{J \in \sbsts{n}{n-d}}\frac{|\hH(\pi^F_{J}(W))|}{\binom{n-d}{r}}\\
& \geq \max_{J \in \sbsts{n}{n-d}}\frac{|\rho_{J}(\hH(W))|}{\binom{n-d}{r}}\geq \frac{\hH (W)}{\binom{n}{r}} =\frac{\dim W}{\binom{n}{r}}.
\end{align*}
\end{proof}

The existence of generic subspaces implies that a typical projection achieves the bound of Lemma \ref{prop:extproj}:

\begin{corollary}\label{cor:genericproj} Fix $ 0 < d \leq n$. Let $W \subseteq \expow^r V$ be a linear subspace and $F \in \GL_n (\R)$. Then there exists a nonempty Zariski open set of $F \in \GL_n(\R)$ satisfying  
\[
\frac{\dim \pi^F_{[n-d]}(W)}{\binom{n-d}{r}} \geq \frac{\dim W}{\binom{n}{r}}.
\]
\end{corollary}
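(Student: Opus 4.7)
The plan is to combine Lemmas~\ref{prop:extproj} and~\ref{lem:constantproj} in a straightforward way. Lemma~\ref{prop:extproj} asserts that, for \emph{any} fixed $F$, \emph{some} index set $J \in \sbsts{n}{n-d}$ yields a projection $\pi^F_J$ with the desired dimensional fraction. Lemma~\ref{lem:constantproj} asserts conversely that for \emph{generic} $F$, \emph{every} $\pi^F_J$ with $|J|=n-d$ attains the maximum possible projection dimension $t_{n-d}$. The two together pin down the projection onto the specific coordinate subspace $V_{[n-d]}$.

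First I would apply Lemma~\ref{prop:extproj} with some (any) choice of $F$ to extract an index set $J^* \in \sbsts{n}{n-d}$ satisfying
\[
\dim \pi^F_{J^*}(W) \ \geq\ \frac{\binom{n-d}{r}}{\binom{n}{r}}\,\dim W.
\]
Since $t_{n-d}$ is defined as the maximum of $\dim \pi^F_J(W)$ over all $F \in \GL_n(\R)$ and all $J \in \sbsts{n}{n-d}$, this immediately gives
\[
t_{n-d} \ \geq\ \frac{\binom{n-d}{r}}{\binom{n}{r}}\,\dim W.
\]

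Next I would invoke Lemma~\ref{lem:constantproj} with $m = n-d$: there is a nonzero polynomial $H$ in the entries $f_{ij}$ such that, whenever $H(F) \neq 0$, the basis $F$ lies in $\GL_n(\R)$ and $\dim \pi^F_J(W) = t_{n-d}$ for \emph{all} $J \in \sbsts{n}{n-d}$. Taking $J = [n-d]$ in particular, any $F$ in the nonempty Zariski open set $\{H \neq 0\}$ satisfies
\[
\dim \pi^F_{[n-d]}(W) \ =\ t_{n-d} \ \geq\ \frac{\binom{n-d}{r}}{\binom{n}{r}}\,\dim W,
\]
which is the desired inequality after dividing by $\binom{n-d}{r}$.

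There is essentially no obstacle: the proof is a bookkeeping combination of the two preceding lemmas. The only point requiring mild care is to keep the two roles of $F$ straight, since Lemma~\ref{prop:extproj} holds for \emph{every} $F$ (and thus forces $t_{n-d}$ to be large) while Lemma~\ref{lem:constantproj} only guarantees the uniform value $t_{n-d}$ for $F$ in a Zariski open set; these open sets are transparently compatible because $\GL_n(\R)$ is irreducible.
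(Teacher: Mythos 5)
Your proposal is correct and follows essentially the same route as the paper: the paper's own proof likewise invokes Lemma~\ref{lem:constantproj} to get a Zariski open set of $F$ on which $\dim \pi^F_J(W)$ depends only on $|J|$ (hence equals the maximum $t_{n-d}$), and then Lemma~\ref{prop:extproj} to bound that maximum from below by $\bigl(\binom{n-d}{r}/\binom{n}{r}\bigr)\dim W$. The only cosmetic difference is that you apply Lemma~\ref{prop:extproj} to an arbitrary auxiliary $F$ and pass through the global maximum $t_{n-d}$, while the paper applies it directly to the generic $F$; both are valid and equivalent.
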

\begin{proof} By Lemma~\ref{lem:constantproj}, for all $F$ outside of the zero set of a particular polynomial, the dimension of $\pi^F_J(W)$ depends only on $|J|$, and thus $\dim \pi^F_{[n-d]}(W) = \max_{J \in \sbsts{n}{n-d}}\dim \pi^F_{J}(W)$.
\end{proof}

We now turn to the behavious of $W$ under wedging with exterior powers of $V$.  We know from Theorem \ref{eq:extshadow} that wedging with exterior powers of $V$ preserves the dimensional fraction.  However, for our application we will need a stronger bound.  We will show (Corollary \ref{cor:genericboth}) that if $W$ has a {\em projection} with large dimensional fraction then wedging with a suitable exterior power of $V$ gives a subspace achieving at least the same dimensional fraction.

We first bound  the dimensional fraction of $W \wedge \expow^d V$ in terms of the average dimensional fraction of a projection onto an $(n-d)$-dimensional subspace. 

\begin{lemma}\label{prop:increaseboth} 
Suppose that $0<  r \leq n-d \leq n$.   
Let $W \subseteq \expow^r V$ and $F \in \GL_n (\R)$.   Then
\[
\frac{\dim \left(W \wedge \expow^d V \right) }{\binom{n}{r+d}} \geq \frac{1}{\binom{n}{n-d}}\sum_{J \in \sbsts{n}{n-d}}\frac{\dim \pi^F_J(W)}{\binom{n-d}{r}}.
\]
\end{lemma}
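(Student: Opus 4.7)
My plan is to establish the stronger un-normalised inequality
\[
\sum_{J \in \binom{[n]}{n-d}} \dim \pi^F_J(W) \leq \binom{r+d}{r}\dim\bigl(W \wedge \expow^d V\bigr).
\]
Dividing both sides by $\binom{n}{n-d}\binom{n-d}{r}$ and using the (bijective) identity $\binom{n}{n-d}\binom{n-d}{r} = \binom{n}{r+d}\binom{r+d}{r}$ then gives the lemma. That identity is exactly the one that underlies Local LYM.

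The key step is to attach to each $J \in \binom{[n]}{n-d}$ the subspace $U_J := \pi^F_J(W) \wedge f_{J^c}$ of $W \wedge \expow^d V$. For $w = \sum_A m_A f_A \in W$, wedging with $f_{J^c}$ annihilates every term with $A \not\subseteq J$, so $w \wedge f_{J^c} = \pi^F_J(w) \wedge f_{J^c}$. Hence $U_J \subseteq W \wedge \expow^d V$ and $\dim U_J = \dim \pi^F_J(W)$ (the map $u \mapsto u \wedge f_{J^c}$ from $\expow^r V_J$ to $\expow^{r+d}V$ is injective). Since the initial set of any $w \in \pi^F_J(W)$ lies in $J$ and hence avoids $J^c$, equation~\eqref{eq:initialset} yields $\ins(w \wedge f_{J^c}) = \ins(w) \cup J^c$, so
\[
\hH_F(U_J) = \{A \cup J^c : A \in \hH_F(\pi^F_J(W))\} \subseteq \hH_F\bigl(W \wedge \expow^d V\bigr),
\]
the containment coming from $U_J \subseteq W \wedge \expow^d V$.

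The argument finishes with a double count. Using Lemma~\ref{lem:inithyp}\ref{it:dim},
\[
\sum_{J} \dim \pi^F_J(W) = \sum_J |\hH_F(\pi^F_J(W))| = \bigl|\bigl\{(J,A):  A \in \hH_F(\pi^F_J(W))\bigr\}\bigr|.
\]
Send each pair $(J, A)$ to $A \cup J^c \in \hH_F(W \wedge \expow^d V)$. For fixed $S \in \binom{[n]}{r+d}$, a preimage of $S$ is determined by the choice of $J^c \in \binom{S}{d}$ (with $A = S \setminus J^c$ then forced), so there are at most $\binom{r+d}{d} = \binom{r+d}{r}$ preimages. Therefore
\[
\sum_{J} \dim \pi^F_J(W) \leq \binom{r+d}{r}\bigl|\hH_F(W \wedge \expow^d V)\bigr| = \binom{r+d}{r}\dim\bigl(W \wedge \expow^d V\bigr),
\]
which is what we wanted. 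I do not anticipate a real obstacle: the argument is a direct refinement of the double-count behind Local LYM (Lemma~\ref{lem:localLYM}) and Theorem~\ref{prop:extLYM}, with the subspaces $U_J$ providing a concrete witness inside $W \wedge \expow^d V$ for each projection.
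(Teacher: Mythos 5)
Your proof is correct and takes essentially the same route as the paper's: the paper likewise decomposes $W \wedge \expow^d V$ via the subspaces $W \wedge f_K = \pi^F_{[n]\setminus K}(W) \wedge f_K$ (your $U_J$ with $K = J^c$), compares initial hypergraphs using Lemma~\ref{lem:inithyp}, and bounds the multiplicity of each $(r+d)$-set by $\binom{r+d}{r}$ before invoking the same binomial identity.
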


\begin{proof}
Recall that the columns $\{f_1,\dots, f_n\}$ of $F$ form a basis for $V$ and that we write $f_K = f_{k_1}\wedge\dots\wedge f_{k_d}$ when $K=\{k_1,\dots,k_d\}$ with the elements listed in increasing order. We know that $W\wedge \expow^dV= \myspan\left\{W \wedge f_K \,:\, K \in \sbsts{n}{d}\right\}$, and so
$$\mathcal H\left(W\wedge\expow^dV\right)\supseteq \bigcup_{K\in \binom{[n]}{d}} \mathcal H(W\wedge f_K).$$
For $K\in \binom{[n]}{d}$, we have $v\wedge f_K=\pi_{[n]\setminus K}^F(v)\wedge f_K$ for all $v\in V$, and so
$\dim(W\wedge f_K)=\dim \pi_{[n]\setminus K}^F(W)$.  Furthermore, 
$$\mathcal H(W\wedge f_K)=\mathcal H\left( \pi_{[n]\setminus K}^F(W) \wedge f_K\right)=\left\{J\cup K: J\in  \mathcal H\left(\pi_{[n]\setminus K}^F(W)\right)\right\}.$$
Each set $S\in \mathcal H(W\wedge\expow^dW)$ has size $r+d$, and can occur in at most $\binom{r+d}{r}$ distinct families $\mathcal H(W\wedge f_K)$ (as there are only $\binom{r+d}{r}$ sets $K\subseteq S$ of size $d$).  Thus
\begin{align*}
\left|\mathcal H\left(W\wedge\expow^dV\right)\right|
&\ge\frac{1}{\binom{r+d}{r}}\sum_{K\in \binom{[n]}{d}}\left|\mathcal H\left( \pi_{[n]\setminus K}^F(W) \wedge f_K\right)\right|\\
&=\frac{1}{\binom{r+d}{r}}\sum_{K\in \binom{[n]}{d}}\dim \left(\pi_{[n]\setminus K}^F(W)\right).
\end{align*}
By Lemma~\ref{lem:inithyp}, $\dim\left(W\wedge \expow^dV\right)=\left|\mathcal H(W\wedge\expow^dV)\right|$. Since $\binom{n}{n+d}\binom{n+d}{r} = \binom{n}{n-d}\binom{n-d}{r}$, the result now follows.
\end{proof}

Once again, we use the existence of generic subspaces to obtain the desired bound.

\begin{corollary}\label{cor:genericboth} Let $V=\R^n$ and fix $ 0 < r < r+ d \leq n $. Let $W \subseteq \expow^r V$ be a linear subspace. Then there exists a nonempty Zariski open set of $F \in \GL_n(\R)$ satisfying  
\[
\frac{\dim \left( W \wedge \expow^d V\right)}{\binom{n}{r+d}}
\geq \frac{\dim \pi^F_{[n-d]}(W)}{\binom{n-d}{r}}
=\max_{J^*,F^*}\frac{ \dim \pi^{F^*}_{J^*} (W)}{\binom{n-d}{r}},
\]
where the maximum is taken over all $J^* \in \sbsts{N}{n-d}$ and $F^* \in \GL_N(\R)$. 
\end{corollary}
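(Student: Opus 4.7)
The plan is to combine Lemma~\ref{lem:constantproj} (which supplies a generic basis making projection dimensions constant over choices of coordinate subset) with Lemma~\ref{prop:increaseboth} (which relates the dimensional fraction of the lift $W \wedge \expow^d V$ to the average of the dimensional fractions of projections).

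First, I would apply Lemma~\ref{lem:constantproj} with $m = n-d$. Let $t = t_{n-d} = \max_{J,F} \dim \pi^F_J(W)$, where the maximum ranges over $J \in \sbsts{n}{n-d}$ and $F \in \GL_n(\R)$. The lemma provides a nonzero polynomial $H$ in the entries $f_{ij}$ whose nonvanishing guarantees both that $F \in \GL_n(\R)$ and that $\dim \pi^F_J(W) = t$ for \emph{every} $J \in \sbsts{n}{n-d}$. Let $\hU$ denote the complement of the zero set of $H$; this is a nonempty Zariski open subset of $\GL_n(\R)$. In particular, for $F \in \hU$, taking $J = [n-d]$ gives
\[
\dim \pi^F_{[n-d]}(W) = t = \max_{J^*,F^*}\dim \pi^{F^*}_{J^*}(W),
\]
which is the claimed equality on the right-hand side.

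Next, for any $F \in \GL_n(\R)$, Lemma~\ref{prop:increaseboth} yields
\[
\frac{\dim (W \wedge \expow^d V)}{\binom{n}{r+d}} \geq \frac{1}{\binom{n}{n-d}} \sum_{J \in \sbsts{n}{n-d}} \frac{\dim \pi^F_J(W)}{\binom{n-d}{r}}.
\]
For $F \in \hU$, every summand equals $t/\binom{n-d}{r}$, so the average on the right collapses to $t/\binom{n-d}{r}$. Combining this with the previous paragraph gives
\[
\frac{\dim (W \wedge \expow^d V)}{\binom{n}{r+d}} \geq \frac{t}{\binom{n-d}{r}} = \frac{\dim \pi^F_{[n-d]}(W)}{\binom{n-d}{r}},
\]
which is the desired inequality on the Zariski open set $\hU$.

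There is no real obstacle here; the work has been done in the two preceding lemmas. The only point that requires any care is lining up the hypotheses: Lemma~\ref{prop:increaseboth} requires $0 < r \leq n - d$, which is equivalent to $0 < r < r+d \leq n$ together with $d \geq 0$, matching the assumptions of the corollary; and Lemma~\ref{lem:constantproj} needs $1 \leq m \leq n-1$, which is fine since $1 \leq n-d \leq n-1$ under our hypotheses (using $d \geq 1$, which follows from $r < r+d$).
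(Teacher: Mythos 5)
Your proposal is correct and follows essentially the same route as the paper: apply Lemma~\ref{lem:constantproj} to obtain a Zariski open set on which $\dim \pi^F_J(W)$ is constant (equal to the maximum $t_{n-d}$) over all $J \in \sbsts{n}{n-d}$, so that the average in Lemma~\ref{prop:increaseboth} collapses to the term for $J=[n-d]$, yielding both the stated equality and inequality. Your hypothesis-checking at the end is a harmless addition; nothing further is needed.
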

\begin{proof} By Lemma~\ref{lem:constantproj}, for all $F$ outside of the zero set of a particular polynomial, the dimension of $\pi^F_J(W)$ depends only on $|J|$, and thus $\dim \pi^F_{[n-d]}(W) = \frac{1}{\binom{n}{n-d}}\sum_{J \in \sbsts{n}{n-d}}\dim \pi^F_J(W)$.
The inequality then follows from Lemma \ref{prop:increaseboth}.
\end{proof}

\section{Two Families Theorems} 
\label{sec:twofam}

%
%

\subsection{Context and consequences}

Bollob\'as's Two Families Theorem \cite{Bollobas65} has been rediscovered in different forms and proved in several different ways (see~\cites{JP71,Katona74,Tarjan75,Lovasz77,AK85,Alon85,GST84,K84,Kalai84,Blokhuis89}, Tuza's surveys \cites{TuzaSurveyI,TuzaSurveyII} of applications, and the expository discussions in  Bollob\'as~\cite{WhiteBook}*{Chapters 9 and 15}, F\"{u}redi~\cite{Furedi88}*{Sections 1 and 2}, Anderson~\cite{Anderson2002}*{Section 1.3}, Babai and Frankl~\cite{BF92}*{Sections 5.1 and 6.2}, Kalai~\cite{KalaiBlog}, Matou\v{s}ek~\cite{Matousek}*{Miniature 33}, Jukna~\cite{Jukna2011}*{Section 9.2.2},  Frankl and Tokushige~\cite{FT}*{Sections 26.2--4}, and Gerbner and Patk\'{o}s~\cite{GP}*{Section 1.1}). 
The simplest version of the Two Families Theorem is perhaps the following: 

\begin{theorem}[Uniform Two Families] \label{thm:ciuniform} 
Let $(A_1,B_1),\dots,(A_m,B_m)$ 
be a sequence of pairs of sets with $|A_i| = a$ and $|B_i| = b$ for every  $i$.
Suppose that
\begin{enumerate}[label=\textup{(\roman*)}]
\item $A_i \cap B_i = \emptyset$ for $1 \leq i \leq m$, and
\item \label{it:intersect} $A_i \cap B_j \not= \emptyset$ for $i \not= j$.
\end{enumerate}
Then $m \leq \binom{a+b}{a}$.  Furthermore, if $m=\binom{a+b}{a}$ then there is some set $S$ of cardinality $a+b$ such that
the $A_i$ are all subsets of $S$ of size $a$, and $B_i=S\setminus A_i$ for each $i$.
\end{theorem}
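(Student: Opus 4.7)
The plan is to adapt the classical Lov\'asz--Frankl exterior-algebra trick, using the genericity available in Section~\ref{sec:projections}. Let $X=\bigcup_i(A_i\cup B_i)$, put $V=\R^{a+b}$, and choose a map $\phi:X\to V$ in \emph{general position}: any $a+b$ of the vectors $\phi(x)$ attached to distinct points of $X$ are linearly independent. Such $\phi$ exist by a Zariski-openness argument, since for each $T\in\sbsts{X}{a+b}$ the requirement that $\{\phi(x):x\in T\}$ be a basis of $V$ is a nonvanishing-determinant condition, in the spirit of Lemma~\ref{lem:nocollapse}.

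Define $\alpha_i=\bigwedge_{x\in A_i}\phi(x)\in\expow^aV$ and $\beta_i=\bigwedge_{x\in B_i}\phi(x)\in\expow^bV$. Condition~(i) makes $A_i\cup B_i$ a set of size $a+b$, so general position gives $\alpha_i\wedge\beta_i\ne 0$ as a top form in $\expow^{a+b}V$; condition~(ii) produces some $x\in A_i\cap B_j$ for each $i\ne j$, and the doubled factor $\phi(x)$ forces $\alpha_i\wedge\beta_j=0$. The standard orthogonality trick now yields linear independence of $\{\alpha_1,\dots,\alpha_m\}$: from $\sum_i c_i\alpha_i=0$, wedging with $\beta_j$ leaves only $c_j(\alpha_j\wedge\beta_j)=0$, so $c_j=0$. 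Since $\dim\expow^aV=\binom{a+b}{a}$, this gives $m\le\binom{a+b}{a}$.

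For the extremal case $m=\binom{a+b}{a}$, the $\alpha_i$ form a basis of $\expow^aV$ and, by the same argument with the roles of $A_i,B_i$ swapped, the $\beta_i$ form a basis of $\expow^bV$; the pairing $(\alpha,\beta)\mapsto\alpha\wedge\beta\in\expow^{a+b}V\cong\R$ exhibits them as dual bases via $\alpha_i\wedge\beta_j=\delta_{ij}\omega$ for a fixed nonzero $\omega$. I would specialise $\phi$ so that $\phi(s)=e_s$ for $s\in S:=A_1\cup B_1$, while $\phi(y)=\sum_{s\in S}c_{y,s}e_s$ is generic for $y\in X\setminus S$. The monomial basis $\{e_C:C\in\sbsts{S}{a}\}$ of $\expow^aV$ has exactly $m$ elements, and the plan is to argue that if some $A_j$ met $X\setminus S$, then $\alpha_j$ would vary nontrivially with the parameters $c_{y,\cdot}$ in a way that, combined with the Pl\"ucker constraint that $\alpha_j$ be decomposable and the requirement that $\{\alpha_1,\dots,\alpha_m\}$ remain a basis while the other $\alpha_i$ ($i\ne j$) stay fixed, leads to a contradiction. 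This would force $A_j\subseteq S$; the symmetric argument applied to the $\beta_j$'s gives $B_j\subseteq S$, whence $B_j=S\setminus A_j$ from cardinalities, and distinctness of the $A_j$ makes them exhaust $\sbsts{S}{a}$. The main obstacle is making this rigidity argument precise --- converting ``$\alpha_j$ depends nontrivially on $c_{y,\cdot}$'' into a genuine combinatorial contradiction while respecting the Pl\"ucker locus --- whereas the inequality itself requires only the transparent wedge-product bookkeeping of the preceding paragraph.
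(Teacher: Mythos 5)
The inequality half of your argument is correct, and it is exactly the standard Lov\'asz--Frankl wedge-product proof that the paper describes as the second classical approach (and generalizes in Theorem~\ref{thm:skewspaces}); note that the paper itself states Theorem~\ref{thm:ciuniform} as a cited classical result and gives no proof of it, so there is no internal argument to compare against. The genuine gap is the \emph{furthermore} clause, which is part of the statement: your treatment of the equality case is an unexecuted plan, and as sketched it does not work. First, when you perturb the parameters $c_{y,s}$ you change every $\alpha_i$ whose support meets $X\setminus S$, not only $\alpha_j$, so the premise that ``the other $\alpha_i$ stay fixed'' is unavailable. Second, the relations you hope to exploit --- $\alpha_i\wedge\beta_j=\delta_{ij}\omega$, decomposability of the $\alpha_i$, and their being a basis --- hold for \emph{every} generic choice of $\phi$ separately; they impose no constraint linking different choices of the parameters, so the observation that ``$\alpha_j$ varies nontrivially'' contradicts nothing. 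Third, any proof of the uniqueness statement must use the symmetric hypothesis~\ref{it:intersect} in an essentially combinatorial way: the identical wedge bookkeeping proves the skew version (Corollary~\ref{cor:wci}), for which, as the paper's footnote points out, the extremal families are \emph{not} unique; your argument uses the symmetry only to produce a second, dual basis, and that alone cannot pin down the supports of the sets.

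To close the gap you need a different mechanism for the extremal case. The standard routes are combinatorial: Bollob\'as's original inductive proof of the weighted version (Theorem~\ref{thm:ciweighted}) carries the equality characterization, and Katona's permutation-counting proof does as well --- when $m=\binom{a+b}{a}$, every linear order of the ground set must place some (necessarily unique) $A_i$ entirely before its $B_i$, and from this one extracts that all the sets lie in a single $(a+b)$-element set $S$ and that the pairs are precisely the complementary $a$/$b$-splits of $S$. Supplying such an argument (or citing it) would complete the proof; the exterior-algebra computation by itself establishes only the bound $m\le\binom{a+b}{a}$.
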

A striking feature of this theorem is that the upper bound depends only on $a$ and $b$, and not on the size of the ground set (compare Theorem \ref{thm:EKR}).

There are two standard approaches to proving the Two Families Theorem, each of which exemplifies important methods in the field and leads to a different generalization.  
One approach is combinatorial (see Bollob\'as \cite{Bollobas65}, or the elegant counting argument due to Katona~\cite{Katona74}).  With this approach, the assumption that the sets in each pair have the same sizes can be relaxed. When $|A|=a$ and $|B|=b$, we will say that the pair $(A,B)$ has \textit{profile} $(a,b)$ and \textit{profile sum} $a+b$. Note that when $|X|=a+b$, there are $\binom{a+b}{b}$ complementary pairs $(A,B)$ with profile $(a,b)$ and $A,B \subseteq X$. Bollob\'as's original result~\cite{Bollobas65} is equivalent to~Theorem~\ref{thm:ciweighted}, which weights each pair of sets by the nominal fraction of the set of pairs with  matching union and profile that it occupies.  

\begin{theorem}[Weighted Two Families] \label{thm:ciweighted} 
Let $(A_1,B_1),\dots,(A_m,B_m)$ 
be a finite collection of pairs of finite sets. Let $a_i=|A_i|$ and $b_i=|B_i|$ for  $1\leq i\leq m$.
Suppose that
\begin{enumerate}[label=\textup{(\roman*)}]
\item $A_i \cap B_i = \emptyset$ for $1 \leq i \leq m$, and
\item $A_i \cap B_j \not= \emptyset$ for $i \not= j$. 
\end{enumerate}
Then 
\begin{equation}\label{abbound}
\sum_{i=1}^m \frac{1}{\binom{a_i+b_i}{a_i}} \leq 1. 
\end{equation}
Furthermore, if equality is achieved, then there is some finite set $S$ and $0 \leq a_0 \leq |S|$ such that
the $A_i$ are the subsets of $S$ of size $a_0$ and $B_i=S\setminus A_i$ for each $i$.
\end{theorem}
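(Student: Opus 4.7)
The plan is to use Katona's permutation-counting argument, which gives both the inequality and a handle on the equality case. Let $X = \bigcup_{i=1}^m (A_i \cup B_i)$ and let $\pi$ be a uniformly random linear order on $X$. For each $i$, let $E_i$ be the event that every element of $A_i$ precedes every element of $B_i$ in $\pi$.

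First, compute $P(E_i)$: conditioning on which $a_i + b_i$ positions the set $A_i \cup B_i$ occupies, the induced ordering is uniform on $(a_i+b_i)!$ arrangements, and exactly $a_i!\,b_i!$ of them place all of $A_i$ before all of $B_i$. Hence $P(E_i) = 1/\binom{a_i+b_i}{a_i}$.

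Next I would show the events $E_i$ are pairwise disjoint, which is where hypothesis (ii) does the work. Suppose $E_i$ and $E_j$ both hold for some $i \neq j$. Write $p_k$ and $q_k$ for the positions in $\pi$ of the last element of $A_k$ and the first element of $B_k$, respectively. Using hypothesis (ii), pick $x \in A_i \cap B_j$ and $y \in A_j \cap B_i$; the position of $x$ is at most $p_i$ and at least $q_j$, while the position of $y$ is at most $p_j$ and at least $q_i$. Thus $q_j \leq p_i$ and $q_i \leq p_j$. Combined with $p_i < q_i$ and $p_j < q_j$ (from $E_i$ and $E_j$), this gives $q_j \leq p_i < q_i \leq p_j < q_j$, a contradiction. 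Summing $P(E_i)$ over the disjoint events yields \eqref{abbound}.

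For the equality case, $\sum_i P(E_i) = 1$ forces the disjoint events $\{E_i\}$ to partition the symmetric group on $X$. I would first argue that $A_i \cup B_i = X$ for every $i$: if some $x \in X \setminus (A_i \cup B_i)$ existed, then any permutation satisfying $E_i$ would remain in $E_i$ under arbitrary repositioning of $x$, and by sliding $x$ to the beginning or end of $\pi$ one can arrange a permutation that no other $E_j$ can cover, contradicting the partition property. Once every $A_i \cup B_i$ equals $S := X$, each $\pi$ determines a unique $i$ together with a cut position, and comparing permutations that differ by a single adjacent transposition allows one to propagate profile equality across the family, forcing a common profile $(a_0, |S| - a_0)$. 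Finally, by counting, the $A_i$ must exhaust all $a_0$-subsets of $S$ with $B_i = S \setminus A_i$.

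The main obstacle I expect is the equality case: the probability computation and the disjointness step are short and mechanical, but extracting structural rigidity from the bare fact that the $E_i$ partition $S_X$ requires delicate tracking. The ease of the inequality versus the care demanded by the characterization is a familiar feature of Bollob\'as-type theorems.
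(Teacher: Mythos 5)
The paper does not actually prove Theorem~\ref{thm:ciweighted}; it quotes it as Bollob\'as's classical result, mentioning exactly the combinatorial route you follow, so the real question is whether your argument is complete. The inequality half is: your computation $P(E_i)=1/\binom{a_i+b_i}{a_i}$ and the disjointness argument via $q_j\le p_i<q_i\le p_j<q_j$ are correct (modulo the harmless remark that if some $A_i$ or $B_i$ is empty then hypothesis (ii) forces $m=1$, so the positions $p_k,q_k$ you quote exist whenever there are two events to compare). This is Katona's argument, and it is sound.

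The equality case, however, has a genuine gap at its first and most important step, the claim that $A_i\cup B_i=X$ for every $i$. Your justification is that, given $x\in X\setminus(A_i\cup B_i)$ and $\pi\in E_i$, sliding $x$ produces ``a permutation that no other $E_j$ can cover, contradicting the partition property.'' That is not a contradiction: such a permutation is still covered by $E_i$ itself, which is exactly what a partition allows. Since repositioning $x$ never takes you out of $E_i$ (as you yourself note), this manoeuvre can never produce what a contradiction actually requires, namely a permutation lying in no $E_j$ at all or in two of them. Some real argument is needed here; for instance, comparing the permutations $x\sigma$ and $\sigma x$ for $\sigma$ ranging over orders of $X\setminus\{x\}$ only shows that the induced events indexed by $\{j: x\in A_j \text{ or } x\notin A_j\cup B_j\}$ and by $\{j: x\in B_j \text{ or } x\notin A_j\cup B_j\}$ each partition the orders of $X\setminus\{x\}$, which does not by itself rule out $x\notin A_i\cup B_i$; an induction on $|X|$ (in the spirit of Bollob\'as's original proof) is one way to make this step honest. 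Once $A_i\cup B_i=X$ is secured, each $E_i$ becomes ``the initial segment of length $a_i$ equals $A_i$,'' the partition property says every maximal chain in $2^X$ contains exactly one $A_i$, and your remaining sketch (adjacent transpositions, i.e.\ comparing chains differing in one set, then counting) is the standard LYM equality analysis and does go through; but as written the bridge to that situation is missing.
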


A second approach, introduced by 
Lov\'asz~\cite{Lovasz77}, uses exterior algebra methods.  This method gives an elegant argument that naturally extends to subspaces of a finite dimensional vector space; a set system version of Two Families follows immediately (using the standard construction illustrated in Corollary~\ref{cor:combo}).  Frankl~\cite{F82} used a similar approach and noted that this method also allows the relaxation of condition~\ref{it:intersect}: instead of requiring $A_i$ and $B_j$ to intersect for all pairs with $i\not=j$, we insist only that the intersection is non-trivial when $i<j$. Proofs of this form of the Two Families Theorem also appeared in \cites{K84, Alon85, AK85}. 

\begin{theorem}[Uniform Skew Subspace Two Families]\label{thm:skewspaces} Let $(A_1,B_1),\dots,(A_m,B_m)$ 
be pairs of non-trivial subspaces of $V=\R^N$. Suppose that $\dim A_i \le a$ and $\dim B_i \le b$ for $1\leq i\leq m$, and 
\begin{enumerate}[label=\textup{(\roman*)}]
\item \label{it:vssdisjoint} $\dim(A_i \cap B_i) = 0 $ for $1 \leq i \leq m$, and
\item \label{it:vssintersect} $\dim(A_i \cap B_j) > 0$ for $1 \leq i<j \leq m$.
\end{enumerate}
Then $m \leq \binom{a+b}{a}$.
\end{theorem}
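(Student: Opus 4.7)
The plan is to adapt the classical exterior-algebra method of Lov\'asz and Frankl, using the generic-projection machinery of Section~\ref{sec:projections} to reduce from the ambient $V=\R^N$ to an $(a+b)$-dimensional coordinate subspace.

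First I would normalize dimensions. By embedding $V$ into a larger space $\R^{N'}$ (with $N'$ large, say $N' \ge N + ma$) and attaching generically chosen vectors, each $A_i$ can be extended to $\tilde A_i$ of dimension exactly $a$ and each $B_i$ to $\tilde B_i$ of dimension exactly $b$. Generic extension preserves $\tilde A_i \cap \tilde B_i = \{0\}$, while $\tilde A_i \cap \tilde B_j \supseteq A_i \cap B_j \ne \{0\}$ for $i<j$ is automatic. Thus we may assume $\dim A_i = a$ and $\dim B_i = b$ for all $i$.

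Next, for each $i$ choose a basis $v_{i,1},\ldots,v_{i,a}$ of $A_i$ and set $\alpha_i = v_{i,1}\wedge\cdots\wedge v_{i,a} \in \expow^a V$; define $\beta_i \in \expow^b V$ analogously. Condition~\ref{it:vssdisjoint} forces $\dim(A_i + B_i) = a+b$, so $\alpha_i \wedge \beta_i \ne 0$; condition~\ref{it:vssintersect} forces $\dim(A_i + B_j) < a+b$ for $i<j$, so $\alpha_i \wedge \beta_j = 0$ in that range. Now apply Lemma~\ref{lem:nocollapse} to the finite collection of proper subspaces $\{A_i\},\{B_i\},\{A_i+B_i\}$ to obtain $F \in \GL_N(\R)$ for which the projection $\pi = \pi^F_{[a+b]}$ restricts to an isomorphism on each $A_i + B_i$. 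Writing $\tilde\pi$ for the induced graded algebra map $\expow V \to \expow V_{[a+b]}$, the wedge relations transport: $\tilde\pi(\alpha_i)\wedge\tilde\pi(\beta_i)\ne 0$ for each $i$, and $\tilde\pi(\alpha_i)\wedge\tilde\pi(\beta_j)=\tilde\pi(\alpha_i\wedge\beta_j)=0$ for $i<j$.

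To finish, I would show $\tilde\pi(\alpha_1),\ldots,\tilde\pi(\alpha_m)$ are linearly independent in $\expow^a V_{[a+b]}$. Given any putative relation $\sum_i c_i\,\tilde\pi(\alpha_i)=0$, let $j$ be the largest index with $c_j\ne 0$ and wedge on the right with $\tilde\pi(\beta_j)$: the $i=j$ term contributes $c_j\,\tilde\pi(\alpha_j)\wedge\tilde\pi(\beta_j)\ne 0$, while $i>j$ terms vanish by the maximality of $j$ and $i<j$ terms vanish by the skew condition. This contradiction yields linear independence, and hence $m \le \dim \expow^a V_{[a+b]} = \binom{a+b}{a}$. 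The main obstacle is the padding step: one must verify that generic extensions of the $A_i$ (resp.\ $B_i$) to dimensions $a$ (resp.\ $b$) can be chosen simultaneously without inadvertently creating a nontrivial intersection within any pair $(\tilde A_i,\tilde B_i)$. This is a standard Zariski-genericity argument paralleling the reasoning of Lemmas~\ref{lem:nocollapse}--\ref{lem:constantproj}; once past it, the generic projection step is a direct application of Lemma~\ref{lem:nocollapse}, and the linear-independence argument is the classical exterior-algebra trick.
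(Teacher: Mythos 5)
Your argument is correct, and it is essentially the classical exterior-algebra proof of Lov\'asz and Frankl that the paper \emph{cites} for Theorem~\ref{thm:skewspaces} rather than proving; within the paper's own framework the statement would instead be recovered as the constant-profile special case of Theorem~\ref{thm:subspaces} (after exactly your padding step all profiles become $(a,b)$, the monotonicity hypothesis is vacuous, and \eqref{eq:all} reads $m/\binom{a+b}{a}\le 1$). Compared with that route, your proof is lighter: it needs only Lemma~\ref{lem:nocollapse} to produce one generic projection $\pi^F_{[a+b]}$ that is injective on each $\tilde A_i+\tilde B_i$, plus the triangular wedge trick, and it correctly exploits that $\pi^F_{[a+b]}$ extends to an algebra homomorphism of $\expow V$, so the vanishing $\alpha_i\wedge\beta_j=0$ for $i<j$ transports for free, with no genericity condition needed on the intersections $A_i\cap B_j$ themselves; the paper's proof of Theorem~\ref{thm:subspaces}, by contrast, must run through initial hypergraphs, the Local LYM bound (Theorem~\ref{prop:extLYM}) and the constancy statement of Lemma~\ref{lem:constantproj} because the profiles there vary. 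Two small points. The padding step you flag as the main obstacle is easier than you fear: for each $i$ separately extend a basis of $A_i\oplus B_i$ to a linearly independent set of size $a+b$ in $\R^{N'}$ (possible once $N'\ge a+b$) and allot the new vectors to $\tilde A_i$ and $\tilde B_i$; no simultaneous genericity is required, since the cross conditions for $i<j$ only need containment, and if you do prefer the generic formulation it is indeed a nonempty Zariski-open condition for each $i$ independently. Also, Lemma~\ref{lem:nocollapse} is stated for \emph{proper} subspaces, so note that taking $N'>a+b$ makes each $\tilde A_i+\tilde B_i$ proper; with that observation every step goes through.
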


A version for hypergraphs follows immediately.\footnote{Note that there is not a unique extremal hypergraph for Corollary~\ref{cor:wci}: for example, $B_1$ can be any $b$-element set disjoint from $A_1$.}

\begin{corollary}[Uniform Skew Two Families] \label{cor:wci} 
Let $(A_1,B_1),\dots,(A_m,B_m)$ 
be a sequence of pairs of sets with $|A_i|=a$ and $|B_i|=b$ for every  $i$.
Suppose that
\begin{enumerate}[label=\textup{(\roman*)}]
\item \label{it:skewdisjoint} $A_i \cap B_i = \emptyset$ for $1 \leq i \leq m$, and
\item \label{it:skewintersect} $A_i \cap B_j \not= \emptyset$ for $1 \leq i < j \leq m$.
\end{enumerate}
Then $m \leq \binom{a+b}{b}$.
\end{corollary}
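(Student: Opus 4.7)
The plan is to deduce Corollary~\ref{cor:wci} from Theorem~\ref{thm:skewspaces} via the standard linearization in which each set is replaced by the span of the corresponding standard basis vectors. Let $N$ be large enough that all the $A_i$ and $B_i$ lie in $[N]$ (for instance, $N = |\bigcup_{i=1}^m (A_i \cup B_i)|$), let $e_1,\dots,e_N$ be the standard basis of $V = \R^N$, and for any $S \subseteq [N]$ set $\widetilde S = \myspan\{e_j : j \in S\}$.

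The first step is to record the elementary but crucial translation
\[
\widetilde I \cap \widetilde J = \myspan\{e_j : j \in I \cap J\},
\]
so that $\dim(\widetilde I \cap \widetilde J) = |I \cap J|$. In particular $I \cap J = \emptyset$ if and only if $\widetilde I \cap \widetilde J = \{0\}$, while $I \cap J \neq \emptyset$ if and only if $\dim(\widetilde I \cap \widetilde J) > 0$.

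Applying this to the given family, the sequence $(\widetilde A_1, \widetilde B_1),\dots,(\widetilde A_m,\widetilde B_m)$ is a sequence of pairs of subspaces of $V$ with $\dim \widetilde A_i = a$, $\dim \widetilde B_i = b$, satisfying $\dim(\widetilde A_i \cap \widetilde B_i) = 0$ (by hypothesis~\ref{it:skewdisjoint}) and $\dim(\widetilde A_i \cap \widetilde B_j) > 0$ for $1 \leq i < j \leq m$ (by hypothesis~\ref{it:skewintersect}). These are exactly conditions~\ref{it:vssdisjoint} and~\ref{it:vssintersect} of Theorem~\ref{thm:skewspaces}, so that theorem yields $m \leq \binom{a+b}{a} = \binom{a+b}{b}$, which is what we wanted.

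Since the argument is a direct pullback through a pre-built theorem, there is no real obstacle. The only thing to verify is the bijection-like correspondence between set intersection and subspace intersection for spans of basis vectors, which is immediate from the fact that $\{e_1,\dots,e_N\}$ is a basis.
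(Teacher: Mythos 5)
Your proof is correct and is exactly the paper's intended argument: the paper derives Corollary~\ref{cor:wci} from Theorem~\ref{thm:skewspaces} "immediately" via the same standard construction (sending each set to the span of the corresponding standard basis vectors), which is spelled out in the proof of Corollary~\ref{cor:combo}. Your verification that set intersections translate to subspace intersections, and the identity $\binom{a+b}{a}=\binom{a+b}{b}$, are precisely what is needed.
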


Thus there are two completely different extensions of the Two Families Theorem: in one case, the set pairs are weighted according to their size; and in the other, the intersection condition is weakened to a skew intersection condition. 
It is natural to wonder if the Two Families Theorem can be extended in both these directions at once.  In other words, is there a Two Families Theorem that has both weights and a skew hypothesis?
For example, Tuza~\cite{TuzaSurveyI}*{Question 12} asked whether linear algebra techniques can be used to prove Two Families theorems in cases where the two families are not of constant profile. 

The main result of this section is the following, which shows that under suitable conditions it is indeed possible to combine the two directions of generalization.  We first state the result for subspaces.

\begin{theorem}
\label{thm:subspaces} 
Let $(A_1,B_1),\dots,(A_m,B_m)$ 
be pairs of non-trivial subspaces of a finite-dimensional real vector space. Write $a_i= \dim A_i$ and $b_i= \dim B_i$ for $1\leq i\leq m$.  
Suppose that
\begin{enumerate}[label=\textup{(\roman*)}]
\item \label{it:vsdisjoint} $\dim(A_i \cap B_i) = 0 $ for $1 \leq i \leq m$, 
\item \label{it:vsintersect} $\dim(A_i \cap B_j) > 0$ for $1 \leq i < j \leq m$, and
\item \label{it:vsorder} $a_1 \leq a_2 \leq \dots\leq a_m$ and $b_1 \geq b_2 \geq \dots \geq b_m$.
\end{enumerate}
Then 
\begin{equation}\label{eq:all}
\sum_{i=1}^m \frac{1}{\binom{a_i+b_i}{a_i}} \leq 1.
\end{equation}
\end{theorem}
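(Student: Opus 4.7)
My plan is to reduce Theorem~\ref{thm:subspaces} to the uniform skew Two Families Theorem~\ref{thm:skewspaces} by iterative shrinking of dimensions. First, embed all the $A_i, B_i$ in $V=\R^N$ for $N$ sufficiently large, and attach nonzero wedge representatives $\alpha_i \in \expow^{a_i}V$ and $\beta_i\in\expow^{b_i}V$. Hypothesis~(i) translates to $\alpha_i\wedge\beta_i\ne 0$ and hypothesis~(ii) to $\alpha_i\wedge\beta_j=0$ for $i<j$. In the uniform special case $(a_i,b_i)=(a,b)$ for every $i$, Theorem~\ref{thm:skewspaces} gives $m\le\binom{a+b}{a}$, which is precisely $\sum_i 1/\binom{a+b}{a}\le 1$; so it suffices to reduce the general weighted statement to this uniform version.

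The reduction proceeds by shrinking. Suppose $b_i>b_{i+1}$ for some $i$. I would replace $B_i$ with a codimension-$1$ subspace $B_i'\subset B_i$ chosen to contain every one-dimensional intersection $A_{i'}\cap B_i$ with $i'<i$; higher-dimensional intersections survive any hyperplane cut, so all required intersections $A_{i'}\cap B_i'$ remain nonzero and~(ii) is preserved. Hypothesis~(i) is automatic since $B_i'\subset B_i$, and the strict inequality $b_i>b_{i+1}$ preserves the ordering~(iii). Moreover $1/\binom{a_i+b_i-1}{a_i}\ge 1/\binom{a_i+b_i}{a_i}$, so the weighted sum weakly grows. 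A symmetric shrinking of some $A_k$ is available when $a_{k-1}<a_k$. Iterating these shrinks strictly decreases $\sum_i(a_i+b_i)$ and hence terminates in a state with common profile, to which the uniform bound applies. The original weighted sum is dominated by the final one and therefore also $\le 1$.

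The main obstacle is guaranteeing that a valid shrinking is always available. Shrinking $B_i$ requires the span of the forced lines $\{A_{i'}\cap B_i:\,i'<i,\ \dim(A_{i'}\cap B_i)=1\}$ to be a proper subspace of $B_i$, and shrinking some $A_k$ has a symmetric requirement; in principle both could be blocked. Note that shrinking the extreme pairs, namely $B_1$ (when $b_1>b_2$, where the set of forced lines is empty) and $A_m$ (when $a_m>a_{m-1}$), is always unblocked, so the potential obstruction is concentrated at interior pairs. Ruling out simultaneous blocking everywhere, and handling the remaining boundary cases, is the technical heart of the argument; I expect this step to use the ordering hypothesis~(iii) together with the generic-basis machinery of Section~\ref{sec:projections} to ensure that generic shrinking choices sidestep coincidental degeneracies.
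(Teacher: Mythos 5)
Your reduction has a genuine gap, and unfortunately it sits exactly at the step you deferred: the claim that, while the profiles are non-uniform, some valid hyperplane shrink always exists is false, and no amount of genericity or use of the ordering hypothesis can repair it, because the obstruction is dimensional rather than a coincidental degeneracy. Concretely, take $V=\R^4$ with standard basis $e_1,\dots,e_4$ and set $A_1=\myspan\{e_3\}$, $B_1=\myspan\{e_1,e_2\}$; $A_2=\myspan\{e_1,e_2\}$, $B_2=\myspan\{e_3,e_4\}$; $A_3=\myspan\{e_2,e_4\}$, $B_3=\myspan\{e_1,e_3\}$; $A_4=\myspan\{e_1,e_4\}$, $B_4=\myspan\{e_2,e_3\}$. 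All of (i), (ii), (iii) hold, with profiles $(1,2),(2,2),(2,2),(2,2)$. Since subspaces can only shrink, a common profile forces $\dim A_2$ down to $1$. But $A_2\cap B_3=\myspan\{e_1\}$ and $A_2\cap B_4=\myspan\{e_2\}$ are distinct lines spanning $A_2$, so any line $A_2'\subseteq A_2$ kills at least one of the intersections required by (ii) for the pairs $(2,3)$ and $(2,4)$; shrinking $B_3$, $B_4$ or the other spaces first only makes these intersections smaller, and indeed $B_3$ and $B_4$ are themselves rigid (their forced lines span them). So the iteration terminates at a non-uniform profile and the reduction to Theorem~\ref{thm:skewspaces} cannot be completed; keeping or discarding the ordering invariant, or choosing the hyperplanes generically, does not help. (A conceivable rescue would be to allow re-ordering the pairs after a shrink breaks a skew intersection, but that is a different argument and nothing in your proposal establishes it.) The parts of your plan that do work --- monotonicity of the weights under shrinking, the boundary shrinks of $B_1$ and $A_m$, and the observation that higher-dimensional intersections survive a hyperplane cut --- are all fine; the failure is solely the existence of interior shrinks.

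For comparison, the paper does not reduce to the uniform skew theorem at all. It runs a Lov\'asz-type wedge construction $W_{i+1}=\myspan\{W_i\wedge\expow^{a_{i+1}-a_i}V,\,v_{A_{i+1}}\}$, projects onto generically chosen coordinate subspaces of dimension $n_i=a_i+b_i$ to get spaces $Z_i\subseteq\expow^{a_i}\R^{n_i}$, and shows via the exterior Local LYM inequality (Theorem~\ref{prop:extLYM}) together with the generic projection and wedging bounds (Corollaries~\ref{cor:genericproj} and~\ref{cor:genericboth}) that the dimensional fraction of $Z_i$ increases by at least $1/\binom{n_i}{a_i}$ at each step; since the fraction is at most $1$, the weighted bound follows. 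In other words, the varying profiles are handled by changing the exterior degree and the ambient dimension of a single growing subspace, not by normalizing the set-pair system itself --- which, as the example above shows, is not in general possible.
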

We prove this in the next subsection.  The proof 
works in varying levels of the exterior algebra and over vector spaces of varying dimension. For this, we will use the upwards Local LYM inequality of Section~\ref{sec:LYMup} and the projection and wedging bounds of Section~\ref{sec:projections}.

A combinatorial version of Theorem \ref{thm:subspaces} follows immediately via a standard construction: 
\begin{corollary}[Weighted Skew Two Families]\label{cor:combo} Let $(A_1,B_1),\dots,(A_m,B_m)$ 
be pairs of finite non-empty sets. Write $a_i=|A_i|$ and $b_i=|B_i|$ for $1\leq i\leq m$.  
Suppose that
\begin{enumerate}[label=\textup{(\roman*)}]
\item \label{it:disjoint} $A_i \cap B_i = \emptyset$ for $1 \leq i \leq m$, 
\item $A_i \cap B_j \not= \emptyset$ for $1 \leq i < j \leq m$, and
\item \label{it:order} $a_1 \leq a_2 \leq \dots\leq a_m$ and $b_1 \geq b_2 \geq \dots \geq b_m$. 
\end{enumerate}
Then 
\begin{equation}
\sum_{i=1}^m \frac{1}{\binom{a_i+b_i}{a_i}} \leq 1.
\end{equation}
\end{corollary}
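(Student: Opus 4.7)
The plan is to deduce Corollary~\ref{cor:combo} from Theorem~\ref{thm:subspaces} by the standard device of replacing each element of the ground set by a vector in general position. This turns the combinatorial intersection conditions into subspace dimension conditions with no loss.

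Concretely, let $X = \bigcup_{i=1}^m (A_i \cup B_i)$ and set $N = |X|$. For each $x \in X$, I would choose a vector $v_x \in \R^N$ so that any collection of at most $N$ of the vectors $\{v_x : x \in X\}$ is linearly independent; this is easy to arrange, e.g.\ by taking the $v_x$ to be distinct points on the moment curve $t \mapsto (1, t, t^2, \dots, t^{N-1})$, so that any $N$ of them form a Vandermonde matrix. For each $i$, I then define
\[
A_i' = \myspan\{v_x : x \in A_i\}, \qquad B_i' = \myspan\{v_x : x \in B_i\}.
\]

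Next I would verify that $(A_1', B_1'),\dots,(A_m', B_m')$ satisfies the hypotheses of Theorem~\ref{thm:subspaces}. Since each $A_i \cup B_i$ has size at most $N$, the general-position choice gives $\dim A_i' = |A_i| = a_i$, $\dim B_i' = |B_i| = b_i$, and (using hypothesis~\ref{it:disjoint} of the corollary, that $A_i \cap B_i = \emptyset$) the union $A_i \cup B_i$ consists of $a_i + b_i \leq N$ distinct elements, whose corresponding vectors are linearly independent. Hence $A_i' + B_i'$ has dimension $a_i + b_i$, which forces $A_i' \cap B_i' = \{0\}$, giving condition~\ref{it:vsdisjoint} of Theorem~\ref{thm:subspaces}. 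For condition~\ref{it:vsintersect}, whenever $1 \leq i < j \leq m$, the hypothesis $A_i \cap B_j \neq \emptyset$ gives some $x \in A_i \cap B_j$, whence $v_x \in A_i' \cap B_j'$ is non-zero. Finally, condition~\ref{it:vsorder} of Theorem~\ref{thm:subspaces} matches condition~\ref{it:order} of the corollary verbatim once we observe $\dim A_i' = a_i$ and $\dim B_i' = b_i$.

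Applying Theorem~\ref{thm:subspaces} then yields $\sum_{i=1}^m 1/\binom{a_i+b_i}{a_i} \leq 1$, as required. There is no real obstacle here: the only thing to be careful about is ensuring that general position is available, and taking $N = |X|$ with moment-curve vectors makes this transparent. The content is entirely in Theorem~\ref{thm:subspaces}; this corollary is the standard translation from subspaces to sets.
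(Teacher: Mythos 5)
Your proof is correct and is essentially the paper's own argument: the paper simply embeds the ground set in $[N]$ and sends each set to the span of the corresponding standard basis vectors of $\R^N$, which already yields $\dim A_i'=a_i$, $\dim B_i'=b_i$, trivial intersections for disjoint sets, and non-trivial intersections for intersecting sets. Your moment-curve ``general position'' choice is harmless but superfluous, since with $N=|X|$ any family of $N$ linearly independent vectors (e.g.\ the standard basis) does the job.
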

\begin{proof}[Proof] Let $N \in \N$ be large enough that we may assume $A_i,B_i \subseteq [N]$ for $1 \leq i \leq m$. Let $\{e_1,\dots, e_N\}$ be the standard basis of $\R^N$. Map each set $A_i$ to the subspace $A'_i = \myspan\{e_a\,:\, a\in A_i\}\subseteq \R^N$ and each $B_i$ to the subspace $B'_i = \myspan\{e_b\,:\,b \in B_i\}\subseteq \R_N$. Then $\dim A'_i=a_i$, $\dim B'_i = b_i$, and the hypotheses of Theorem~\ref{thm:subspaces} are satisfied by these subspaces. 
\end{proof}

A bound of form \eqref{eq:all} does not hold 
for arbitrary families of pairs satisfying a skew intersection condition without adding some restriction on the set sizes, as the following examples show.

\begin{example}[Babai and Frankl~\cite{BF92}*{Exercise 5.1.1}] \label{ex:death}
List all pairs $(A,A^C)$ with $A \in 2^{[n]}$, sorted by decreasing cardinality of the first element. This ``death'' example, in which the $a_i$'s decrease as the $b_i$'s increase,  satisfies~\ref{it:skewdisjoint} and ~\ref{it:skewintersect}, but
\[
\sum_{1=1}^{2^n} \frac{1}{\binom{n_i}{a_i}} = \sum_{j=0}^{n}\frac{\binom{n}{j}}{\binom{n}{j}}= n+1.
\]
\end{example}

\begin{example}\label{ex:log} Keeping one family of sets of constant size is also insufficient. Set
 $(A_i,B_i) = (\{i\},[i-1])$ for $1 \leq i \leq n$. Now
\[
\sum_{i=1}^n \frac{1}{\binom{n_i}{a_i}} = \sum_{i=1}^n \frac{1}{i}\sim \log n.
\]
\end{example}

Returning to subspaces, it
is natural to wonder whether a weighted Two Families Theorem holds under the full symmetric cross-intersecting hypothesis. Theorem~\ref{thm:subspaces} allows some progress: 

\begin{corollary}
\label{cor:spaceconstsum} Let $n\ge2$, and suppose that $(A_1,B_1),\dots,(A_m,B_m)$ 
are pairs of non-trivial subspaces of $V=\R^N$ such that $a_i+b_i=n$  for $1 \leq i \leq m$, where $a_i=\dim A_i$ and $b_i= \dim B_i$.    
Suppose that
\begin{enumerate}[label=\textup{(\roman*)}]
\item \label{it:vssdisjoint2} $\dim(A_i \cap B_i) = 0 $ for $1 \leq i \leq m$, and
\item \label{it:vssintersect2} $\dim(A_i \cap B_j) > 0$ for $1 \leq i, j \leq m$ with $i\neq j$.
\end{enumerate}
Then 
\begin{equation*}
\sum_{i=1}^m \frac{1}{\binom{n}{a_i}} \leq 1.
\end{equation*}
\end{corollary}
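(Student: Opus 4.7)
The plan is to reduce Corollary \ref{cor:spaceconstsum} directly to Theorem \ref{thm:subspaces} by a single reindexing step. The only obstruction to applying Theorem \ref{thm:subspaces} as stated is hypothesis \ref{it:vsorder}, which demands that the $a_i$ be nondecreasing while simultaneously the $b_i$ are nonincreasing; under the constant profile sum assumption $a_i+b_i=n$, these two monotonicity requirements collapse into a single one and can therefore be arranged by sorting.

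Concretely, I would first choose a permutation $\sigma$ of $[m]$ so that $a_{\sigma(1)}\le a_{\sigma(2)}\le\cdots\le a_{\sigma(m)}$. Because $b_i=n-a_i$ for every $i$, this permutation automatically produces $b_{\sigma(1)}\ge b_{\sigma(2)}\ge\cdots\ge b_{\sigma(m)}$, so the reindexed family $(A_{\sigma(i)},B_{\sigma(i)})_{i=1}^m$ satisfies hypothesis \ref{it:vsorder} of Theorem \ref{thm:subspaces}. Hypothesis \ref{it:vsdisjoint} is a per-index condition and is preserved under any permutation. For hypothesis \ref{it:vsintersect}, note that the symmetric cross-intersecting condition \ref{it:vssintersect2} of the corollary is strictly stronger than the skew condition \ref{it:vsintersect}: it gives $\dim(A_i\cap B_j)>0$ for every ordered pair $i\ne j$, so in particular for every $i<j$ after reindexing.

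With all three hypotheses of Theorem \ref{thm:subspaces} verified, apply it to the reordered family to obtain
\[
\sum_{i=1}^m \frac{1}{\binom{a_{\sigma(i)}+b_{\sigma(i)}}{a_{\sigma(i)}}}\le 1,
\]
and the constant profile sum $a_{\sigma(i)}+b_{\sigma(i)}=n$ turns each binomial coefficient into $\binom{n}{a_{\sigma(i)}}$. Since the sum is invariant under permutation of its terms, this is the desired inequality.

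There is essentially no difficulty here beyond noticing the compatibility of the two monotonicity conditions when $a_i+b_i$ is held constant; the entire content of the corollary is that this special case is precisely the regime in which the skew hypothesis in Theorem \ref{thm:subspaces} can be upgraded to the symmetric one at no cost. I expect the write-up to be no more than a few lines.
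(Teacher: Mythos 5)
Your proposal is correct and matches the paper's own proof: both sort the pairs so that the $a_i$ are nondecreasing, observe that the constant profile sum $a_i+b_i=n$ then forces the $b_i$ to be nonincreasing, and note that the symmetric intersection hypothesis survives any reindexing, so Theorem~\ref{thm:subspaces} applies directly. No gaps.
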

\begin{proof}
Permute the subscripts of the pairs $(A_i,B_i)$ so that the $A_i$'s are listed in increasing order of dimension; since our cross-intersecting hypothesis~\ref{it:vssintersect} is symmetric, we can do so. Because the profile sums $a_i+b_i=n$ are constant, 
the resulting system satisfies all hypotheses of Theorem~\ref{thm:subspaces}. 
\end{proof}

The following also follows straighforwardly from Theorem~\ref{thm:subspaces}.

\begin{corollary}
\label{cor:weightedspaces} Let $n\ge2$ and suppose that 
$(A_1,B_1),\dots,(A_m,B_m)$ 
are pairs of non-trivial subspaces of $V=\R^N$. Write $a_i=\dim A_i$ for $1\leq i\leq m$, and let $b = \max _i \dim(B_i)$.   Suppose that
\begin{enumerate}[label=\textup{(\roman*)}]
\item \label{it:vssdisjoint3} $\dim(A_i \cap B_i) = 0 $ for $1 \leq i \leq m$, and
\item \label{it:vssintersect3} $\dim(A_i \cap B_j) > 0$ for $1 \leq i, j \leq m$ with $i\neq j$.
\end{enumerate}
Then 
\begin{equation*}
\sum_{i=1}^m \frac{1}{\binom{a_i+b}{a_i}} \leq 1.
\end{equation*}
\end{corollary}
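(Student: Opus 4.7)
The plan is to reduce to Theorem~\ref{thm:subspaces} by enlarging each $B_i$ to a subspace $B_i'$ of dimension exactly $b$, while preserving both the diagonal disjointness $A_i \cap B_i' = \{0\}$ and the cross-intersection $A_i \cap B_j' \neq \{0\}$ for $i \neq j$. Once the second coordinate is constant at $b$, the monotonicity requirement~\ref{it:vsorder} of Theorem~\ref{thm:subspaces} only asks that the $a_i$'s are weakly increasing, which can be arranged by a relabeling; the symmetric cross-intersection hypothesis~\ref{it:vssintersect3} is invariant under such a relabeling and yields, a fortiori, the skew hypothesis~\ref{it:vsintersect} of the theorem.

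To produce the extensions, I will pass to a larger ambient space: embed $V \subseteq V' := V \oplus \R^b$. For each $i$, fix a subspace $U_i \subseteq \{0\} \oplus \R^b$ with $\dim U_i = b - b_i$ (possible since $b_i \leq b$), and set $B_i' := B_i + U_i \subseteq V'$. Then $\dim B_i' = b_i + (b-b_i) = b$ and $B_i' \cap V = B_i$, so $A_i \cap B_i' = A_i \cap B_i = \{0\}$. For $j \neq i$, $A_j \cap B_i' \supseteq A_j \cap B_i \neq \{0\}$, so the symmetric cross-intersection condition persists for the enlarged pairs $(A_i, B_i')$.

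After relabeling the indices so that $a_1 \leq a_2 \leq \cdots \leq a_m$, the sequence $b_1', \ldots, b_m'$ is constant equal to $b$, which is trivially weakly decreasing, so~\ref{it:vsorder} holds; the symmetric cross-intersection hypothesis, invariant under relabeling, gives $\dim(A_i \cap B_j') > 0$ for $i < j$, i.e.\ \ref{it:vsintersect}. All hypotheses of Theorem~\ref{thm:subspaces} are now in force for the pairs $(A_i, B_i')$, and applying it yields
\[
\sum_{i=1}^m \frac{1}{\binom{a_i + b}{a_i}} = \sum_{i=1}^m \frac{1}{\binom{a_i + b_i'}{a_i}} \leq 1.
\]
There is no serious obstacle here: the content of the corollary lies entirely in Theorem~\ref{thm:subspaces}, and the only care required is to ensure enough ambient room for the extensions, which is why I enlarge $V$ at the outset rather than attempting to enlarge the $B_i$ inside the original space (where a crowded configuration might not admit such disjoint extensions).
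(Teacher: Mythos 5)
Your proof is correct and follows essentially the same route as the paper: reorder so the $a_i$ are nondecreasing (allowed because the intersection hypothesis is symmetric), pad each $B_i$ to dimension exactly $b$ while keeping it disjoint from $A_i$, and apply Theorem~\ref{thm:subspaces}. Your use of the ambient enlargement $V\oplus\R^b$ is just a clean way of guaranteeing room for the extensions, matching the paper's embedding step.
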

\begin{proof}
First, permute the subscripts of the pairs $(A_i,B_i)$ of spaces so that the $A_i$'s are listed in increasing order of dimension; since our cross-intersecting hypothesis~\ref{it:vssintersect} is symmetric, we can do so. 

Let $a=\max_i a_i$, and embed the entire system in ${\mathbb R}^{a+b}$. For each $b_i < b$, extend $B_i$ by including in it $b-b_i$ linearly independent vectors outside $A_i$. 
The resulting system, in which $a_1 \leq \dots \leq a_m$ and $b_i = b$ for $1 \leq i \leq m$,  satisfies the hypotheses of Theorem~\ref{thm:subspaces}. 
\end{proof}
Note that the proof of Corollary~\ref{cor:weightedspaces} does not use the full symmetric cross-intersecting condition: the argument goes through as long as the pairs of spaces are fully cross-intersecting between distinct profiles, but possibly only weakly cross-intersecting (with respect to some ordering) within the collections of pairs with the same profile. 

\subsection{Proof of Theorem~\ref{thm:subspaces}}\label{sec:proofs}

First, a definition: for a subspace $C \subseteq V=\R^N$ with basis $\{c_1, \dots, c_d\},$ we define the  {\em $d$-blade} 
\begin{equation}\label{eq:defblade}
v_C = c_1 \wedge \dots \wedge c_d \in \expow^d V.
\end{equation}
Although $v_C$ is only determined up to a non-zero constant, $\myspan\{v_C\}$ is a well-defined one-dimensional subspace of $\expow^d V$.

We now sketch our strategy. The hypotheses of Theorem~\ref{thm:subspaces} allow both the $a_i$'s and the profile sums $n_i=a_i+b_i$ to vary in~$i$.  Because the profile sums can vary, we will want to vary the dimension of the underlying vector space. Because the $a_i$'s can vary, we will want to vary the exterior degree as well. We will deal with this by inductively constructing a sequence of subspaces $Z_i$, where $Z_i$ lies in  $\expow^{a_i} \R^{n_i}$. The space $Z_i$ encodes the intersection structure of the pairs $(A_1,B_1), \dots, (A_i,B_i)$ and will satisfy
\begin{equation}\label{eq:Zsketch}
\frac{\dim Z_i}{\binom{n_i}{a_i}} \geq \sum_{j=1}^i \frac{1}{\binom{n_j}{a_j}}.
\end{equation}
  
%
%

\begin{proof}[Proof of Theorem~\ref{thm:subspaces}] 
The main step in the proof lies in associating to the space $Z_i \subseteq \expow^{a_i} \R^{n_i}$ a suitable space $Y_i \subseteq \expow^{a_{i+1}} \R^{n_{i+1}}$ such that
\begin{equation*} 
\frac{\dim Y_i}{\binom{n_{i+1}}{a_{i+1}}} \geq \frac{\dim Z_i}{\binom{n_i}{a_i}},
\end{equation*}
and $Y_i$ does not contain the $a_{i+1}$-blade corresponding to the space $A_{i+1}$.
We then extend $Y_i$ by the $a_{i+1}$-blade, increasing its dimension by 1, to obtain $Z_{i+1}$ satisfying~inequality \eqref{eq:Zsketch} for $i+1$. Continuing through to $i=m$ and noting that $\dim Z_m \leq \binom{n_m}{a_m}$ gives the desired inequality.  

Rather than defining spaces $Z_i$, $Y_i$ directly, we define them as projections of a sequence of spaces $W_i$ sitting in appropriate exterior powers of the ground space $V$.
We recursively construct the sequence $W_i \subseteq \expow^{a_i} V$ by setting $W_0 = \{0\}$ and, for $0 \leq i \leq m-1$,
\begin{equation}\label{eq:Wdef}
W_{i+1} = \myspan\left\{ W_i \wedge \expow^{a_{i+1}-a_i} V, \, v_{A_{i+1}}\right\}.
\end{equation}
We will fix a suitable basis $F$ for $V$ and use it to define a sequence of subspaces $V_{n_i}=\pi^F_{[n_i]}(V)$ of $V$.  Since $V_{n_i}$ is generated by the first $n_i$ basis elements of $F$, we have $\dim V_{n_i} = n_i$.  We then define
$Z_i$ as the projection of $W_i$ on $\expow^{a_i}V_{n_i}$, and take $Y_i$ to be the projection of $W_i$ onto $\expow^{a_i}V_{n_{i+1}}$, wedged with
$\expow^{a_{i+1}-a_i}V_{n_{i+1}}$. That is, $Y_i$ is a subspace of $\expow^{a_i+1} V_{n_{i+1}}$, as is $Z_{i+1}$. As we prove our chain of inequalities, we will need to relate the dimensions of $Z_i$ and $Z_{i+1}$; the space $Y_i$ provides an intermediate step.

Let us give precise definitions  of the spaces described above.
Let $C_i = \myspan\{A_i,B_i\}$, and let $n_i = \dim C_i = a_i+b_i$ (by hypothesis~\ref{it:vsdisjoint}). By Lemmas~\ref{lem:nocollapse} and~\ref{lem:constantproj}, there is a Zariski open set of bases $\{f_1,\dots,f_N\}$ for $V$ that satisfy the following:  for every  $J \subseteq [N]$ and all $1 \leq i<j  \leq m$,
\begin{subequations}
\begin{align}
\label{eq:Cdim}  \dim(\pi^F_J(C_i)) & = \min \{ n_i, |J|\},\\
\label{eq:intersect} \dim(\pi^F_J(A_i \cap  B_j)) & = \min \{ \dim (A_i \cap B_j), |J|\},\\
\label{eq:constproj} \dim \pi^F_{J}(W_i) & =  t_{i,|J|},
\end{align}
\end{subequations}
where $t_{i,|J|}$ is the maximum dimension of $\pi_{J^*}^F(W_i)$ over all choices of $F$ and $J^*$ with $|J^*|=|J|$. 
Fix one such generic basis $F$, and note that it will satisfy  Corollaries~\ref{cor:genericproj} and ~\ref{cor:genericboth}.  Let
\begin{equation}\label{eq:YZdef}
Z_i= \pi^F_{[n_i]}(W_i), \  X_i= \pi^F_{[n_{i+1}]}(W_i) \
\textrm{ and }\
Y_{i} = X_i \wedge \expow^{a_{i+1}-a_i} V_{[n_{i+1}]}.
\end{equation} 
Thus $Z_i$ is a subspace of $\expow^{a_i}V_{n_i}$, while $X_i $ is a subspace of $\expow^{a_{i}}V_{n_{i+1}}$ and $Y_i$ is a subspace of $\expow^{a_{i+1}}V_{n_{i+1}}$. 

We will  verify that for $0 \leq i \leq m-1$ 
\begin{equation}\label{eq:Zdim}
\dim Z_{i+1}  = \dim Y_i +1\\
\end{equation}
and
\begin{align}
\label{eq:phidim} 
\frac{\dim Y_{i}}{\binom{n_{i+1}}{a_{i+1}}} & \geq \frac{\dim Z_{i}}{\binom{n_{i}}{a_{i}}}.
\end{align}
We then complete the proof by applying \eqref{eq:Zdim} and \eqref{eq:phidim} in alternation until the final result is reached:
\begin{align*}
1 & \geq \frac{\dim Z_m}{\binom{n_m}{a_m}} = \frac{1+\dim Y_{m-1}}{\binom{n_m}{a_m}}\geq \frac{1}{\binom{n_m}{a_m}} + \frac{\dim Z_{m-1}}{\binom{n_{m-1}}{a_{m-1}}} =
\cdots \geq \sum_{i=1}^{m} \frac{1}{\binom{n_i}{a_i}}.
\end{align*}

\textit{Proof of~\eqref{eq:Zdim}}: 
By the definitions \eqref{eq:Wdef} and \eqref{eq:YZdef} of $W_{i+1}$ and $Z_{i+1}$,
\begin{align*}
Z_{i+1} &=  \pi^F_{[n_{i+1}]}(W_{i+1})\\
&= \myspan\left(\pi^F_{[n_{i+1}]}(v_{A_{i+1}}),  \pi^F_{[n_{i+1}]}\left(W_i \wedge \expow^{a_{i+1}-a_i} V_{[n_{i+1}]}\right)\right)\\
&= \myspan\left(\pi^F_{[n_{i+1}]}(v_{A_{i+1}}),  \pi^F_{[n_{i+1}]}(W_i) \wedge \expow^{a_{i+1}-a_i} V_{[n_{i+1}]}\right)\\
&= \myspan(\pi^F_{[n_{i+1}]}(v_{A_{i+1}}), Y_i).
\end{align*}
So it will suffice to check that $\pi^F_{[n_{i+1}]}\left(v_{A_{i+1}} \right) \not\in Y_i$. 
By hypothesis (i), we have $v_{A_{i+1}}\wedge v_{B_{i+1}}\ne 0$.  Since
$n_{i+1}=a_{i+1}+b_{i+1}$, it follows from \eqref{eq:Cdim} that
\begin{equation}\label{a0}
\pi^F_{[n_{i+1}]}(v_{A_{i+1}}) \wedge \pi^F_{[n_{i+1}]}(v_{B_{i+1}}) \not=0.
\end{equation}

Now consider $y\in W_i\wedge \expow^{a_{i+1}-a_i}V$.  For $h<i+1$, hypothesis (ii) implies that $v_{A_h}\wedge v_{B_{i+1}}=0$.  Since
(by \eqref{eq:Wdef}), $y$ is a linear combination of elements $\{v_{A_h}\wedge \expow^{a_{i+1}-a_h}V:h\le i\}$, it follows that $y\wedge v_{B_{i+1}}=0$.  Thus
\begin{equation}\label{a1}
 \pi^F_{[n_{i+1}]}(y) \wedge \pi^F_{[n_{i+1}]}(v_{B_{i+1}}) =0.
\end{equation}
Equation \eqref{eq:Zdim} now follows from \eqref{a0} and \eqref{a1}.

\medskip

\textit{Proof of~\eqref{eq:phidim}}: Our argument depends on how $(a_{i+1},b_{i+1})$ is related to $(a_{i},b_{i})$.\\

\begin{itemize}
\item \textbf{Profile unchanged.} When $(a_{i+1},b_{i+1})=(a_{i},b_{i})$, we also know $n_{i+1}=n_{i}$ and $Y_{i}=Z_i$, so~(\ref{eq:phidim}) follows immediately.\\

\item \textbf{Profile sum constant.} When $(a_{i+1},b_{i+1})=(a_i+c,b_i-c)$ for some $c>0$, we have $n_{i+1}=n_{i}$ and $Y_{i}= Z_{i} \wedge \left(\expow^c V_{[n_{i}]} \right)$, so Lemma~\ref{prop:extLYM}  gives~(\ref{eq:phidim}).\\

\item \textbf{$B_i$'s shrink faster.} When $(a_{i+1},b_{i+1}) = (a_{i}+c,b_{i}-c-d)$ for some $c \geq 0$ and $d>0$, we have $n_{i+1}=n_{i}-d$.
By Lemma~\ref{prop:extLYM}, 
\begin{align*}
\frac{\dim Y_{i}}{\binom{n_{i+1}}{a_{i+1}}}
= \frac{\dim \left(\pi^F_{[n_i-d]}(W_i)\wedge \expow^c V_{[n_i-d]} \right)}{\binom{n_{i}-d}{a_{i}+c}} 
\geq \frac{\dim \pi^F_{[n_i-d]}(W_i)}{\binom{n_{i}-d}{a_{i}}}.
\end{align*}
Since $\pi_{[n_i-d]}^F(W_i)=\pi_{[n_i-d]}^F\left(\pi_{[n_i]}^F(W_i)\right)$,  Corollary \ref{cor:genericproj} and our generic choice of $F$ imply
\begin{align*}
\frac{\dim \pi^F_{[n_i-d]}(W_i)}{\binom{n_{i}-d}{a_{i}}} 
\geq \frac{\dim \pi^F_{[n_i]}(W_i)}{\binom{n_{i}}{a_{i}}}
= \frac{\dim Z_{i}}{\binom{n_{i}}{a_{i}}},
\end{align*}
Thus \eqref{eq:phidim} holds.\\

\item \textbf{$A_i$'s grow faster.} When $(a_{i+1},b_{i+1}) = (a_{i}+c+d,b_{i}-c)$ for some $c \geq 0$ and $d>0$,  we have $n_{i+1}=n_i+d$.
By Lemma~\ref{prop:extLYM}, 
\begin{align*}
\frac{\dim Y_{i}}{\binom{n_{i+1}}{a_{i+1}}}
&=\frac{\dim \left(\pi^F_{[n_i+d]}(W_i) \wedge \expow^{d+c} V_{[n_i+d]} \right)}{\binom{n_{i}+d}{a_{i}+d+c}}\\
 &\geq \frac{\dim \left(\pi^F_{[n_i+d]}(W_i) \wedge \expow^d V_{[n_i+d]} \right)}{\binom{n_{i}+d}{a_{i}+d}}.
\end{align*}
Since $\pi_{[n_i]}^F(W_i)=\pi_{[n_i]}^F\left(\pi_{[n_i+d]}^F(W_i)\right)$ we can apply Corollary \ref{cor:genericboth} and equation \eqref{eq:constproj} 
to obtain
\begin{align*}
\frac{\dim \left(\pi^F_{[n_i+d]}(W_i) \wedge \expow^d V_{[n_i+d]} \right)}{\binom{n_{i}+d}{a_{i}+d}}
\geq \frac{\dim \left(\pi^F_{[n_i]}(W_i)\right)}{\binom{n_{i}}{a_{i}}}
 = \frac{\dim Z_{i}}{\binom{n_{i}}{a_{i}}}.
\end{align*}
\end{itemize}
Thus \eqref{eq:phidim} holds.
\end{proof}


\section{\textcolor{black}{An additional application \label{sec:addapp}}}

In a recent preprint~\cite{GKMNPTX}, Gerbner, Keszegh, Methuku, Abhishek, Nagy, Patk\'{o}s,
Tompkins, and Xiao consider bounding the size of fully cross-intersecting pairs of families of sets, with fixed profile $(a,b)$, under the additional assumption that one of the two families is also $t$-intersecting. For the $t=1$ case, they deduce an upper bound of $\frac{1}{2}\binom{a+b}{a}$ from the weighted skew Two Families Theorem in an earlier version of this paper, and conjecture  that the Erd\"{o}s-Ko-Rado bound of $\binom{a+b-1}{a-1}$ holds~\cite{GKMNPTX}*{Conjecture 2.4}. They also make a more general conjecture, proposing that the number of pairs in such a system is bounded by $AK(a+b,a,t)$~\cite{GKMNPTX}*{Conjecture 2.5}, where  $AK(n,a,t)$ denotes the maximum size of an $a$-uniform $t$-intersecting family $\hA \subseteq [n]$, as determined by Ahlswede and Khachatrian~\cite{AhKh}.

We will prove this conjecture. In fact, our Theorem~\ref{thm:GKsubspaces} below is more general in two ways: it applies to subspace configurations, and the cross-intersecting condition is relaxed to skew. A set system version, Corollary~\ref{cor:GKMNPTXconj}, follows immediately via the standard construction. 

\begin{theorem}\label{thm:GKsubspaces}
Fix positive integers $t \leq a \leq b$. Let $(A_1,B_1),\dots,(A_m,B_m)$ 
be a collection of pairs of subspaces of a real vector space $V$ with $\dim A_i=a$, $\dim B_i=b$ for  $1\leq i\leq m$.
Suppose that
\begin{enumerate}[label=\textup{(\roman*)}]
\item $\dim (A_i \cap B_i) = 0$ for $1 \leq i \leq m$, 
\item $\dim (A_i \cap B_j) >0$ for $1 \leq i < j \leq m$, and
\item $\dim (A_i \cap A_j) \geq t$ for $1 \leq i,j \leq m$. 
\end{enumerate}
Then $m \leq AK(a+b,a,t)$. 
\end{theorem}
\begin{proof} 

First, we note that we may without loss of generality assume $\dim V = n= a+b$. Why? If $\dim V =N > a+b$, we can apply Lemma~\ref{lem:nocollapse} to the space $V$ and the list of subspaces containing  $A_i$, $B_i$, $A_i \cap B_j$, and $A_i \cap A_j$, for all $1 \leq i,j \leq m$.  The result is a basis $F = \{f_1,\dots, f_{N}\}$ for $V$ such that the pairs of subspaces $\left\{\left(\pi^F_{[n]}(A_i),\pi^F_{[n]}(B_i)\right)\,:\, 1 \leq i \leq m\right\}$ of $V_{[n]}=\myspan\{f_1,\dots,f_n\}$ satisfy all the hypotheses of the theorem. In this case we simply replace $V$ by $V_{[n]}$ and replace each $(A_i,B_i)$ by $\left(\pi^F_{[n]}(A_i),\pi^F_{[n]}(B_i)\right)$.

Set $W = \myspan\{v_{A_i}\,:\, 1 \leq i \leq m \}$,
where the the $a$-blade $v_{A_i} \in \expow^a V$ is defined by \eqref{eq:defblade}. Hypotheses (i) and (ii) ensure that the usual exterior algebra argument for Theorem~\ref{thm:skewspaces} goes through, so the $a$-blades $\{v_{A_i}\,:\,1 \leq i \leq m\}$ are linearly independent and $\dim W =m$.

\textcolor{more}{It is also true that $W$ is a $t$-self-annihilating subspace of $\expow^a V$. 
Why? First, hypothesis (iii) and Lemma~\ref{lem:tselfann} ensure that 
\[
(v_{A_i} \innprod y^*) \wedge (v_{A_j} \innprod y^*) = 0
\]
for every decomposable $y^* \in \expow^{<t} V^*$. Given arbitrary $u, w \in W$, 
expand 
\[
u = \sum_{i} \alpha_i v_{A_i}, \qquad v = \sum_{j} \beta_j v_{A_j}.
\]
Then by bilinearity,
\begin{align*}
(u \innprod y^*) \wedge (v \innprod y^*) & = \sum_{i} \sum_{j}
\alpha_i \beta_j\, (v_{A_i} \innprod y^*) \wedge (v_{A_j} \innprod y^*),
\end{align*}
so we have verified the full definition~\eqref{eq:tselfanndef}.}

The desired result now follows from Theorem~\ref{thm:AKspaces}. 
\end{proof}

\begin{corollary}\label{cor:GKMNPTXconj} 
Fix positive integers $t \leq a \leq b$. Let $(A_1,B_1),\dots,(A_m,B_m)$ 
be a collection of pairs of sets with $|A_i|=a$, $|B_i|=b$ for  $1\leq i\leq m$.
Suppose that
\begin{enumerate}[label=\textup{(\roman*)}]
\item $A_i \cap B_i = \emptyset$ for $1 \leq i \leq m$, 
\item $A_i \cap B_j \not= \emptyset$ for $1 \leq i < j \leq m$, and
\item $|A_i \cap A_j| \geq t$ for $1 \leq i,j \leq m$. 
\end{enumerate}
Then $m \leq AK(a+b,a,t)$. 
\end{corollary}
\begin{proof} Assume without loss of generality that $A_i, B_i \subset [N]$ for some $N \in \N$, and let $E = \{e_1,\dots, e_N\}$ be the standard basis of $\R^N$. Let 
\[
U_i = \myspan\{e_k \,:\, k \in A_i\} \textrm{  and  } 
W_i = \myspan\{e_k \,:\, k \in B_i\}.
\]
Then hypotheses (i), (ii), and (iii) for the set pairs $\{(A_i,B_i)\,:\, i \in [m]\}$  imply hypotheses (i), (ii), and (iii), respectively, of Theorem~\ref{thm:GKsubspaces} for the   
subspace pairs $\{(U_i,W_i)\,:\, i \in [m]\}$.
\end{proof}

We note that in a recent preprint, Yu, Kong, Xi, Zhang, and Ge~\cite{YKXZG} have independently proved the $t=1$ case of Theorem~\ref{cor:GKMNPTXconj}, which is Gerbner et al's Conjecture 2.4~\cite{GKMNPTX}. They also proceed via a subspace generalization. Their argument uses F\"{u}redi's threshold version of the Two Families Theorem~\cite{Furedi} and the characterization of self-annihilating subspaces (Theorem~\ref{prop:extEKRspaces}) given in the first preprint version of this paper.

%
%
%

\section{\textcolor{black}{Limiting Examples and Questions}}\label{sec:badexamples}
\textcolor{black}{Are our new Two Families theorems optimal?}  It is not clear that we can hope to further relax condition~\ref{it:order} of Theorem~\ref{thm:subspaces} and Corollary~\ref{cor:combo}, which requires that
\[
a_1 \leq a_2 \leq \dots \leq a_m \textrm{ and } b_1 \geq b_2 \geq \dots \geq b_m.
\]
Examples~\ref{ex:death} and~\ref{ex:log} both violate condition~\ref{it:order} for many values of $i$, and both examples satisfy
\[
\sum_{i=1}^m \frac{1}{\binom{n_i}{a_i}} = \Omega(\log m).
\]

However, there are examples achieving a weighted sum greater than 1 that violate condition~\ref{it:order} for just one value of $i$. 
\begin{example}

For $a,b,c >0$, set $n=a+b$.  Build a pair of families by first listing all  profile-$(a,b)$ complementary pairs of subsets of $[n].$ Choose $S \in \sbsts{n}{b+1}$. Any such $S$ intersects non-trivially with each $A_i$ so far. Now append a pair $(A^*,B^*)$ to the list, where $|A^*|=a$, $|B^*|=b+c$, and $S \subseteq B^*$ (the elements of $A^*$ and $B^*$ can otherwise be chosen arbitrarily). The weighted sum is 
\[
\frac{\binom{n}{a}}{\binom{n}{a}} + \frac{1}{\binom{n+c}{a}}>1. 
\] 

\end{example}

\begin{example} Fix $a,b,c,d > 0$ and $a>c$. Let $n=a+b$. Build a system by first listing all  profile-$(a,b)$ complementary pairs of subsets of $[n],$ then all  profile-$(a-c,b+c+d)$ complementary pairs of subsets of $[n+d]$. This pair of families is skew cross-intersecting; note that when $d=0$ it is two ``levels'' of Example~\ref{ex:death}. However, the weighted sum is
\[
\frac{\binom{n}{a}}{\binom{n}{a}} + \frac{\binom{n+d}{a-c}}{\binom{n+d}{a-c}} =2.
\]
\end{example}

\medskip

What about other relaxations of the cross-intersecting condition? For example, would it be enough to require the full cross-intersecting condition for pairs with distinct profiles, but only skew for pairs with the same profile? 

\begin{conjecture}
 Let $(A_1,B_1),\dots,(A_m,B_m)$ 
be pairs of finite non-empty subsets of $\N$. Write $a_i=|A_i|$ and $b_i=|B_i|$ for $1\leq i\leq m$.  
Suppose that
\begin{enumerate}[label=\textup{(\roman*)}]
\item $A_i \cap B_i = \emptyset$ for $1 \leq i \leq m$, 
\item $A_i \cap B_j \not= \emptyset$ for $1 \leq i < j \leq m$, and
\item $A_i \cap B_j \not= \emptyset$ if $|A_i|\ne |A_j|$ or $|B_i|\ne |B_j|$.
\end{enumerate}
Then 
\begin{equation}\label{eq:all2}
\sum_{i=1}^m \frac{1}{\binom{a_i+b_i}{a_i}} \leq 1.
\end{equation}
\end{conjecture}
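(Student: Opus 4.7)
The plan is to translate the conjecture to the subspace setting and attempt to extend the inductive proof of Theorem~\ref{thm:subspaces}. As in Corollary~\ref{cor:combo}, a subspace-level bound of the form $\sum 1/\binom{a_i+b_i}{a_i} \leq 1$ will imply the set-system statement via the standard embedding $A \mapsto \myspan\{e_a : a \in A\}$, so I would work throughout with pairs $(A_i,B_i)$ of subspaces satisfying the three hypotheses. The key exterior-algebra feature is that $(A_i, B_i)$ contributes the blade $v_{A_i} \in \expow^{a_i} V$, and hypotheses~(i)--(ii) translate directly into non-vanishing or vanishing wedge products of these blades.

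Next I would group the pairs into blocks by profile, writing $P_j = (\alpha_j, \beta_j)$ for the distinct profiles and $m_j$ for the number of pairs of profile $P_j$, so that the conjecture becomes $\sum_j m_j / \binom{\alpha_j + \beta_j}{\alpha_j} \leq 1$. Hypothesis~(iii) supplies the symmetric relation $v_{A_h} \wedge v_{B_l} = 0$ whenever $h$ and $l$ belong to different blocks, while within a single block only the skew (ordered) relation is assumed; in particular the blocks can be reordered arbitrarily without destroying intersection data. If, after some such reordering, the sequence of profiles can be arranged with $\alpha_j$ non-decreasing and $\beta_j$ non-increasing --- that is, if the set of distinct profiles forms a chain under the product partial order in which $\alpha$ increases and $\beta$ decreases --- then Theorem~\ref{thm:subspaces} applies verbatim to the reindexed list and the bound follows.

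The main obstacle is the presence of incomparable profiles, i.e.\ blocks $j, j'$ with $\alpha_j < \alpha_{j'}$ and $\beta_j < \beta_{j'}$: no monotone reordering exists, and the inductive construction of the spaces $W_i \subseteq \expow^{a_i} V$ in the proof of Theorem~\ref{thm:subspaces} would have to accommodate transitions in which the exterior degree decreases or the $b$-component increases. The dimensional-fraction bounds of Section~\ref{sec:projections} --- generic projection (Corollary~\ref{cor:genericproj}) and wedge-lift (Corollary~\ref{cor:genericboth}) --- are asymmetric: both preserve dimensional fraction only when the exterior degree is non-decreasing and the ambient dimension is non-increasing. The hard part will therefore be to establish a new exterior-algebra inequality suited to reverse profile transitions, or else to abandon sequential processing entirely, perhaps via a simultaneous construction that assembles all profile blocks into one subspace of a fixed $\expow^{\alpha^*} V$ and reads the weighted sum off from its dimension (using hypothesis~(iii) to force the contributions from different blocks to be linearly independent). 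A naive attempt --- factoring an incomparable transition through the coordinatewise minimum $(\min(\alpha_j,\alpha_{j'}), \min(\beta_j,\beta_{j'}))$ by projecting down and then wedging back up --- loses a strict factor and does not obviously recover the needed bound from hypothesis~(iii) alone, so genuinely new input seems required.
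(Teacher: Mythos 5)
This statement is posed in the paper as an open conjecture; the paper gives no proof of it, and your proposal does not supply one either. What you actually establish is only the easy special case: when the distinct profiles $(\alpha_j,\beta_j)$ happen to form a chain (reorderable so that the $\alpha_j$ are non-decreasing and the $\beta_j$ non-increasing), hypothesis (iii) lets you permute the blocks and invoke Theorem~\ref{thm:subspaces} verbatim. The substance of the conjecture is precisely the remaining case of incomparable profiles ($\alpha_j<\alpha_{j'}$ and $\beta_j<\beta_{j'}$), and there, as you yourself concede, your argument stops: the machinery of Section~\ref{sec:projections} (Theorem~\ref{prop:extLYM}, Corollaries~\ref{cor:genericproj} and~\ref{cor:genericboth}) only moves dimensional fractions in one direction --- exterior degree up, ambient dimension down --- so the inductive construction of the $W_i$ cannot be run through a transition in which $a$ decreases or $b$ increases. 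The alternative you float, a ``simultaneous construction'' assembling all blocks inside a single $\expow^{\alpha^*}V$ with hypothesis (iii) forcing independence across blocks, is named but not carried out; no mechanism is given for why the contributions would be independent or how the mixed weights $1/\binom{a_i+b_i}{a_i}$ would be read off a single dimension count. So the proposal is an honest reduction-plus-obstruction analysis, not a proof, and the gap is the entire content of the conjecture beyond Theorem~\ref{thm:subspaces}.

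Two smaller points. First, your chain-case observation is essentially already in the paper: the remark after Corollary~\ref{cor:weightedspaces} notes that the proof goes through when pairs are fully cross-intersecting between distinct profiles and only skew within a profile --- but that route pads every $B_i$ up to dimension $b=\max_i b_i$ and therefore yields the weaker weights $1/\binom{a_i+b}{a_i}$, not the exact weights in \eqref{eq:all2}; your plan as written does not improve on this. Second, be careful with the claim that hypothesis (iii) translates into $v_{A_h}\wedge v_{B_l}=0$ for $h,l$ in different blocks ``symmetrically'': that is true after the standard embedding, but it does not by itself order the blocks compatibly with the skew relations inside each block when profiles are incomparable, which is exactly where the difficulty lives.
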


Finally, we note that several other directions of generalization have been studied.  For example, Tuza~\cite{Tuza87} further weakened the skew condition~\ref{it:skewintersect} to require only that at least one of $A_i\cap B_j$ and $A_j \cap B_i$ be non-trivial for each $1 \leq i,j, \leq m$, $i\not=j$, a version considered further by {Kir\'{a}ly},
Nagy, {P\'{a}lv\"{o}lgyi}, and Visontai~\cite{KNPV}. F\"{u}redi~\cite{Furedi}, Talbot~\cite{Talbot}, and Kang, Kim, and Kim~\cite{KangKimKim},  considered \textcolor{black}{stronger intersection conditions}, while Einstein~\cite{Einstein} (corrected in Oum and Wee~\cite{OW}) and O'Neill and Verstraete~\cite{OV} look at more than two families of sets. Can any of these variations be further addressed with exterior algebra methods?



\begin{bibdiv}
\begin{biblist}

\bib{AhKh}{article}{
   author={Ahlswede, R.},
   author={Khachatrian, L. H.},
   title={\textcolor{black}{The complete intersection theorem for systems of finite sets}},
   journal={European J. Combin.},
   volume={18},
   date={1997},
   number={2},
   pages={125--136},
}

\bib{AK05}{article}{
   author={Ahlswede, R.},
   author={Khachatrian, L. H.},
   title={\textcolor{more}{Katona's intersection theorem: four proofs}},
   journal={Combinatorica},
   volume={25},
   date={2005},
   number={1},
   pages={105--110},
}

\bib{Alon85}{article}{
   author={Alon, N.},
   title={An extremal problem for sets with applications to graph theory},
   journal={J. Combin. Theory Ser. A},
   volume={40},
   date={1985},
   number={1},
   pages={82--89},
}

\bib{AK85}{article}{
   author={Alon, N.},
   author={Kalai, G.},
   title={A simple proof of the upper bound theorem},
   journal={European J. Combin.},
   volume={6},
   date={1985},
   number={3},
   pages={211--214},
}

\bib{Anderson2002}{book}{
   author={Anderson, Ian},
   title={Combinatorics of finite sets},
   note={Corrected reprint of the 1989 edition},
   publisher={Dover Publications, Inc., Mineola, NY},
   date={2002},
   pages={xvi+250},
}	

\bib{AHH97}{article}{
   author={Aramova, Annetta},
   author={Herzog, J\"{u}rgen},
   author={Hibi, Takayuki},
   title={Gotzmann theorems for exterior algebras and combinatorics},
   journal={J. Algebra},
   volume={191},
   date={1997},
   number={1},
   pages={174--211},
}

\bib{BF92}{unpublished}{
	author = {Laszlo Babai and Peter Frankl},
	title = {Linear Algebra Methods in Combinatorics with Applications to Geometry and Computer Science},
	url={https://cs.uchicago.edu/page/linear-algebra-methods-combinatorics-applications-geometry-and-computer-science},
	date={1992},
}

\bib{Blokhuis89}{article}{
   author={Blokhuis, A.},
   title={Solution of an extremal problem for sets using resultants of
   polynomials},
   journal={Combinatorica},
   volume={10},
   date={1990},
   number={4},
   pages={393--396},
}

\bib{Bollobas65}{article}{
   author={Bollob{\'a}s, B.},
   title={On generalized graphs},
   language={English, with Russian summary},
   journal={Acta Math. Acad. Sci. Hungar},
   volume={16},
   date={1965},
   pages={447--452},
}

\bib{WhiteBook}{book}{
   author={Bollob{\'a}s, B.},
   title={Combinatorics: Set Systems, Hypergraphs, Families of Vectors, and Combinatorial Probability},
   language={English},
   publisher={Cambridge University Press},
   address={Cambridge},
   date={1986},
}

\bib{Bourbaki}{book}{
   author={Bourbaki, N.},
   title={\'{E}l\'{e}ments de math\'{e}matique. VII. Premi\`ere partie: Les structures
   fondamentales de l'analyse. Livre II: Alg\`ebre. Chapitre III: Alg\`ebre
   multilin\'{e}aire},
   language={French},
   series={Actualit\'{e}s Sci. Ind., no. 1044},
   publisher={Hermann et Cie., Paris},
   date={1948},
}

\bib{Bourbaki89}{book}{
   author={Bourbaki, Nicolas},
   title={\textcolor{black}{Algebra. I. Chapters 1--3}},
   series={Elements of Mathematics (Berlin)},
   note={Translated from the French;
   Reprint of the 1974 edition},
   publisher={Springer-Verlag, Berlin},
   date={1989},
}

\bib{Einstein}{article}{
   author={Einstein, Ori},
   title={Properties of intersecting families of ordered sets},
   journal={Combinatorica},
   volume={28},
   date={2008},
   number={1},
   pages={37--44},
}

\bib{EKR}{article} {
   author={Erd{\H{o}}s, P.},
   author={Ko, C.},
   author={Rado, R.},
     TITLE = {Intersection theorems for systems of finite sets},
   JOURNAL = {Quart. J. Math. Oxford Ser. (2)},
    VOLUME = {12},
      YEAR = {1961},
     PAGES = {313--320},
}

\bib{F82}{article} {
    AUTHOR = {Frankl, P.},
     TITLE = {An extremal problem for two families of sets},
   JOURNAL = {European J. Combin.},
    VOLUME = {3},
      YEAR = {1982},
    NUMBER = {2},
     PAGES = {125--127},
}

\bib{FT}{book}{
   author={Frankl, Peter},
   author={Tokushige, Norihide},
   title={Extremal problems for finite sets},
   series={Student Mathematical Library},
   volume={86},
   publisher={American Mathematical Society, Providence, RI},
   date={2018},
}

\bib{FultonHarris}{book}{
   author={Fulton, William},
   author={Harris, Joe},
   title={\textcolor{black}{Representation theory}},
   series={Graduate Texts in Mathematics},
   volume={129},
   publisher={Springer-Verlag, New York},
   date={1991},
}

\bib{Furedi}{article}{
   author={F\"{u}redi, Z.},
   title={Geometrical solution of an intersection problem for two
   hypergraphs},
   journal={European J. Combin.},
   volume={5},
   date={1984},
   number={2},
   pages={133--136},
}

\bib{Furedi88}{article}{
   author={F\"{u}redi, Zolt\'{a}n},
   title={\textcolor{black}{Matchings and covers in hypergraphs}},
   journal={Graphs Combin.},
   volume={4},
   date={1988},
   number={2},
   pages={115--206},
}

\bib{GKMNPTX}{article}{
 author={Gerbner, D\'{a}niel},
 author={Keszegh, Bal\'{a}zs},
  author={Methuku, Abhishek},
   author={Nagy, D\'{a}niel T.},
   author={Patk\'{o}s, Bal\'{a}zs},
    author={Tompkins, Casey},
     author={Xiao, Chuanqi},
title={\textcolor{black}{Set systems related to a house allocation problem}},
eprint={arXiv:1910.04666 [math.CO]}
}

\bib{GP}{book}{
   author={Gerbner, D\'{a}niel},
   author={Patk\'{o}s, Bal\'{a}zs},
   title={Extremal finite set theory},
   series={Discrete Mathematics and its Applications (Boca Raton)},
   publisher={CRC Press, Boca Raton, FL},
   date={2019},
}

\bib{GM16}{book}{
   author={Godsil, Chris},
   author={Meagher, Karen},
   title={Erd\H{o}s-Ko-Rado theorems: algebraic approaches},
   series={Cambridge Studies in Advanced Mathematics},
   volume={149},
   publisher={Cambridge University Press, Cambridge},
   date={2016},
   pages={xvi+335},
}

\bib{GST84}{article}{
   author={Griggs, Jerrold R.},
   author={Stahl, J\"{u}rgen},
   author={Trotter, William T., Jr.},
   title={A Sperner theorem on unrelated chains of subsets},
   journal={J. Combin. Theory Ser. A},
   volume={36},
   date={1984},
   number={1},
   pages={124--127},
}

\bib{HH11}{book}{
   author={Herzog, J\"{u}rgen},
   author={Hibi, Takayuki},
   title={Monomial ideals},
   series={Graduate Texts in Mathematics},
   volume={260},
   publisher={Springer-Verlag London, Ltd., London},
   date={2011},
}

\bib{Huang}{article}{
author={Huang, Hao},
title={\textcolor{black}{Induced subgraphs of hypercubes and a proof of the Sensitivity Conjecture}},
eprint={arXiv:1907.00847 [math.CO]},
}

\bib{JP71}{article}{
   author={Jaeger, Fran\c{c}ois},
   author={Payan, Charles},
   title={Nombre maximal d'ar\^{e}tes d'un hypergraphe $\tau $-critique de rang
   $h$},
   language={French},
   journal={C. R. Acad. Sci. Paris S\'{e}r. A-B},
   volume={273},
   date={1971},
   pages={A221--A223},
}

\bib{Jukna2011}{book}{
   author={Jukna, Stasys},
   title={Extremal combinatorics},
   series={Texts in Theoretical Computer Science. An EATCS Series},
   edition={2},
   publisher={Springer, Heidelberg},
   date={2011},
}

\bib{K84}{article}{
    AUTHOR = {Kalai, Gil},
     TITLE = {Weakly saturated graphs are rigid},
 BOOKTITLE = {Convexity and graph theory ({J}erusalem, 1981)},
    SERIES = {North-Holland Math. Stud.},
    VOLUME = {87},
     PAGES = {189--190},
 PUBLISHER = {North-Holland},
   ADDRESS = {Amsterdam},
      YEAR = {1984},
}

\bib{Kalai84}{article}{
   author={Kalai, Gil},
   title={Intersection patterns of convex sets},
   journal={Israel J. Math.},
   volume={48},
   date={1984},
   number={2-3},
   pages={161--174},
}

\bib{Kalai84a}{article}{
   author={Kalai, Gil},
   title={Characterization of $f$-vectors of families of convex sets in
   ${\bf R}^d$. I. Necessity of Eckhoff's conditions},
   journal={Israel J. Math.},
   volume={48},
   date={1984},
   number={2-3},
   pages={175--195},
}

\bib{Kalai85}{article}{
   author={Kalai, Gil},
   title={\textcolor{black}{Hyperconnectivity of graphs}},
   journal={Graphs Combin.},
   volume={1},
   date={1985},
   number={1},
   pages={65--79},
}

\bib{Kalai86}{article}{
   author={Kalai, Gil},
   title={Characterization of $f$-vectors of families of convex sets in
   ${\bf R}^d$. II. Sufficiency of Eckhoff's conditions},
   journal={J. Combin. Theory Ser. A},
   volume={41},
   date={1986},
   number={2},
   pages={167--188},
}

\bib{Kalai90}{article}{
   author={Kalai, Gil},
   title={\textcolor{black}{Symmetric matroids}},
   journal={J. Combin. Theory Ser. B},
   volume={50},
   date={1990},
   number={1},
   pages={54--64},
}

\bib{KalaiAlgShift}{article}{
   author={Kalai, Gil},
   title={Algebraic shifting},
   conference={
      title={Computational commutative algebra and combinatorics},
      address={Osaka},
      date={1999},
   },
   book={
      series={Adv. Stud. Pure Math.},
      volume={33},
      publisher={Math. Soc. Japan, Tokyo},
   },
   date={2002},
   pages={121--163},
}

\bib{KalaiBlog}{article}{
author={Kalai, Gil},
title={Lov\'{a}sz's Two Families Theorem},
eprint = {gilkalai.wordpress.com/2008/12/25/lovaszs-two-families-theorem/},
date={December 25, 2008},
note = {Accessed April 16, 2019},
}

\bib{KangKimKim}{article}{
   author={Kang, Dong Yeap},
   author={Kim, Jaehoon},
   author={Kim, Younjin},
   title={On the Erd\H{o}s-Ko-Rado theorem and the Bollob\'{a}s theorem for
   $t$-intersecting families},
   journal={European J. Combin.},
   volume={47},
   date={2015},
   pages={68--74},
}

\bib{Karasev}{article}{
author={Karasev, Roman},
title={\textcolor{black}{Huang's theorem and the exterior algebra}},
eprint={arXiv:1907.11175 [math.CO]}
}

\bib{Katona}{article}{
   author={Katona, Gy.},
   title={A theorem of finite sets},
   conference={
      title={Theory of graphs},
      address={Proc. Colloq., Tihany},
      date={1966},
   },
   book={
      publisher={Academic Press, New York},
   },
   date={1968},
   pages={187--207},
}

\bib{Katona64}{article}{
   author={Katona, Gy.},
   title={\textcolor{more}{Intersection theorems for systems of finite sets}},
   journal={Acta Math. Acad. Sci. Hungar.},
   volume={15},
   date={1964},
   pages={329--337},
}

\bib{Katona74}{article}{
   author={Katona, Gy.},
   title={Solution of a problem of A. Ehrenfeucht and J. Mycielski},
   journal={J. Combinatorial Theory Ser. A},
   volume={17},
   date={1974},
   pages={265--266},
}

\bib{KNPV}{article}{
   author={Kir\'{a}ly, Zolt\'{a}n},
   author={Nagy, Zolt\'{a}n L.},
   author={P\'{a}lv\"{o}lgyi, D\"{o}m\"{o}t\"{o}r},
   author={Visontai, Mirk\'{o}},
   title={On families of weakly cross-intersecting set-pairs},
   journal={Fund. Inform.},
   volume={117},
   date={2012},
   number={1-4},
   pages={189--198},
}

\bib{Kleitman}{article}{
   author={Kleitman, D. J.},
   title={On an extremal property of antichains in partial orders. The ${\rm
   LYM}$ property and some of its implications and applications},
   conference={
      title={Combinatorics},
      address={Proc. NATO Advanced Study Inst., Breukelen},
      date={1974},
   },
   book={
      publisher={Math. Centrum, Amsterdam},
   },
   date={1974},
   pages={77--90. Math. Centre Tracts, No. 56},
}

\bib{Kruskal}{article}{
   author={Kruskal, Joseph B.},
   title={The number of simplices in a complex},
   conference={
      title={Mathematical optimization techniques},
   },
   book={
      publisher={Univ. of California Press, Berkeley, Calif.},
   },
   date={1963},
   pages={251--278},
}

\bib{Lovasz77}{article}{
    AUTHOR = {Lov{\'a}sz, L.},
     TITLE = {Flats in matroids and geometric graphs},
 BOOKTITLE = {Combinatorial surveys ({P}roc. {S}ixth {B}ritish
              {C}ombinatorial {C}onf., {R}oyal {H}olloway {C}oll., {E}gham,
              1977)},
     PAGES = {45--86},
 PUBLISHER = {Academic Press},
   ADDRESS = {London},
      YEAR = {1977},
}

\bib{Lubell}{article}{
   author={Lubell, D.},
   title={A short proof of Sperner's Lemma},
   journal={J. Combinatorial Theory},
   volume={1},
   date={1966},
   pages={299},
}

\bib{Matousek}{book}{
   author={Matou\v{s}ek, Ji\v{r}\'{i}},
   title={Thirty-three miniatures},
   series={Student Mathematical Library},
   volume={53},
   publisher={American Mathematical Society, Providence, RI},
   date={2010},
}

\bib{MT89}{article}{
    author = {Matsumoto, Makoto},
    author={Tokushige, Norihide},
     TITLE = {The exact bound in the {E}rd\H{o}s-{K}o-{R}ado theorem for
              cross-intersecting families},
   JOURNAL = {J. Combin. Theory Ser. A},
    VOLUME = {52},
      YEAR = {1989},
     PAGES = {90--97},
      ISSN = {0097-3165}
}

\bib{Meshalkin}{article}{
   author={Me\v{s}alkin, L. D.},
   title={A generalization of Sperner's theorem on the number of subsets of
   a finite set},
   language={Russian, with German summary},
   journal={Teor. Verojatnost. i Primenen},
   volume={8},
   date={1963},
   pages={219--220},
}

\bib{OV}{article}{
author={O'Neill, Jason},
author={Verstraete, Jacques},
title={Bollob\'{a}s-type inequalities on set $k$-tuples},
eprint={arXiv:1812.00537 [math.CO]}
}

\bib{OW}{article}{
   author={Oum, Sang-il},
   author={Wee, Sounggun},
   title={A remark on the paper ``Properties of intersecting families of
   ordered sets'' by O. Einstein},
   journal={Combinatorica},
   volume={38},
   date={2018},
   number={5},
   pages={1279--1284},
}

\bib{Pikhurko}{article}{
   author={Pikhurko, Oleg},
   title={\textcolor{black}{Weakly saturated hypergraphs and exterior algebra}},
   journal={Combin. Probab. Comput.},
   volume={10},
   date={2001},
   number={5},
   pages={435--451},
   issn={0963-5483},
   review={\MR{1869050}},
   doi={10.1017/S0963548301004746},
}

\bib{P86}{article}{
    AUTHOR = {Pyber, L.},
     TITLE = {A new generalization of the {E}rd\H{o}s-{K}o-{R}ado theorem},
   JOURNAL = {J. Combin. Theory Ser. A},
    VOLUME = {43},
      YEAR = {1986},
     PAGES = {85--90},
}

\bib{Schutz}{article}{
	author={Sch\"utzenberger, M.P.},
	title={An inequality of E.F. Moore and C.E. Shannon},
	journal={Massachusetts Institute of Technology, Research Laboratory for Electronics, Quarterly Progress Report},
	volume={55},
	date={October 15, 1959},
	pages={114-116},
}

\bib{Sp28}{article}{
    AUTHOR = {Sperner, Emanuel},
     TITLE = {Ein {S}atz \"{u}ber {U}ntermengen einer endlichen {M}enge},
   JOURNAL = {Math. Z.},
    VOLUME = {27},
      YEAR = {1928},
     PAGES = {544--548},
      ISSN = {0025-5874},
}

\bib{Talbot}{article}{
   author={Talbot, John},
   title={A new Bollob\'{a}s-type inequality and applications to
   $t$-intersecting families of sets},
   journal={Discrete Math.},
   volume={285},
   date={2004},
   number={1-3},
   pages={349--353},
}

\bib{Tarjan75}{article}{
   author={Tarj\'{a}n, T. G.},
   title={Complexity of lattice-configurations},
   journal={Studia Sci. Math. Hungar.},
   volume={10},
   date={1975},
   number={1--2},
   pages={203--211},
}

\bib{Tuza87}{article}{
   author={Tuza, Zs.},
   title={Inequalities for two-set systems with prescribed intersections},
   journal={Graphs Combin.},
   volume={3},
   date={1987},
   number={1},
   pages={75--80},
}

\bib{TuzaSurveyI}{article}{
   author={Tuza, Zs.},
   title={Applications of the set-pair method in extremal hypergraph theory},
   conference={
      title={Extremal problems for finite sets},
      address={Visegr\'{a}d},
      date={1991},
   },
   book={
      series={Bolyai Soc. Math. Stud.},
      volume={3},
      publisher={J\'{a}nos Bolyai Math. Soc., Budapest},
   },
   date={1994},
   pages={479--514},
}

\bib{TuzaSurveyII}{article}{
   author={Tuza, Zs.},
   title={Applications of the set-pair method in extremal problems. II},
   conference={
      title={Combinatorics, Paul Erd\H{o}s is eighty, Vol. 2},
      address={Keszthely},
      date={1993},
   },
   book={
      series={Bolyai Soc. Math. Stud.},
      volume={2},
      publisher={J\'{a}nos Bolyai Math. Soc., Budapest},
   },
}

\bib{West}{article}{
   author={West, Douglas B.},
   title={Extremal problems in partially ordered sets},
   conference={
      title={Ordered sets},
      address={Banff, Alta.},
      date={1981},
   },
   book={
      series={NATO Adv. Study Inst. Ser. C: Math. Phys. Sci.},
      volume={83},
      publisher={Reidel, Dordrecht-Boston, Mass.},
   },
   date={1982},
   pages={473--521},
}

\bib{Woodroofe}{article}{
author={Woodroofe, Russ},
title={An algebraic groups perspective on Erd\H{o}s-Ko-Rado},
eprint={arXiv:2007.03707 [math.CO]}
}

\bib{Yamamoto}{article}{
   author={Yamamoto, Koichi},
   title={Logarithmic order of free distributive lattice},
   journal={J. Math. Soc. Japan},
   volume={6},
   date={1954},
   pages={343--353},
}

\bib{YKXZG}{article}{
author={Yu, Wenjun},
author={Kong, Xiangliang},
author={Xi, Yuanxiao},
author={Zhang, Xiande},
author={Ge, Gennian},
title={\textcolor{black}{Bollob\'{a}s-type theorems for hemi-bundled families}},
eprint={arXiv:1911.07011 [math.CO]}
}
\end{biblist}
\end{bibdiv}
\end{document}